\documentclass[11pt,a4]{article}
\usepackage[colorlinks=true, pdfstartview=FitV, linkcolor=blue, citecolor=magenta,
urlcolor=blue]{hyperref}
\usepackage{color}
\usepackage{shuffle}
\usepackage[applemac]{inputenc}  
\usepackage{amssymb,amsmath,amsthm,amsfonts,amscd,amstext,stmaryrd,mathabx}
\usepackage{array}
\usepackage{graphicx}
\usepackage{xypic}
\newtheorem{lem}{Lemma}[section]
\newtheorem{prop}[lem]{Proposition}
\newtheorem{thm}[lem]{Theorem}

\newtheorem{df}{Definition}[section]

\newtheorem{rem}[lem]{Remark}

\textwidth = 5.9 in
\textheight = 9 in
\oddsidemargin = 0.0 in
\evensidemargin = 0.0 in
\topmargin = 0.0 in
\headheight = 0.0 in
\headsep = 0.0 in
\parskip = 0.1in
\parindent = 0.0in

\def\C{{\mathbb C}}  
\def\deg  {{\rm deg}}
\def\Hom{\operatorname{Hom}} 
\def\id{\operatorname{id}}  
\def\ker{\operatorname{ker}}  
\def\spec{\operatorname{Spec}}

\def\Q{{\mathbb Q}}   
\def\R{{\mathbb R}}    
\def\N{{\mathbb N}}    
\title{\bf Homogeneous Lie Groups and Quantum Probability}
\author{Roland Friedrich and John McKay}
\date{}
\begin{document}
\maketitle
\begin{abstract}
Here we extend the algebro-geometric approach to free probability, started in~\cite{FMcK4,F14}, to general (non)-com\-mu\-ta\-tive probability theories.  We show that any universal convolution product of moments of independent (non)-commutative random variables defined on a graded connected dual semi-group is given by a pro-unipotent group scheme. We show that moment-cumulant formul{\ae} have a natural interpretation within the theory of homogeneous Lie groups, which we generalise for the present purpose, and are given by the $\log$ and $\exp$ map, respectively. Finally, we briefly discuss the universal role of the shuffle Hopf algebra.
\end{abstract}
\section{Introduction}
D.-V. Voiculescu~\cite{V85,V87b,VDN}  in the 1980s, when he solved the one-dimensional addition and multiplication problem for free random variables introduced the $R$- and $S$-transform, based on Lie theoretic methods, in order to linearise the problems. He showed that in each case the space of distributions becomes an infinite-dimensional complex Lie group and~\cite{V85} that the $R$-transform for the additive convolution turns $\C^n$ into a commutative algebraic group, given by homogeneous polynomials. However, these aspects of his work and understanding stayed rather in the background for a long time in free probability.

Recently methods from Lie theory outside of his work have become important. M. Mastnak and A. Nica~\cite{MN} in their pioneering study of Voiculescu's $S$-transform, used Hopf algebraic techniques and considered infinitesimal characters in order to linearise the boxed convolution, which they found possible in the one-dimensional case.

K. Ebrahimi-Fard and F. Patras~\cite{E-FP2015} in their work on the combinatorics of free cumulants became also aware of the relation moments and cumulants have with Lie groups and Lie algebras. 

Previously, F. Lehner~\cite{L} described moment-cumulant formulae in detail but only discovered presently~\cite{L2015}, while doing explicit calculations, that the Baker-Campbell-Hausdorff  (BCH) series appear in the formul{\ae} relating moments and cumulants.

Again, independently, M. Schürmann~\cite{Sch2015} communicated that he has found that the (BCH)-series appears in all probability theories and is finite.

Our approach to free probability theory was originally driven by several key observations, motivated by J. McKay's programmatic work on the Monster~\cite{McK12}, and started to evolve with~\cite{FMcK1,FMcK2}, where we established a link between Voiculescu's~\cite{V87b} $S$-transform and complex cobordism. This naturally involved Hopf algebras and formal group laws but also combinatorial models such as the necklace polynomials~\cite{FMcK2}. 

Subsequently, we showed~\cite{F12,FMcK4} that the language of affine group schemes and formal groups perfectly relates with the combinatorial foundations of free probability, as first discovered by R. Speicher~\cite{Spe97b} and then co-developed with A. Nica~\cite{NS}. In particular, we clarified several open questions by using arguments from Lie theory~\cite{FMcK4}. So, e.g., we determined the  ``cumulants of the free cumulants" in order to prove that the boxed convolution cannot be linearised in dimensions strictly greater than one. A particularly important result of~\cite{FMcK4} is that the convolution groups arising in free probability are pro-unipotent, i.e., have pro-nilpotent Lie algebras. We continued our investigations of this beautiful subject and showed~\cite{F14} that a generalised version of the theory of homogeneous Lie groups can be applied.

The theory of {\em real} homogeneous Lie groups was originally conceived by E. Stein~\cite{Ste} and then further developed and extended in the monograph by A. Bonfiglioli, E. Lanconelli and F. Uguzzoni~\cite{BLU}, which we use as our base reference on the subject. We show that its algebraic foundations are fully embeddable into the theory of formal groups over fields of characteristic zero. Curiously enough, we do not find this connection in the literature with formal groups as in, e.g.~\cite{Haz}.
On the other hand, this link shows the possible scheme-theoretic significance homogeneous and Carnot groups themselves might have. 

In the present article we extend our previous algebro-geometric approach to the theory of non-commutative probability with its different notions of independence, by building on the foundational algebraic-probabilistic work of M. Schürmann~\cite{Sch94}, A. Ben Ghorbal and M. Schürmann~\cite{GSch1,GSch2} and U. Franz~\cite{Fra}, who categorified it later.  A central role is played by co-groups~\cite{BH,Ber,EckH,V87a,Zha}, also called dual groups by D. Voiculescu~\cite{V87a} and $H$-algebras by J. Zhang~\cite{Zha}. 

Let us now give a brief description of the content of the paper. We start by introducing co-groups and free products and apply it to construct group representations for non-commutative algebras, parallel to the well known theory which involves commutative Hopf algebras. Then we discuss the fundamental notion of independence and universal products within the framework of quantum probability, also called non-commutative probability as it generalises classical probability. We then apply this theory to convolutions of moment series and establish several results. 
In particular we show how each probability theory gives rise to (smooth) formal groups and to commutative Hopf algebras which are not necessarily co-commutative.  

One remarkable result in this section is the ``decomposition formula" for the multiplicative free convolution of moments, Theorem~\ref{can-decomp}, which states that it is a linear superposition of the multiplicative boolean, monotone and anti-monotone convolutions plus mixed terms.  In particular, this demonstrates the rich structure free independence has.  

In the next section we introduce homogeneous formal groups and show how to embed the theory of real homogeneous Lie groups into the theory of formal groups. We establish a general theorem which states that any finite-dimensional real homogeneous Lie group defines a smooth $k$-group, where $k$ is an extension field of the rational numbers, obtained by adjoining at most finitely many real numbers. We close this section by introducing the general notion of a {Carnot $k$-group} which might be considered as the algebraic-geometric version of a Carnot group.   

In the final part of the article we give a unified discussion of all known moment-cumulant formul{\ae} from the perspective of homogeneous Lie groups. We do this by first recalling the notion of the boxed convolution and the $\mathcal{R}$-transform in free probability and show that the respective formul{\ae} correspond to co-ordinate changes, given by the Baker-Campbell-Hausdorff series, which in our case are polynomial as a consequence of the underlying pro-unipotent group structure.  One particular result is that cumulants are well-defined only up to a linear isomorphism given by an unipotent matrix, and taking ``cumulants of cumulants" requires a ``gauge choice" depending on the properties one wishes to preserve, e.g. such as probability measures.
We show this in detail for the free additive and multiplicative convolution. 

We end this article by briefly recalling the universal role the shuffle Hopf algebra plays within the theory of unipotent group schemes and its relation with free nilpotent Lie algebras. 

Let us make the following remarks. 
We do not use the Schürmann functor~[\cite{Sch94} p. 348] or~[\cite{GSch1} p. 539] in order to derive some of our statements for graded connected dual groups but prefer to work with the ``raw data" instead. Theorem~\ref{fund_add_thm} is related to it but with a different emphasis and phrasing. 

Even though our results have immediate implications for non-commutative probability, and could be applied to other objects, e.g. quantum stochastic processes, we shall not pursue it further here. 

Finally, the Lie groups we are considering here are affine. If one would instead consider topologically non-trivial Lie groups whose points are moments, the notion of cumulants would only be local. 

\subsection{Conventions}
Let $k$ be a field of characteristic zero. We denote by $\mathbf{Alg}_k$ the category of associative $k$-algebras, by $\mathbf{uAlg}_{k}$ the category of unital $k$-algebras and by $\mathbf{cAlg}_k$ the category of commutative unital $k$-algebras. For $R\in\mathbf{cAlg}_k$ we denote by $\mathbf{Alg}_R$, $\mathbf{uAlg}_R$ and $\mathbf{cAlg}_R$ the corresponding $R$-algebras. For $R$ an integral domain over $k$, we let $R^{\times}$ be the multiplicative group of units, i.e. invertible elements of $R$. We let $\N=\{0,1,2,3,\dots\}$ and $\N^*:=\N\setminus\{0\}$. 

Although most statements we present hold for algebras over an integral domain $R$ over  $\Q$, most of the time we shall restrict ourselves to $k$.

\section{Co-groups}
General references for this section are~\cite{GSch1,GSch2,BH,Ber,FM,Fra,Hun,Lac,Sch94,V87a,Zha}.
\label{sec:co_groups}

Let us start with the following motivation.
In~\cite{FMcK1,FMcK2} we found a relation between free probability and complex cobordism. Now let us consider 
CW complexes $X,Y$ for which the Künneth theorem states, cf. e.g.~\cite{Hun},  
$$
H_n(X\times Y;k)\cong\bigoplus_{i+j=n} H_i(X;k)\otimes H_j(Y;k)
$$
which at the level of generating series yields 
$$
p_{X\times Y}(t)=p_X(t) p_Y(t).
$$ 
For independent, classical, random variables $X,Y$ the moment formula states
$$
m_n(X+Y)=\bigoplus_{i+j=n} m_i(X)\otimes m_j(Y)
$$
which gives for the generating series
$$
p_{X+ Y}(t)=p_X(t) p_Y(t).
$$ 
The cartesian product $\times$ for topological spaces, plays the role of the sum $+$ for ``independent" random variables, and which in both cases permits to calculate the combined quantity out of the ``marginal distributions".

The common task is to reconstruct the complete quantity from the marginal knowledge. The underlying concept is a notion of independence with respect to an ``universal product".

The notion of a co-group (in a general category) appeared in 1962 in the work of B. Eckmann and P. Hilton~\cite{EckH}. It was followed by I. Berstein~\cite{Ber}, who used the term co-group. D. Voiculescu~\cite{V87a} introduced the notion of dual groups in his work on pro-$C^*$-algebras and free independence. Again motivated by algebraic topology, J. Zhang~\cite{Zha},  developed the theory of $H$-algebras in general categories. In particular his theory extends concepts from affine group schemes, which rely on commutative Hopf algebras, to non-commutative algebras, i.e.  the $H$-algebras. The book on the subject, with additional references, is~\cite{BH}.

The importance of $H$-algebras and dual groups for algebraic probability theory was realised by M. Schürmann~\cite{Sch94} and then further developed with A. Ben Ghorbal~\cite{GSch1,GSch2} in order to construct quantum Lévy processes on dual groups. 

\begin{df}
A category $\mathcal{C}$ which posseses an initial object $\mathbf{K}\in\operatorname{Obj}(\mathcal{C})$, and such that for any two objects $A,B\in\operatorname{Obj}(\mathcal{C})$ the co-product $A\amalg B\in\operatorname{Obj}(\mathcal{C})$ exists, is called {\bf algebraic}.
\end{df}
In an algebraic category, for all $A\in\mathcal {C}$, there exists a unique morphism $\mu_A:A\amalg A\rightarrow A$, the {\bf multiplication}, which satisfies $\mu_A\circ\iota_1=\id_A=\mu_A\circ\iota_2$, and a unique homomorphism $\eta_A:\mathbf{K}\rightarrow A$, called the {\bf unit}.

We have the canonical isomorphism $\iota_A:A\rightarrow A\amalg \mathbf{K} (\cong \mathbf{K}\amalg A)$ with inverse $\iota_A\amalg\eta:A\amalg \mathbf{K}\rightarrow A$, as shown below:
$$
\begin{xy}
  \xymatrix{
A\ar[r]^{\iota_1}\ar[dr]_{\id_A}  & A\amalg \mathbf{K}\ar[d]_{\id_1\amalg\eta}     & \mathbf{K}\ar[dl]^{\eta}\ar[l]_{\iota_2=\eta}\\
   & A&\\
 }
\end{xy}
$$
Let $A_i, B_i\in\operatorname{Obj}(\mathcal{C})$, $i=1,2$. For $\mathcal{C}$-morphisms $f:A_1\rightarrow A_2$ and $g:B_1\rightarrow B_2$ we define
\begin{equation*}
\label{weak_amalg}
f\sqcup g:=(\iota_{A_2}\circ f)\amalg(\iota_{B_2}\circ g): A_1\amalg A_2\rightarrow B_1\amalg B_2
\end{equation*}
with $\iota_{A_2}$ and $\iota_{B_2}$ the respective inclusion morphisms. 
For $f_1,f_2\in\Hom_{\mathcal{C}}(A,B)$ we have, by universality,
\begin{equation*}
\label{amalg_mult}
f_1\amalg f_2=\mu_A\circ(f_1\sqcup f_2):A\amalg A\rightarrow B
\end{equation*}

The canonical isomorphism $\tau_{12}:A_1\amalg A_2\rightarrow A_2\amalg A_1$ is given by $$
\begin{xy}
  \xymatrix{
  & A_2\amalg A_1     & \\
A_1\ar[r]_{\iota_1}\ar[ur]^{\iota'_2}   & A_1\amalg A_2\ar[u]_{\tau_{12}}& A_2\ar[l]^{\iota_2}\ar[ul]_{\iota'_1}\\
 }
\end{xy}
$$\begin{df}
Let $\mathcal{C}$ be an algebraic category. A {\bf co-group} or {\bf $H$-algebra} in $\mathcal{C}$ is given by a quadruple $(B,\Delta,\varepsilon,S)$, consisting of an object $B\in\operatorname{Obj}(\mathcal{C})$ and three $\mathcal{C}$-morphisms $\Delta:B\rightarrow B\amalg B$, the {\bf co-product},  $\varepsilon:B\rightarrow \mathbf{K}$, the {\bf co-unit} and  $S:B\rightarrow B$, the {\bf antipode},  which satisfy
\begin{eqnarray*}
(\id_B\sqcup\Delta)\circ\Delta&=& (\Delta\sqcup\id_B)\circ\Delta: B\rightarrow B\amalg B\amalg B\\
(\id\sqcup\,\varepsilon)\circ\Delta&=&\iota_1,\quad (\varepsilon\sqcup\id)\circ\Delta=\iota_2\\
\mu\circ(S\sqcup\id)\circ\Delta&=&\eta\circ\varepsilon=\mu\circ(\id\sqcup S)\circ\Delta \end{eqnarray*}
The triple $(B,\Delta,\varepsilon)$ is called an {\bf $H_0$-algebra} or {\bf dual semi-group}. 
\end{df}
Further, in an algebraic category we have $\varepsilon\circ\eta=\id_{\mathbf{K}}$, as 
$$
\begin{xy}
  \xymatrix{
   \mathbf{K}\ar[dr]_{\eta=\id_{\mathbf{K}}}\ar[r]^{\eta}  & A\ar[d]^{\varepsilon}\\
   &\mathbf{K}\\
  }
\end{xy}
$$
commutes.

\begin{df}
Let $(B,\Delta,\epsilon)$ be an $H_0$-algebra in $\mathcal{C}$ and $f,g\in\Hom_{\mathcal{C}}(B,A)$. The {\bf convolution} of $f$ and $g$ is given by
\begin{equation}
f\star g:=(f\amalg g)\circ\Delta=\mu\circ(f\sqcup g)\circ\Delta
\end{equation}
\end{df}
As was shown, cf.~\cite{V87a,Zha}, 
$(\Hom_{\mathcal{C}}(B,A),\star,\eta\circ\varepsilon)$  is a $\star$-multiplicative monoid with unit $\eta\circ\varepsilon$.

In the categories $\mathbf{Alg}_k$ and  $\mathbf{uAlg}_{k}$, the initial objects are $\{0\}$, which is also the terminal object in $\mathbf{Alg}_k$, and $k$, respectively. Both are algebraic, with the co-product given by the free product of algebras and the amalgamated free product of unital algebras.

Let  
$$
\mathbb{A}_2:=\bigcup_{n=1}^{\infty}\left\{\epsilon\in\{1,2\}^n,\epsilon_i\neq\epsilon_{i+1}, i\in\N^*\right\}
$$
be the set of {\bf alternating sequences} in two letters of the form  
$\epsilon=(\epsilon_1,\epsilon_2,\dots,\epsilon_n)$.
For $A_1,A_2\in\mathbf{Alg}_k$ the {\bf free product} is given, up to isomorphism, by
\begin{equation}
\label{free_prod}
A_1\amalg A_2=\bigoplus_{\epsilon\in\mathbb{A}_2} A_{\epsilon}=\bigoplus_{\epsilon\in\mathbb{A}_2} A_{\epsilon_1}\otimes\dots\otimes A_{\epsilon_n}
\end{equation}
with $A_{\epsilon}=A_{\epsilon_1}\otimes\cdots\otimes A_{\epsilon_n}$,
and the {\bf multiplication} by
$$
(a_1\otimes\dots\otimes a_m)\cdot(b_1\otimes\dots\otimes b_n):=\begin{cases}
      & a_1\otimes\dots\otimes (a_m\cdot b_1)\otimes\cdots\otimes b_n\quad\text{if  $\epsilon_m=\epsilon'_1$} \\
      & a_1\otimes\dots\otimes a_m\otimes b_1\otimes\cdots\otimes b_n\quad \text{if $\epsilon_m\neq \epsilon'_1$}.
\end{cases}
$$
for $a_1\otimes\cdots\otimes a_m\in A_{\epsilon}$ and $b_1\otimes\cdots\otimes b_n\in A_{\epsilon'}$ with $\epsilon,\epsilon'\in\mathbb{A}_2$.

Let $\tilde{A}:=k\mathbf{1}\oplus (A_1\amalg A_2)$ denote unital $k$-algebra which one obtains by adjoining a unit $\mathbf{1}$. There is a natural extension of algebra morphisms defined on $A_1\amalg A_2$ to the unital case.
\subsubsection{Amalgamated free product}

The free product in the category of unital algebras is the {\bf amalgamated free product}. 
For $A_1,A_2\in\mathbf{uAlg}_k$ let 
\begin{equation}
\label{1_ideal}
J:=\langle \iota_1(1_{A_1})-\iota_2(1_{A_2})\rangle
\end{equation}  
be the two-sided principal ideal in $A_1\amalg A_2$, generated by $\iota_1(1_{A_1})-\iota_2(1_{A_2})$. The quotient space 
\begin{equation*}
\label{amalg_free_prod}
A_1\amalg_1 A_2:=(A_1\amalg A_2)\big/ J
\end{equation*}
is an $k$-algebra with unit $\mathbf{1}=[1_{A_1}]=1_{A_1}+J=[1_{A_2}]=1_{A_2}+J$.

For $A\in\mathbf{uAlg}_{k}$ we shall assume $A$ to be {\bf augmented}, i.e., there exists a direct sum decomposition 
$$
A=k1_A\oplus\bar{A}
$$
with $\bar{A}\subset A$ a sub-algebra. Equivalently, there exists an unital algebra morphism $\varepsilon:A\rightarrow k$, the {\bf augmentation map}, with $\bar{A}=\ker(\varepsilon)$.
For $A_1,A_2\in\mathbf{uAlg}_k$ we have 
\begin{equation*}
A_1\amalg_1 A_2\cong k\mathbf{1}\oplus (\bar{A}_1\amalg\bar{A}_2)
\end{equation*}

\subsection{Power series and functors}

For $s\in\N^*$ consider the alphabet $[s]:=\{1,\dots,s\}$ with its natural order. A word $w$ is  a finite sequence $(i_1\dots i_n)$ of elements $i_j\in[s]$. Let $[s]^*$ denote the set of all finite words, including the empty word $\emptyset$. $[s]^*$ is countable and lexicographically ordered.
Let $[s]^*_+:=[s]^*\setminus\emptyset$, and for $n\in\N$,  let $[s]^*_n:=\{w\in[s]^*~|~|w|\leq n\}$ and $([s]^*_+)_n:=\{w\in[s]_+^*~|~1\leq|w|\leq n\}$, respectively.  

The set $[s]^*$ is a {\em monoid} with the multiplication given by concatenation of words, i.e. $( i_1\dots i_n)(j_1\dots j_m):=(i_1\dots i_n j_1\dots j_m)$, and 
unit $1$, corresponding to the empty word $\emptyset$. For $n\in\N^*$, the length $|w|$ of a word $w=(i_1\dots i_n)$ is $n$ and otherwise $|\emptyset|=0$.

For $s\geq2$ and $R\in\mathbf{cRing}_k$, let $R\langle\langle x_1,\dots, x_s\rangle\rangle$ be the set of formal power series in $s$ non-commuting variables 
$\{x_1,\dots, x_s\}$ with $R$-coefficients and $R\langle x_1,\dots, x_s\rangle$ the subset of non-commutative polynomials. 

We denote by $R\langle\langle x_1,\dots, x_s\rangle\rangle_+$ the set of power series {\bf without constant term} and by $R\langle\langle x_1,\dots, x_s\rangle\rangle_1$ the power series {\bf with constant term $1$}. 

For $i<j$ we let $x_i<x_j$ and consider the induced lexical ordering on words.

To every word $w=(i_1\dots i_n)\in\{1,\dots s\}^n$, $n\in\N^*$, 
corresponds a {\bf monomial} 
$$
x_w:=x_{i_1}\cdots x_{i_n}\qquad\text{with $i_j\in\{1,\dots, s\}$ for $j=1,\dots n$},
$$
and to the empty word the unit, i.e. $x_{\emptyset}:=1_R$.

A generic power series $f=f(x_1,\dots, x_s)$ with $R$-coefficients can be written as
 \begin{equation*}
 f(x_1,\dots x_s)=\sum_{w\in[s]^*} \alpha_w x_w,
 \end{equation*}
where $a_w$ or $a_{(i_1\dots i_n)}$ denotes the coefficient of $x_w$. Alternatively, we write $f_w$ for the coefficient of $x_w$.

For $s\geq2$, $R\langle\langle x_1,\dots, x_s\rangle\rangle$ is a non-commutative algebra with unit. 
We have the canonical isomorphisms:
\begin{eqnarray*}
\label{can_iso}
R\langle\langle x_1,\dots,x_s\rangle\rangle&\cong&\Hom_R(R\langle x_1,\dots, x_s\rangle,R)=(R\langle x_1,\dots, x_s\rangle)^*\\
R\langle x_1,\dots,x_s,y_1,\dots,y_s\rangle&\cong&
R\langle x_1,\dots, x_s\rangle\amalg_1 R\langle x_1,\dots, x_s\rangle 
\end{eqnarray*}
In particular, $\iota_1(x_{i})+\iota_2(x_{i})=x_i+y_i$ and $\iota_1(x_i)\iota_2(x_i)=x_iy_i$, for the embeddings $\iota_{1}$ and $\iota_{2}$.

For $s\in\N^*$, $R\in\mathbf{cAlg}_k$ let
\begin{eqnarray*}
\mathfrak{G}^s(R) & := & \{(r_1,\dots, r_s,\dots, r_w,\dots)~|~r_i\in R^{\times}, i\in[s], r_w\in R\quad \text{for $|w|\geq 2$}\},\\
\mathfrak{G}^s_+(R) & := & \{(1_1,\dots, 1_s,\dots, r_w,\dots)~|~r_w\in R\quad \text{for $|w|\geq 2$}\},
\end{eqnarray*}
\subsubsection{Non-commutative rings}
For $A\in\mathbf{uAlg}_{k}$ and $s\in\N^*$, let $\underline{a},\underline{b}\in A^s$, i.e. $\underline{a}=(a_1,\dots, a_s)$, $\underline{b}=(b_1,\dots, b_s)$. We define $\underline{a}+\underline{b}:=(a_1+b_1,\dots, a_s+b_s)$ and $\underline{a}\cdot_{\operatorname{H}}\underline{b}:=(a_1b_1,\dots, a_sb_s)$. Then $(A^s,+,\underline{0})$ is an abelian group with unit $\underline{0}=(0,\dots,0)$ and $(A^s,\cdot_{\operatorname{H}},\underline{1})$ a generally non-commutative multiplicative monoid with unit $\underline{1}=(1,\dots,1)$.

We shall represent $(A^s,+)$ and $(A^s,\cdot_{\operatorname{H}})$ for general $A\in\mathbf{uAlg}_k$ as convolution (semi) groups. We use the analog of primitive elements first.

\begin{prop}[\cite{Sch94}]
$k\langle\underline{x}\rangle_a:=(k\langle x_1,\dots, x_s\rangle,\Delta_a,\varepsilon_a, S_a)$ is a co-group  if we define for all $i\in[s]$ :  
\begin{eqnarray*}
\Delta_a(x_i)&:=&\iota_1(x_1)+\iota_2(x_i)\\
\varepsilon_a(x_i)&:=&0\\
S_a(x_i)&:=&-x_i
\end{eqnarray*}
and $\Delta_a(1):=1_R$, $\varepsilon_a(1):=1_R$ and $S_a(1):=1$.
\end{prop}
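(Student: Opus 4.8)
The plan is to exploit the freeness of $B := k\langle x_1,\dots,x_s\rangle$ so that every axiom reduces to a one-line computation on the generators. First I would record that, because $B$ is the free unital $k$-algebra on $x_1,\dots,x_s$, prescribing the images of the $x_i$ (together with $1\mapsto 1$) in any unital target algebra determines a unique morphism in $\mathbf{uAlg}_k$. Hence $\Delta_a$, $\varepsilon_a$ and $S_a$ are genuinely well-defined $\mathcal{C}$-morphisms: $\Delta_a$ is the unital extension of $x_i\mapsto\iota_1(x_i)+\iota_2(x_i)$ with target $B\amalg B$, $\varepsilon_a$ the augmentation $x_i\mapsto 0$ with target $\mathbf{K}=k$, and $S_a$ the extension of $x_i\mapsto -x_i$. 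The same universal property yields the decisive reduction: two morphisms out of $B$ agree as soon as they agree on each $x_i$, so it suffices to evaluate both sides of each co-group identity on the generators.

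For the computations I would fix the identifications of the excerpt, $B\amalg B\cong k\langle x_1,\dots,x_s,y_1,\dots,y_s\rangle$ with $\iota_1(x_i)=x_i$ and $\iota_2(x_i)=y_i$, and similarly $B\amalg B\amalg B\cong k\langle x_i,y_i,z_i\rangle$ for the triple coproduct, where the fold map $\mu$ sends every copy of a generator to the same $x_i$. Co-associativity is then immediate: applying $\Delta_a$ and then either $\id_B\sqcup\Delta_a$ or $\Delta_a\sqcup\id_B$ sends $x_i$ to $x_i+y_i+z_i$ in both cases, so the two composites coincide. For the counit axiom, $(\id\sqcup\varepsilon_a)\circ\Delta_a$ sends $x_i\mapsto x_i+\varepsilon_a(x_i)=x_i$, which is exactly $\iota_1(x_i)$ after the canonical identification $B\amalg\mathbf{K}\cong B$; the mirror computation gives $(\varepsilon_a\sqcup\id)\circ\Delta_a=\iota_2$. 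For the antipode, $\mu\circ(S_a\sqcup\id)\circ\Delta_a$ sends $x_i\mapsto\mu(-x_i+y_i)=-x_i+x_i=0=(\eta\circ\varepsilon_a)(x_i)$, and symmetrically for $\mu\circ(\id\sqcup S_a)\circ\Delta_a$; on $1$ all maps act as the identity, matching $\eta\circ\varepsilon_a(1)=1$. This establishes all three defining identities.

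The only genuine care-points --- and where I would expect the bookkeeping to be the real work rather than any conceptual difficulty --- are the correct unravelling of the operation $\sqcup$ and of the iterated coproduct. One must keep track of which inclusion is used when forming $\id_B\sqcup\Delta_a$ versus $\Delta_a\sqcup\id_B$, and invoke the associativity isomorphism $B\amalg(B\amalg B)\cong(B\amalg B)\amalg B$ consistently; similarly one must remember that $\mu$ is the universal fold map and that the ambient product in $B\amalg B$ is the genuinely non-commutative amalgamated free product, so that, e.g., $\Delta_a(x_ix_j)=(x_i+y_i)(x_j+y_j)$ keeps $x_iy_j\neq y_jx_i$. Since every axiom is an equality of homomorphisms verified on generators, this non-commutativity does not obstruct the argument; it merely means that the reduction-to-generators lemma is what carries the proof, and the explicit generator computations close it.
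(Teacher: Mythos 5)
Your proof is correct: reducing each co-group axiom to the generators via the universal property of the free algebra, and noting that every map involved (including the fold map $\mu$, and hence both sides of the antipode identity $\mu\circ(S_a\sqcup\id)\circ\Delta_a=\eta\circ\varepsilon_a$) is a morphism in $\mathbf{uAlg}_k$, makes the check on the $x_i$ a complete argument. The paper gives no proof of this particular proposition---it is quoted from Sch\"urmann's work---but your generator computation is precisely the method the paper uses for the multiplicative analogue proved immediately after it, so this is essentially the paper's own approach.
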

Let us give the $H_0$-algebra which represents $(A^s,\cdot_{\operatorname{H}},\underline{1})$. We note that it is related to~\cite{Sch94}, Example 2); indeed we could consider formal Laurent series. This is the analog of group-like elements.
\begin{prop}
$k\langle \underline{x}\rangle_m:=(k\langle x_1,\dots, x_s\rangle,\Delta_m,\varepsilon_m)$ is an unital $H_0$-algebra if we define for all $i\in[s]$: 
\begin{eqnarray}
\label{H_0_multiplicative}
\Delta_m(x_i)&:=&\iota_1(x_i)\iota_2(x_i)\\\nonumber
\varepsilon_m(x_i)&:=&1_R
\end{eqnarray}
and $\Delta_m(1):=1_R$ and $\varepsilon_m(1)=1_R$.
\end{prop}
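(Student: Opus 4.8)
The plan is to exploit that $B:=k\langle x_1,\dots,x_s\rangle$ is the free object on the generators $x_1,\dots,x_s$ in $\mathbf{uAlg}_k$, where the categorical co-product $\amalg$ is realised by the amalgamated free product $\amalg_1$ and the initial object is $\mathbf{K}=k$. By the universal property, any assignment of the $x_i$ to elements of a target unital algebra extends uniquely to a unital $k$-algebra morphism. This simultaneously shows that $\Delta_m$ and $\varepsilon_m$ are well defined (the prescribed images $\iota_1(x_i)\iota_2(x_i)\in B\amalg_1 B$ and $1_R\in\mathbf{K}$ determine unique morphisms) and that $\Delta_m(1)=1$, $\varepsilon_m(1)=1$ hold automatically. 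Crucially, it also reduces the verification of the $H_0$-axioms to the generators: every map occurring in the co-associativity and co-unit relations is a composite of $k$-algebra morphisms, hence itself a morphism out of the free algebra $B$, so two such maps agree as soon as they agree on each $x_i$.

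For co-associativity I would work in the triple amalgamated free product, using the canonical isomorphism $B\amalg_1 B\amalg_1 B\cong k\langle x_1,\dots,x_s,y_1,\dots,y_s,z_1,\dots,z_s\rangle$ and writing $\iota_1(x_i)=x_i$, $\iota_2(x_i)=y_i$, $\iota_3(x_i)=z_i$. Unwinding the definition of $\sqcup$, the map $\id_B\sqcup\Delta_m$ fixes the first factor and applies $\Delta_m$ in the second, so it sends $x_i\mapsto x_i$ and $y_i\mapsto y_iz_i$; dually $\Delta_m\sqcup\id_B$ sends $x_i\mapsto x_iy_i$ and $y_i\mapsto z_i$. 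Applying these to $\Delta_m(x_i)=x_iy_i$ and using multiplicativity gives, on the one hand, $x_i\cdot(y_iz_i)=x_iy_iz_i$, and on the other, $(x_iy_i)\cdot z_i=x_iy_iz_i$. The two composites therefore coincide on every generator, whence $(\id_B\sqcup\Delta_m)\circ\Delta_m=(\Delta_m\sqcup\id_B)\circ\Delta_m$.

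For the co-unit relations I compute similarly, now using the canonical isomorphisms $B\amalg_1\mathbf{K}\cong B\cong\mathbf{K}\amalg_1 B$. Since $\varepsilon_m(x_i)=1_R$ is the unit scalar, $\id_B\sqcup\varepsilon_m$ sends $\iota_1(x_i)\mapsto x_i$ and $\iota_2(x_i)\mapsto 1$, so $(\id_B\sqcup\varepsilon_m)(\Delta_m(x_i))=x_i\cdot 1=x_i$, which is precisely $\iota_1(x_i)$ under the identification; symmetrically $\varepsilon_m\sqcup\id_B$ sends $\iota_1(x_i)\mapsto 1$ and $\iota_2(x_i)\mapsto x_i$, giving $(\varepsilon_m\sqcup\id_B)(\Delta_m(x_i))=1\cdot x_i=x_i=\iota_2(x_i)$. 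Agreement on generators again upgrades to equality of morphisms, establishing $(\id\sqcup\varepsilon_m)\circ\Delta_m=\iota_1$ and $(\varepsilon_m\sqcup\id)\circ\Delta_m=\iota_2$.

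I expect the only real subtlety to be bookkeeping rather than anything structural: because we are in the (amalgamated) free product and not a tensor product, $\iota_1(x_i)$ and $\iota_2(x_i)$ do not commute, so one must track the left-to-right order of factors throughout. The definition $\Delta_m(x_i)=\iota_1(x_i)\iota_2(x_i)$ is precisely the non-commutative analogue of a group-like element, and the key point is that the orderings produced by the two sides of co-associativity genuinely match (both equal $x_iy_iz_i$) rather than matching merely up to a permutation. Once this ordering is respected, no antipode is needed, and the multiplicative monoid structure $(A^s,\cdot_{\operatorname H},\underline 1)$---which is only a monoid, not a group---is faithfully represented, consistent with $k\langle\underline x\rangle_m$ being merely an $H_0$-algebra and not a full co-group.
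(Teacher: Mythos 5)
Your proof is correct and takes essentially the same approach as the paper's: identify $B\amalg_1 B$ with $k\langle x_1,\dots,x_s,y_1,\dots,y_s\rangle$, verify the $H_0$-axioms on the generators $x_i$, and let the universal property of the free algebra (all maps involved being unital algebra morphisms) upgrade this to equality of morphisms. You are in fact somewhat more thorough than the paper, which only writes out the two co-unit identities $(\varepsilon\sqcup\id)\circ\Delta_m(x_i)=x_i$ and $(\id\sqcup\varepsilon)\circ\Delta_m(x_i)=x_i$ together with the action of $\Delta_m$ on words, leaving the co-associativity computation $x_i(y_iz_i)=(x_iy_i)z_i$ implicit, whereas you verify it explicitly.
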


\begin{proof}
For a word $w=x_{i_1}\dots x_{i_n}$ we have $\Delta_m(w)=\Delta(x_{i_1})\cdots\Delta(x_{i_n})=x_{i_1} y_{i_1}\dots x_{i_n}y_{i_n}$.
$$
(\varepsilon\sqcup\id)\circ\Delta_m(x_i)=(\varepsilon\sqcup\id)(\iota_1(x_i)\iota_2(x_i))=\underbrace{(\varepsilon\sqcup\id)(\iota_1(x_i)}_{=1}\underbrace{(\varepsilon\sqcup\id)(\iota_2(x_i)}_{=x_i}=x_i
$$
$$
(\id\sqcup\varepsilon)\circ\Delta_m(x_i)=(\id\sqcup\varepsilon)(\iota_1(x_i)\iota_2(x_i))=\underbrace{(\id\sqcup\varepsilon)(\iota_1(x_i)}_{=\id(x_i)}\underbrace{(\id\sqcup\varepsilon)(\iota_2(x_i)}_{=1}=x_i
$$
\end{proof}
For $\underline{a}\in A^s$ we define the unital algebra morphism
$
j_{\underline{a}}:k\langle x_1,\dots,x_n\rangle\rightarrow A,\quad x_i\mapsto a_i.
$
\begin{prop} 
For $A\in\mathbf{uAlg}_k$, the following isomorphisms hold for the convolution (semi)-groups
$$
(A^s,+,\underline{0})\cong\Hom_{\mathbf{uAlg}_k}(k\langle \underline{x}\rangle_a,A)\quad\text{and}\quad(A^s,\cdot_{\operatorname{H}},\underline{1})\cong\Hom_{\mathbf{uAlg}_k}(k\langle \underline{x}\rangle_m,A).
$$
The unital algebra morphisms
\begin{eqnarray*}
j_{\underline{a}+ \underline{b}}(x_{i_1}\dots x_{i_n})&=&(a_{i_1}+ b_{i_1})\cdots (a_{i_n} +b_{i_n})\\
j_{\underline{a}\cdot_{\operatorname{H}} \underline{b}}(x_{i_1}\dots x_{i_n})&=&a_{i_1} b_{i_1}\cdots a_{i_n} b_{i_n}
\end{eqnarray*}
are representable as 
\begin{eqnarray*}
j_{\underline{a}+\underline{b}}&=&j_{\underline{a}}\amalg j_{\underline{b}}\circ\Delta_a\\
j_{\underline{a}\cdot_{\operatorname{H}}\underline{b}}&=&j_{\underline{a}}\amalg j_{\underline{b}}\circ\Delta_m
\end{eqnarray*}

\end{prop}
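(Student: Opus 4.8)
The plan is to exhibit the assignment $\underline{a}\mapsto j_{\underline{a}}$ as the desired isomorphism in both cases and to reduce everything to a verification on the generators $x_1,\dots,x_s$. First I would establish the underlying bijection of sets. Since $k\langle x_1,\dots,x_s\rangle$ is the free unital $k$-algebra on $\{x_1,\dots,x_s\}$, its universal property says that a unital algebra morphism into $A$ is uniquely determined by, and may be freely prescribed on, the images of the generators; hence $\underline{a}\mapsto j_{\underline{a}}$ is a bijection $A^s\cong\Hom_{\mathbf{uAlg}_k}(k\langle\underline{x}\rangle,A)$. The two structures $k\langle\underline{x}\rangle_a$ and $k\langle\underline{x}\rangle_m$ share this same underlying algebra, so a single set-theoretic bijection serves both statements; only the monoid laws differ, coming from $\Delta_a$ versus $\Delta_m$.

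Next I would check that the bijection intertwines the products, using $f\star g=(f\amalg g)\circ\Delta$ together with the defining property that $f\amalg g\colon B\amalg B\to A$ is the unique morphism with $(f\amalg g)\circ\iota_1=f$ and $(f\amalg g)\circ\iota_2=g$. In the additive case, evaluating on a generator gives $(j_{\underline{a}}\star j_{\underline{b}})(x_i)=(j_{\underline{a}}\amalg j_{\underline{b}})(\iota_1(x_i)+\iota_2(x_i))=a_i+b_i=j_{\underline{a}+\underline{b}}(x_i)$. Because $j_{\underline{a}}\star j_{\underline{b}}=(j_{\underline{a}}\amalg j_{\underline{b}})\circ\Delta_a$ is a composite of algebra morphisms, it is itself an algebra morphism, so agreeing with the algebra morphism $j_{\underline{a}+\underline{b}}$ on generators forces equality on all of $k\langle\underline{x}\rangle$. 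The convolution unit $\eta\circ\varepsilon_a$ sends $x_i\mapsto 0$, hence equals $j_{\underline{0}}$, and $j_{\underline{a}}\circ S_a$ sends $x_i\mapsto -a_i$, so the antipode axiom realises $j_{-\underline{a}}$ as the $\star$-inverse of $j_{\underline{a}}$. This upgrades the bijection to a group isomorphism $(A^s,+,\underline{0})\cong(\Hom_{\mathbf{uAlg}_k}(k\langle\underline{x}\rangle_a,A),\star,\eta\circ\varepsilon_a)$.

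The multiplicative case runs along the same lines, now using $\Delta_m(x_i)=\iota_1(x_i)\iota_2(x_i)$ and the multiplicativity of the copairing: $(j_{\underline{a}}\star j_{\underline{b}})(x_i)=(j_{\underline{a}}\amalg j_{\underline{b}})(\iota_1(x_i))\,(j_{\underline{a}}\amalg j_{\underline{b}})(\iota_2(x_i))=a_i b_i=j_{\underline{a}\cdot_{\operatorname{H}}\underline{b}}(x_i)$, whence $j_{\underline{a}}\star j_{\underline{b}}=j_{\underline{a}\cdot_{\operatorname{H}}\underline{b}}$ by the same agreement-on-generators argument. Here one must keep track of the non-commutative bookkeeping: on a word $x_{i_1}\cdots x_{i_n}$ the identity $\Delta_m(w)=x_{i_1}y_{i_1}\cdots x_{i_n}y_{i_n}$ (from the proof of the preceding proposition) yields the \emph{interleaved} product $a_{i_1}b_{i_1}\cdots a_{i_n}b_{i_n}$, not $(a_{i_1}\cdots a_{i_n})(b_{i_1}\cdots b_{i_n})$, which is precisely the displayed formula for $j_{\underline{a}\cdot_{\operatorname{H}}\underline{b}}$. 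The unit $\eta\circ\varepsilon_m$ sends $x_i\mapsto 1_A$, i.e. equals $j_{\underline{1}}$; since $\Delta_m$ carries no antipode in general, one obtains only a monoid isomorphism $(A^s,\cdot_{\operatorname{H}},\underline{1})\cong(\Hom_{\mathbf{uAlg}_k}(k\langle\underline{x}\rangle_m,A),\star,\eta\circ\varepsilon_m)$, as the ``(semi)'' in the statement signals.

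I expect no serious obstacle: associativity and the unit axioms for $\star$ were already established for any $H_0$-algebra, so the only genuine work is the reduction to generators, which is licensed by the universal property of the free algebra together with the observation that $\star$-products of algebra morphisms are again algebra morphisms. The one place that demands care is the interleaving in $\Delta_m(w)$ in the non-commutative multiplicative case, where one must respect the order so that the Hadamard product $\cdot_{\operatorname{H}}$ is reproduced correctly.
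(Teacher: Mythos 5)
Your proposal is correct and takes essentially the same route as the paper: both verify $j_{\underline{a}+\underline{b}}=(j_{\underline{a}}\amalg j_{\underline{b}})\circ\Delta_a$ and $j_{\underline{a}\cdot_{\operatorname{H}}\underline{b}}=(j_{\underline{a}}\amalg j_{\underline{b}})\circ\Delta_m$ by evaluation, the paper expanding directly on arbitrary words $x_{i_1}\cdots x_{i_n}$ where you reduce to generators via the universal property of the free algebra and the fact that both sides are unital algebra morphisms. If anything your write-up is more complete, since the paper's proof consists only of this computation and leaves the set-theoretic bijection $A^s\cong\Hom_{\mathbf{uAlg}_k}(k\langle\underline{x}\rangle,A)$ and the unit/antipode bookkeeping implicit.
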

\begin{proof}
\begin{eqnarray*}
j_{\underline{a}+\underline{b}}(x_{i_1}x_{i_2}\dots x_{x_n})&=&(a_{i_1}+b_{i_1})(a_{i_2}+b_{i_2})\dots(a_{i_n}+b_{i_n})\\
(j_a\amalg j_b)\circ\Delta_a(x_{i_1}x_{i_2}\dots x_{i_n})&=&(j_a\amalg j_b)(\iota_1(x_{i_1})+\iota_2(x_{i_1}))\cdots(j_a\amalg j_b)(\iota_1(x_{i_n})+\iota_2(x_{i_n}))\\
j_{\underline{a}\cdot_{\operatorname{H}}\underline{b}}(x_{i_1}x_{i_2}\dots x_{x_n})&=&(a_{i_1}b_{i_1})(a_{i_2}b_{i_2})\dots(a_{i_n}b_{i_n})\\
(j_a\amalg j_b)\circ\Delta_m(x_{i_1}x_{i_2}\dots x_{i_n})&=&j_a\amalg j_b(\iota_1(x_{i_1})\iota(x_{i_1}))\cdots j_a\amalg j_b(\iota_1(x_{i_n})\iota_2(x_{\iota_n}))\\
&=&j_a\amalg j_b(\iota_1(x_{i_1}))j_a\amalg j_b(\iota_2(x_{i_1}))\cdots j_a\amalg j_b(\iota_1(x_{i_n}))j_a\amalg j_b(\iota_2(x_{i_n}))
\end{eqnarray*}
\end{proof}
\section{Universal products and quantum probability}
Here we recall basic facts but also extend some of the previous results from~\cite{GSch1,GSch2,Fra,Lac,Sch94}. 
\subsection{Universal products}
\begin{df}[\cite{GSch1,GSch2,Spe97}]
\label{df:up} Let  $A_i\in\mathbf{Alg}_k$, $\varphi_i\in\Hom_k(\mathcal{A}_i,k)$, $i\in\{1,2,3\}$. The {\bf universal product} of $k$-linear functionals is a map 
$$
\bullet:\Hom_k(A_1,k)\times\Hom_k(A_2,k)\rightarrow\Hom_k(A_1\amalg A_2,k),
$$
which satisfies:
\begin{description}
\item[UP1] $(\varphi_1\bullet \varphi_2)\bullet \varphi_3=\varphi_1\bullet (\varphi_2\bullet \varphi_3)$, (associative)
\item[UP2] $(\varphi_1\bullet\varphi_2)\circ\iota_{1}=\varphi_1$ and $(\varphi_1\bullet\varphi_2)\circ\iota_{2}=\varphi_2$,
\item[UP3] for $j_i\in\Hom_{\mathbf{Alg}_k}(C_i,A_i)$, $i=1,2$, 
$(\varphi_1\circ j_1)\bullet(\varphi_2\circ j_2)=(\varphi_1\bullet\varphi_2)\circ(j_1\sqcup j_2):C_1\amalg C_2\rightarrow k$. 
\end{description}
\end{df}
There exist five different universal products which satisfy the above requirements, as shown by R. Speicher~\cite{Spe97} and N. Muraki~\cite{Mur}, namely the {\bf tensor} $(\operatorname{T})$, {\bf free} $(\operatorname{F})$, {\bf boolean} $(\operatorname{B})$, {\bf monotone} $(\operatorname{M})$ and {\bf anti-monotone} $(\operatorname{aM})$ product. Subsequently, A. Ben Ghorbal and M. Schürmann~\cite{GSch1,GSch2} classified them.

The universal product is {\bf commutative} if 
\begin{equation}
\label{commutative_prod}
\varphi_1\bullet\varphi_2=(\varphi_2\bullet\varphi_1)\circ\tau_{12},
\end{equation}
where $\tau_{12}$ is the canonical isomorphism previously introduced. Equivalently we have the commutative diagram 
$$
\begin{xy}
  \xymatrix{
   A_1\amalg A_2\ar[d]_{\tau_{12}}\ar[r]^{\varphi_1\bullet \varphi_2} & k\\
   A_2\amalg A_1\ar[ur]_{\varphi_2\bullet \varphi_1}&\\
  }
\end{xy}
$$
The tensor, free and boolean product are commutative~cf.~\cite{Spe97,GSch1}.

We shall be also interested in the situation where $A_1=A_2$, and therefore both $\varphi_1$ and $\varphi_2$ are defined on $A_1$ and $A_2$. Then $\varphi_1\bullet\varphi_2, \varphi_2\bullet\varphi_1\in\Hom_k(A_1\amalg A_2,k)$ but $\varphi_1\bullet\varphi_2\neq \varphi_2\bullet\varphi_1$ as maps $A_1\amalg A_2\rightarrow k$ even though they commute in the sense of~(\ref{commutative_prod}).

For $A_1,A_2\in\mathbf{Alg}_k$, $\epsilon=(\epsilon_1,\dots,\epsilon_m)\in\mathbb{A}_2$, we use the abbreviated notation $a_1a_2\cdots a_m:=a_1\otimes a_2\otimes\cdots\otimes a_m\in A_1\amalg A_2$, cf.~(\ref{free_prod}). For a subset $I\subset\{1,2,\dots,m\}$, we denote by $\prod_{i\in I}a_i$ the algebra product of the $a_i$, taken in the same order as they occur in $a_1a_2\cdots a_m$. E.g. for $m=5$ and $I=(2,3,5)$ we have $\prod_{i\in I}a_i=a_2a_3a_5$ and $\prod_{i\notin I}a_i=a_1a_4$. We note that contractions might occur, namely if two consecutive elements are from the same algebra.

\begin{df}[\cite{Spe97,Mur}]
\label{coord-prod}
For $k$-linear functionals $f\in A_1^*$, $g\in A_2^*$, the tensor $\bullet_{\operatorname{T}}$, free $\bullet_{\operatorname{F}}$, boolean $\bullet_{\operatorname{B}}$,  monotone $\bullet_{\operatorname{M}}$, and anti-monotone product $\bullet_{\operatorname{aM}}$, are given in co-ordinates, recursively in the free case, by
\begin{eqnarray*}
f\bullet_{\operatorname{T}} g(a_1a_2\dots a_m)&=&f\left({{\prod_{i;\epsilon_i=1}}}a_i\right) g\left({\prod_{i;\epsilon_i=2}}a_i\right)\\
f\bullet_{\operatorname{F}} g(a_1a_2\dots a_m)&=&\sum_{I\subsetneq\{1,\dots,m\}}(-1)^{m-|I|+1}f\bullet_{\operatorname{F}} g\left({\prod_{i\in I}}a_i\right)\prod_{i\notin I}\varphi_{\epsilon_i}(a_i)\\
\text{with}\quad f\bullet_{\operatorname{F}}g\left(\prod_{i\in\emptyset}a_i\right)&:=&1,\quad\text{$\varphi_{\epsilon_i}:=f$, $\epsilon_i=1$, $\varphi_{\epsilon_i}:=g$, $\epsilon_i=2$, recursively}\\
f\bullet_{\operatorname{B}} g(a_1a_2\dots a_m)&=&\left(\prod_{i;\epsilon_i=1}f(a_i)\right)\left(\prod_{i;\epsilon_i=2}g(a_i)\right)
\end{eqnarray*}
and the two discovered by N.~Muraki:
\begin{eqnarray*}
f\bullet_{\operatorname{M}} g(a_1a_2\dots a_m)&=&f\left({\prod_{i;\epsilon_i=1}}a_i\right) {\prod_{i;\epsilon_i=2}}g(a_i)\\
f\bullet_{\operatorname{aM}} g(a_1a_2\dots a_m)&=&\prod_{i;\epsilon_i=1}f(a_i)\, g\left({\prod_{i;\epsilon_i=2}}a_i\right)
\end{eqnarray*}
\end{df}

Let us give the following instructive example. We have
$$
f\bullet_{\operatorname{M}} g(x_1y_1x_2y_2)=f(x_1x_2)g(y_1)g(y_2)\neq f(x_1)f(x_2)g(y_1y_2),
$$
in general, i.e. the monotone product is not commutative and similarly for the anti-monotone product in both senses.

For $k$-vector spaces $V,W$ let $W\subset V$ and $f:V\rightarrow k$ be a linear functional. Then $f$ induces on the quotient space $V/W$, a well-defined linear functional $[f]$ if and only if $f|_W\equiv 0$. 

It was shown, cf.~\cite{GSch1} that for tensor and free independence the universal product vanishes on the ideal $J$, cf.~(\ref{1_ideal}), and hence gives rise to a well-defined linear functional on the amalgamated free product. However, this is not true for boolean, monotone and anti-monotone independence, as e.g. $f\bullet_B g(x(1_x-1_y)x)=f(xx)-f(x)^2\neq0$ in general.

Let us note, that in the category $\mathbf{Alg}_k$, we have $\varepsilon\equiv0$ for any $H_0$-algebra.

\begin{df}
Let $(B,\Delta,\varepsilon)$ be an $H_0$-algebra in $\mathcal{C}$, where $\mathcal{C}$ is $\mathbf{Alg}_k$ or $\mathbf{uAlg}_k$, $\mathbf{K}=0$ or $=k$, respectively, and $\bullet$ an universal product. The 
{\bf $\bullet$-labelled convolution}  is
the map
$$
\star_{\bullet}:\Hom_{\mathcal{C}}(B,k)\times\Hom_{\mathcal{C}}(B,k)\rightarrow\Hom_{\mathcal{C}}(B,k)
$$
which for $\varphi_1,\varphi_2\in\Hom_{\mathcal{C}}(B,k)$ is given by
$$
\varphi_1\star_{\bullet}\varphi_2:=(\varphi_1\bullet\varphi_2)\circ\Delta:B\rightarrow k.
$$
\end{df}
The following is an extension of~\cite{Sch94} and~\cite{GSch1,GSch2} where the original statement was for dual semi-groups in $\mathbf{Alg}_k$.  
\begin{prop}
\label{conv_thm}
Let $(B,\Delta,0)$ be an $H_0$-algebra in $\mathbf{Alg}_k$. $(\Hom_{k}(B,k),\star_{\bullet},0)$ is an associative, $\star_{\bullet}$-multiplicative monoid with unit $0$.

If $(B,\Delta,\varepsilon)$ is an $H_0$-algebra in $\mathbf{uAlg}_k$ then $(\Hom_{k,1}(B,k),\star_{\bullet},\varepsilon)$ is an associative $\star_{\bullet}$-multiplicative monoid with unit $\varepsilon$ for tensor and free independence.

For the monotone and anti-monotone product, $\varepsilon$ is the right, respectively, left neutral element.
\end{prop}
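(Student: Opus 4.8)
The plan is to separate the purely formal part---associativity, which holds for \emph{every} universal product---from the unit laws, whose fate depends on whether the chosen product descends to the amalgamated free product modulo the ideal $J$ of~(\ref{1_ideal}). Throughout I would argue with the three axioms \textbf{UP1}--\textbf{UP3} and the $H_0$-algebra axioms directly, calling on the coordinate formulae of Definition~\ref{coord-prod} only to decide the descent questions for the non-symmetric products.

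For associativity I would expand $(\varphi_1\star_\bullet\varphi_2)\star_\bullet\varphi_3=\big(((\varphi_1\bullet\varphi_2)\circ\Delta)\bullet\varphi_3\big)\circ\Delta$ and apply \textbf{UP3} with $j_1=\Delta$ and $j_2=\id_B$ to rewrite the inner product as $\big((\varphi_1\bullet\varphi_2)\bullet\varphi_3\big)\circ(\Delta\sqcup\id_B)$; symmetrically $\varphi_1\star_\bullet(\varphi_2\star_\bullet\varphi_3)=\big(\varphi_1\bullet(\varphi_2\bullet\varphi_3)\big)\circ(\id_B\sqcup\Delta)\circ\Delta$. The two expressions agree because $\bullet$ is associative by \textbf{UP1} and $\Delta$ is coassociative, $(\Delta\sqcup\id_B)\circ\Delta=(\id_B\sqcup\Delta)\circ\Delta$. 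Nothing here uses the specific product, so associativity is uniform across all five cases.

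The unit laws I would obtain from a single reduction. Taking $\phi_1=\id_{\mathbf K}$, $j_1=\varepsilon$ and $\phi_2=\varphi$, $j_2=\id_B$ in \textbf{UP3} gives $\varepsilon\bullet\varphi=(\id_{\mathbf K}\bullet\varphi)\circ(\varepsilon\sqcup\id_B)$. Since $\mathbf K$ is initial, $\iota_2\colon B\to\mathbf K\amalg B$ is an isomorphism, and \textbf{UP2} forces $\id_{\mathbf K}\bullet\varphi=\varphi\circ\iota_2^{-1}$; composing with the counit axiom $(\varepsilon\sqcup\id_B)\circ\Delta=\iota_2$ yields $\varepsilon\star_\bullet\varphi=\varphi\circ\iota_2^{-1}\circ\iota_2=\varphi$, and the mirror computation with $\iota_1$ and $(\id_B\sqcup\varepsilon)\circ\Delta=\iota_1$ gives $\varphi\star_\bullet\varepsilon=\varphi$. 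In $\mathbf{Alg}_k$ one has $\mathbf K=\{0\}$, $\varepsilon\equiv 0$, and the plain free product carries no relations, so the reduction applies to all five products and, together with the associativity above, proves the first assertion, that $0$ is a two-sided unit. In $\mathbf{uAlg}_k$ the identical computation works \emph{as soon as} $\id_{\mathbf K}\bullet\varphi$ is well defined on $\mathbf K\amalg_1 B$, i.e. as soon as the product annihilates $J$; this descent holds for the tensor and free products, giving the two-sided unit $\varepsilon$ of the second assertion.

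The crux is the third assertion, where the reduction half-fails, and this is where I expect genuine care rather than formal manipulation. In $\varphi\bullet_{\operatorname{M}}\varepsilon$ the $g$-slot is read boolean-wise as $\prod_{\epsilon_i=2}\varepsilon(a_i)$, and because $\varepsilon$ is a unital character this equals $\varepsilon\big(\prod_{\epsilon_i=2}a_i\big)$; one then checks on the generators $u\,(\iota_1(1_B)-\iota_2(1_B))\,v$ of $J$ that the functional vanishes, so $\varphi\bullet_{\operatorname{M}}\varepsilon=\varphi\circ(\id_B\sqcup\varepsilon)$ descends and $\varphi\star_{\bullet_{\operatorname{M}}}\varepsilon=\varphi\circ\iota_1=\varphi$, making $\varepsilon$ right neutral. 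For $\varepsilon\bullet_{\operatorname{M}}\varphi$, however, the boolean $g$-slot now carries the non-multiplicative $\varphi$, and on $\iota_2(y)(\iota_1(1_B)-\iota_2(1_B))\iota_2(y)\in J$ the functional takes the value $\varphi(y)^2-\varphi(y^2)\neq 0$ in general, so it does not descend and $\varepsilon$ is not left neutral; the anti-monotone case is the exact mirror, with the roles of the two slots interchanged. The delicate bookkeeping---identifying, for each non-symmetric product, which slot is evaluated jointly and which boolean-wise---is precisely what pins down the side on which the counit survives as a neutral element.
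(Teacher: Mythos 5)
Your proof is correct, and its core---the unit law---follows essentially the same route as the paper's own proof: the paper likewise derives $f\star_{\bullet}\varepsilon=f$ purely formally, by writing $\varepsilon=\varepsilon\circ\eta\circ\varepsilon$, applying \textbf{UP3} to obtain $(f\bullet\varepsilon)\circ(\id\sqcup\eta)\circ(\id\sqcup\varepsilon)\circ\Delta$, and finishing with the counit axiom and \textbf{UP2}; your variant through $\id_{\mathbf K}\bullet\varphi$ and the isomorphism $\iota_2\colon B\to\mathbf K\amalg B$ is the same argument in different packaging. Where you go beyond the paper's written proof: the paper details only that single computation (associativity is left to the cited references, and the one-sidedness for the monotone and anti-monotone products is never argued inside the proof---it is only foreshadowed by the Boolean counterexample $f\bullet_{\operatorname{B}}g(x(1_x-1_y)x)=f(xx)-f(x)^2$ given before the statement). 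Your descent analysis---that $\varphi\bullet_{\operatorname{M}}\varepsilon$ kills the ideal $J$ because the slot evaluated boolean-wise carries the multiplicative $\varepsilon$, whereas $\varepsilon\bullet_{\operatorname{M}}\varphi$ takes the value $\varphi(y)^2-\varphi(y^2)\neq 0$ on $\iota_2(y)(\iota_1(1_B)-\iota_2(1_B))\iota_2(y)\in J$, with the mirror statement for $\bullet_{\operatorname{aM}}$---is precisely the ingredient that turns the third assertion into a proved statement rather than an observation, since it identifies on which side the paper's generic unit computation may legitimately be run, i.e.\ on which side the product descends to $B\amalg_1 B$ at all. So: same approach, but your write-up completes the parts the paper leaves implicit.
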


Let us give a detailed proof for the unit. 
\begin{proof}
Recall from Section~\ref{sec:co_groups} the identities $\varepsilon\circ\eta\circ\varepsilon=\varepsilon$, 
$(\id_A\amalg\,\eta)\circ\iota_A=\id_A$ and $\iota_A\circ(\id_A\amalg\,\eta)=\id_{A\amalg {\mathbf{K}}}$ (note that we use $A$ instead of $B$).

Then, cf. the commutative diagram below, we have:
\begin{eqnarray*}
f\star_{\bullet}\varepsilon&=&\left(f\bullet\varepsilon)\circ\Delta=((f\circ\id_A\circ\id_A)\bullet(\varepsilon\circ \eta\circ\varepsilon)\right)\circ\Delta\\
&=_{\mathbf{UP3}}&(f\bullet\varepsilon)\circ(\id_A\sqcup \eta)\circ(\id_A\sqcup\varepsilon)\circ\Delta\\
&=&(f\bullet\varepsilon)\circ(\id_A\sqcup \eta)\circ\iota_1
=(f\bullet\varepsilon)\circ\iota_1=_{\mathbf{UP2}}f
\end{eqnarray*}

$$
\begin{xy}
  \xymatrix{&k&\\
A\ar[r]^{\iota_1}\ar[ur]^{f} & A\amalg A \ar[u]_{f\bullet\varepsilon}   & A\ar[l]_{\iota_2}\ar[ul]_{\varepsilon}\\
A\ar[r]_{\iota_1}^{\sim}\ar[u]_{\id_A}   & A\amalg k\ar[u]_{\id_A\sqcup\;\eta}&k\ar[l]_{\iota_{2}}\ar[u]_{\eta}\\
A\ar[u]_{\id_A}\ar[r]_{\Delta}  &A\amalg A \ar[u]_{\id_A\sqcup\;\varepsilon}&A\ar[l]_{\iota_2}\ar[u]_{\varepsilon}\\
 }
\end{xy}
$$
\end{proof}

\subsection{Independence and moment series}
We introduce now the central notion of ``independence". A categorification was given by U. Franz~\cite{Fra}, which we shall currently not use.
\begin{df}
A {\bf non-commutative $k$-probability space} is given by a pair $(A,\phi)$ with $A\in\mathbf{uAlg}_k$ and $\phi\in\Hom_{k,1}(A,k)$, i.e. $\phi(1_{\mathcal{A}})=1_k$. An element $a\in A$ is called a {\bf non-commutative random variable}.
\end{df}

\begin{df}
Let $(A,\phi)$ be a non-commutative $k$-probability space and $B_1,\dots, B_n\in\mathbf{uAlg}_k$. A family $j_1,\dots, j_n$ of (unital) algebra morphisms $j_i:B_i\rightarrow A$, is {\bf independent with respect to fixed universal product} $\bullet$, if
\begin{equation}
\phi\circ(j_1\amalg_1\dots\amalg_1 j_n)= (\phi\circ j_1)\bullet\dots\bullet (\phi\circ j_n):B_1\amalg_1\dots\amalg_1 B_n\rightarrow k
\end{equation}
holds.
\end{df}

Let us introduce the sequence $(m_n)_{n\in\N}$ of {\bf moment forms}. These are $k$-multilinear $n$-forms, given by $m_0(1):=1$, and 
\begin{equation*}
\label{moment_forms}
m_n:A^{\otimes n}\rightarrow k,\quad m_n(a_1\otimes\dots\otimes a_n):=\phi(a_1\cdots a_n),\quad  n\in\N^*.
\end{equation*}
For $\underline{a}=(a_1,\dots, a_s)\in\mathcal{A}^s$, the {\bf joint moment series} $M_{\underline{a}}$ is the formal power series 
\begin{equation*}
\label{moment_series*}
M_{a_1,\dots,a_s}(x_1,\dots, x_s):=\sum_{n\in\N}\sum_{1\leq{i_1}\leq\dots\leq i_n\leq s}m_n(a_{i_1}\otimes\cdots\otimes a_{i_n})x_{i_1}\cdots x_{i_n}
\end{equation*}
in the non-commuting variables $x_1,\dots, x_s$, cf.~\cite{NS}.

In what follows next we shall be mainly interested in the additive and multiplicative convolution of moment series. We recall that $k\langle\langle x_1,\dots,x_s\rangle\rangle\cong(k\langle x_1,\dots,x_s\rangle)^*$.

For $f,g\in k\langle\langle x_1,\dots,x_s\rangle\rangle_1$ we define the $\bullet$-labelled convolutions for the co-products $\Delta_a$ and $\Delta_m$, respectively:  
\begin{eqnarray*}
f\star_{\bullet,a} g:=(f\bullet g)\circ\Delta_a,\\
f\star_{\bullet,m} g:=(f\bullet g)\circ\Delta_m.
\end{eqnarray*}
For the free case we introduce the following specific notations:
\begin{equation}
\label{}
\boxplus_V:=(-\bullet_{\operatorname{F}}-)\circ\Delta_a,\qquad \boxtimes_V:=(-\bullet_{\operatorname{F}}-)\circ\Delta_m.
\end{equation}

For a word $w=x_{i_1}\dots x_{i_m}$ let $\bar{w}:=y_{i_1}\dots y_{i_m}$ be the ``conjugate" word and define the following operations on words:
\begin{eqnarray*}
w\bar{w}&:=&(x_{i_1}+y_{i_1})\cdots (x_{i_m}+y_{i_m})=w+\bar{w}+\text{mixed terms}, \\
w\cdot_{\operatorname{H}}\bar{w}&:=&x_{i_1}y_{i_1}\dots x_{i_m}y_{i_m}.
\end{eqnarray*}
We have $w\bar{w}=\bar{w}w$, i.e. $w\bar{w}$ is symmetric with respect to $x_i\leftrightarrow y_i$.

This can be shown by induction. For $u=x+y$ we have $u=\bar{u}$.
Let us assume that $w\bar{w}=\bar{w}w$. Then $w\bar{w}u=w\bar{w}x+w\bar{w}y=\bar{w}wx+\bar{w}wy=\bar{w}w\bar{u}$.

For a word $u$ in the sum $w\bar{w}$, abbreviated  as $u\in w\bar{w}$, let $u|x$ and $u|y$ denote the sub-word consisting of $x_i$ and $y_i$ only.

Every word $u\in [x_1,\dots,x_s,y_1,\dots, y_s]_+^*$ can be written as an alternating sequence of $x_i$- or $y_i$-sub-words, i.e. $u=v_1v_2\dots v_{\ell}$ with either $v_i\in [x_1,\dots, x_s]^*_+$ or $v_i\in [y_1,\dots, y_s]^*_+$, $1\leq\ell\leq|u|$.

For $I\subset\{1,\dots,m\}$ and $u=v_1\cdots v_m$, we define, analogously as previously, the restriction to $I$ as $u|I:=v_{i_1}\cdots v_{i_{|I|}}$ with $i_j\in I$.

Let $X_w,Y_w$ be commuting variables indexed by $w\in[s]^*_+$, and set ${X}:=(X_w)_{w\in[s]^*_+}$ and ${Y}:=(Y_w)_{w\in[s]^*_+}$. We assign to $X_w$ and $Y_w$ the homogeneous degree $\deg(X_w):=\deg(w)$. To start with, we shall assume, $\deg(w)=|w|$, i.e. $\deg(x_i)=1$, for all $i\in[s]$.
\begin{prop} 
\label{prop:ten_bool}
For tensor and boolean independence, $G_{\bullet_a}:=(k\langle\langle x_1,\dots,x_s\rangle\rangle_+,\star_{\bullet_a},0)$ is an abelian group with unit $0$. 
$G_{\bullet_a}$ is given by a commutative, $|w|$-homogeneous polynomial group law $F_{\bullet_a}(X,Y)$, with 
$$F_{\bullet,a}(f,g)_w=(f\star g)_w$$
for $f,g\in G_{\bullet_a}$, and
\begin{eqnarray*}
F_{\operatorname{T},a}(X,Y)_w&=&X_w+Y_w+\sum_{\substack{u\in w\bar{w}\\ u\neq w,\bar{w}}} X_{u|x}\cdot Y_{u|y}\\
F_{\operatorname{B},a}(X,Y)_w&=&X_{w}+Y_{w}+\sum_{\substack{u\in w\bar{w}\\  u\neq w,\bar{w}}} (\prod_{\substack{v_{\epsilon_i}\subset u\\i,\epsilon_i=1}} X_{v_{\epsilon_i}})\cdot(\prod_{\substack{v_{\epsilon_i}\subset u\\i,\epsilon_i=2} }Y_{v_{\epsilon_i}})
\end{eqnarray*}
\end{prop}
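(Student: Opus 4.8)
The plan is to reduce the entire statement to the explicit coordinate formula for $\star_{\bullet,a}$ and then read off all three assertions—the polynomial $|w|$-homogeneous shape, the group axioms, and commutativity—from the combinatorial form of that formula. Throughout I work with moment-type functionals normalized by $f(\emptyset)=1$, so that $G_{\bullet,a}$ is coordinatized by the remaining coefficients $X_w:=f_w$ for $w\in[s]^*_+$, its unit $0$ being the series all of whose nontrivial coefficients vanish.

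First I would unwind the convolution. Since $\Delta_a$ is an algebra morphism with $\Delta_a(x_i)=\iota_1(x_i)+\iota_2(x_i)$, for a word $w=x_{i_1}\cdots x_{i_m}$ one gets $\Delta_a(w)=\prod_j(x_{i_j}+y_{i_j})=w\bar w$, the symmetric sum already introduced, so that $(f\star_{\bullet,a}g)(w)=(f\bullet g)(\Delta_a w)=\sum_{u\in w\bar w}(f\bullet g)(u)$. I then evaluate $(f\bullet_{\operatorname{T}}g)(u)$ and $(f\bullet_{\operatorname{B}}g)(u)$ by Definition~\ref{coord-prod}, using the alternating decomposition $u=v_1\cdots v_\ell$ into maximal $x$- and $y$-subwords: the tensor product collapses all $x$-letters and all $y$-letters, giving $f(u|x)\,g(u|y)$, while the boolean product keeps them factorized, giving $\prod_{v_{\epsilon_i}\subset u,\,\epsilon_i=1}f(v_{\epsilon_i})\cdot\prod_{v_{\epsilon_i}\subset u,\,\epsilon_i=2}g(v_{\epsilon_i})$. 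Splitting the sum over $w\bar w$ into the pure term $u=w$, the pure term $u=\bar w$, and the genuinely mixed terms, and using $f(\emptyset)=g(\emptyset)=1$ to discard empty factors, the two pure terms contribute $X_w+Y_w$ and the mixed terms reproduce exactly the stated sums; this establishes $F_{\bullet,a}(f,g)_w=(f\star g)_w$ together with the displayed formulas for $F_{\operatorname{T},a}$ and $F_{\operatorname{B},a}$.

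Next I record the structural features. The set $w\bar w$ has exactly $2^{|w|}$ words, so each $F_{\bullet,a}(X,Y)_w$ is a finite sum, hence a polynomial; and every $u\in w\bar w$ satisfies $|u|=|w|$ with $|u|x|+|u|y|=|w|$, so under $\deg(X_w)=\deg(Y_w)=|w|$ every monomial has total degree $|w|$, giving $|w|$-homogeneity. The crucial observation, which I would isolate explicitly, is that for a mixed $u$ both $u|x$ and $u|y$ are nonempty, whence their lengths (and each boolean factor length) are strictly less than $|w|$; therefore the mixed part of $F_{\bullet,a}(X,Y)_w$ involves only coordinates $X_v,Y_v$ with $|v|<|w|$, while $X_w,Y_w$ occur solely through the linear term $X_w+Y_w$. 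This triangular, unipotent shape drives the group axioms. Setting $Y=0$ kills every mixed term, so $F_{\bullet,a}(X,0)_w=X_w$ and symmetrically $F_{\bullet,a}(0,Y)_w=Y_w$, making $0$ a two-sided unit; associativity is inherited from Proposition~\ref{conv_thm}; and commutativity follows from $\tau_{12}\circ\Delta_a=\Delta_a$ together with the commutativity of the tensor and boolean products, equivalently from the involution $u\mapsto\bar u$ on $w\bar w$, which interchanges the $X$- and $Y$-monomials and yields $F_{\bullet,a}(X,Y)_w=F_{\bullet,a}(Y,X)_w$ directly. For inverses I solve $F_{\bullet,a}(X,Y)_w=0$ for $Y_w$ recursively on $|w|$: at length one there are no mixed terms and $Y_{x_i}=-X_{x_i}$, while at length $n$ the equation reads $X_w+Y_w+(\text{terms in }X_v,Y_v,\ |v|<n)=0$, so $Y_w$ is uniquely determined by data already fixed.

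The main obstacle I expect is purely bookkeeping: handling the constant-term normalization so that the pure words $w,\bar w$ yield precisely the linear part $X_w+Y_w$ and nothing spurious, and matching the boolean product's factorized evaluation to the nested products in the stated formula. The conceptual content—homogeneity, commutativity, and invertibility—then falls out of the single observation that every mixed term involves only strictly shorter subwords.
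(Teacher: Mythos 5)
Your proposal is correct and follows essentially the same route as the paper's proof: expand $\Delta_a(w)=w\bar w$, read the explicit formulas off Definition~\ref{coord-prod}, get homogeneity from $\deg(u)=|w|$ for every $u\in w\bar w$, associativity from Proposition~\ref{conv_thm}, and commutativity from the symmetry $w\bar w=\bar w w$. The only difference is presentational: where the paper cites the general theory of formal group laws for the existence of inverses, you carry out the standard degree-by-degree recursion explicitly, which is just the proof of that cited fact.
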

\begin{prop} 
\label{prop:monoton}
For monotone and anti-monotone independence, $G_{\bullet_a}$ is a non-abelian group and it is given by a polynomial group law $F_{\bullet_a}(X,Y)$ which is $|w|$-homogeneous. 
\begin{eqnarray*}
F_{\operatorname{M},a}(X,Y)_w&=&X_w+Y_w+\sum_{\substack{u\in w\bar{w}\\ u\neq w,\bar{w}}} X_{u|x}\cdot (\prod_{\substack{v_{\epsilon_i}\subset u\\i,\epsilon_i=2} }Y_{v_{\epsilon_i}})\\
F_{\operatorname{aM},a}(X,Y)_w&=&X_{w}+Y_{w}+\sum_{\substack{u\in w\bar{w}\\  u\neq w,\bar{w}}} (\prod_{\substack{v_{\epsilon_i}\subset u\\i,\epsilon_i=1}} X_{v_{\epsilon_i}})\cdot Y_{u|y}
\end{eqnarray*}
\end{prop}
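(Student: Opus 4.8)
The plan is to read the group law straight off the definition $f\star_{\bullet,a}g=(f\bullet g)\circ\Delta_a$, following the template of Proposition~\ref{prop:ten_bool}, and to postpone the group axioms until the closed form is in hand. Since $\Delta_a$ is an algebra morphism, $\Delta_a(x_w)=w\bar w=\prod_j(x_{i_j}+y_{i_j})$, so the coefficient of $x_w$ in $f\star_{\bullet,a}g$ is the finite sum $(f\bullet g)(w\bar w)=\sum_{u\in w\bar w}(f\bullet g)(u)$ over the $2^{|w|}$ words $u$. First I would decompose each $u$ into its maximal monochromatic blocks $u=v_1\cdots v_\ell$, which realises $u$ as an element of the free product with one tensor factor per block and with label $\epsilon_i\in\{1,2\}$ recording whether $v_i$ is an $x$- or a $y$-block. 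Feeding this into the monotone product of Definition~\ref{coord-prod} gives $(f\bullet_{\operatorname{M}}g)(u)=f(u|x)\prod_{i:\epsilon_i=2}g(v_i)$, since $\prod_{i:\epsilon_i=1}v_i$ is exactly the $x$-subword $u|x$; the anti-monotone product gives the block-mirror expression $\prod_{i:\epsilon_i=1}f(v_i)\,g(u|y)$. Replacing each evaluation $f(v)$ by the formal variable $X_v$ and $g(v)$ by $Y_v$, with the convention $X_\emptyset=Y_\emptyset=1$ dictated by the normalisation of moment series, turns every summand into a monomial, and the two extreme words $u=w$ and $u=\bar w$ contribute the leading $X_w$ and $Y_w$; the remaining words reproduce the stated sums. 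This makes $F_{\bullet,a}$ manifestly polynomial, and $|w|$-homogeneous because $|u|=|w|$ for all $u\in w\bar w$, so each monomial has degree $|u|=|w|$, the lengths of $u|x$ and of the $y$-blocks partitioning the letters of $u$.

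Next I would check the group axioms. Associativity is inherited from the monoid structure via axiom UP1. For the unit I would not invoke Proposition~\ref{conv_thm}, which only furnishes a one-sided neutral element for the Muraki products, but instead specialise the closed form: setting every $Y_v$ with $v\neq\emptyset$ to zero kills $Y_w$ and every mixed term of $F_{\operatorname{M},a}$, because each such term carries a factor $Y_{v_i}$ with $v_i$ a nonempty $y$-block, leaving $X_w$; symmetrically, setting every $X_v$ with $v\neq\emptyset$ to zero kills every non-leading term through the factor $X_{u|x}$, which is nonempty whenever $u\neq\bar w$, leaving $Y_w$. Thus $0$ is a genuine two-sided unit, the symmetry $w\bar w=\bar w w$ of $\Delta_a$ compensating for the noncommutativity of the product. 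For inverses I would use the unipotent shape $F_{\bullet,a}(X,Y)_w=X_w+Y_w+(\text{terms in }X_{u'},Y_{v'}\text{ with }|u'|,|v'|<|w|)$: given $f$, the equations $F_{\bullet,a}(f,g)_w=0$ can be solved for the coefficients $g_w$ by recursion on $|w|$, the length-one equations giving $g_{x_i}=-f_{x_i}$ and each longer equation determining $g_w$ from already known shorter coefficients; the same recursion solves $F_{\bullet,a}(h,f)=0$, and associativity forces $g=h$, so every element is invertible and $G_{\bullet_a}$ is a group.

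To see that the group is non-abelian I would exhibit a word on which $F_{\operatorname{M},a}(X,Y)$ and $F_{\operatorname{M},a}(Y,X)$ disagree. The shortest occurs at length three: for $w=x_1x_2x_3$ the intermediate word $u=x_1y_2x_3$ merges the two separated $x$-blocks into the single factor $X_{x_1x_3}$, producing the monomial $X_{x_1x_3}Y_{x_2}$, whereas in the opposite order the corresponding word keeps them apart and yields $X_{x_1}X_{x_3}Y_{x_2}$; these monomials are distinct, so the two orderings differ. The anti-monotone statement follows verbatim after interchanging the roles of $X$ and $Y$, using that $\bullet_{\operatorname{aM}}$ is the block-mirror of $\bullet_{\operatorname{M}}$. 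I expect the unit to be the real obstacle: establishing a two-sided identity in the presence of the genuinely noncommutative monotone product forces one to exploit the special symmetry of $\Delta_a$ rather than the generic argument of Proposition~\ref{conv_thm}, and to handle the empty blocks carefully enough that the leading term comes out cleanly as $X_w+Y_w$.
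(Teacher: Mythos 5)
Your proposal is correct and follows essentially the same route as the paper: the paper's proof reads the group law off Definition~\ref{coord-prod} applied to $\Delta_a(w)=w\bar{w}$ (``analogous'' to Proposition~\ref{prop:ten_bool}, whence associativity from the convolution monoid, neutral element $0$, inverse from the formal-group-law structure), and detects non-commutativity from exactly your length-three example, where the term $X_{i_1i_3}Y_{i_2}$ versus its split counterpart breaks the $x_i\leftrightarrow y_i$ symmetry. Your explicit two-sided-unit check and the triangular recursion for inverses simply spell out the details the paper compresses into its appeal to Proposition~\ref{conv_thm} and to the standard theory of formal group laws.
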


\begin{proof}[Proof of Proposition~{\ref{prop:ten_bool}}]
For $w=(x_{i_1}\dots x_{i_m})$ we have
\begin{eqnarray*}
\Delta_a(w)&=&\Delta_a(x_{i_1})\cdots\Delta_a(x_{i_m})=(x_{i_1}+y_{i_m})\cdots(x_{i_m}+y_{i_m})\\
&=&x_{i_1}\dots x_{i_m}+y_{i_1}\dots y_{i_m}+\{\text{mixed words in $x_i$ and $y_j$}\}\\
&=&w\bar{w}
\end{eqnarray*}
which as a whole is symmetric in $x_i$ and $y_i$. Further, for every summand $u\in w\bar{w}$ we have $\deg(u)=|w|=m$.

Associativity  of the convolution $(f\star_{\bullet,a}g)\star_{\bullet,a}h=f\star_{\bullet,a}(g\star_{\bullet_a} h)$ implies associativity of the corresponding formal group law 
$ F(F(X,Y),Z)=F(X,F(Y,Z))$. As the neutral element is $0$ for $\star_{\bullet,a}$ it  gives $F(X,0)=X=F(0,X)$.

The existence of the inverse follows from the properties of a formal group law,~cf. e.g.~\cite{Haz}. 

The expressions for $F_{\bullet_a}(X,Y)_w$ follow directly from~Definition~\ref{coord-prod}. The symmetry in the commuting variables $X$ and $Y$ follows from $w\bar{w}=\bar{w}w$. 
\end{proof}
\begin{proof}[Proof of Proposition~{\ref{prop:monoton}}]
The demonstration for the group laws is analogous to the previous one. The non-commutativity follows from considering e.g. $w=x_{i_1}x_{i_2}x_{i_3}$ and the fact that the terms $X_{i_1i_3}Y_{i_2}$ and $X_{i_2}Y_{i_1}Y_{i_3}$ break the $x_i\leftrightarrow y_i$ symmetry. 
\end{proof}
\begin{prop}
\label{prop:free}
$(k\langle\langle x_1,\dots, x_s\rangle\rangle_+,\boxplus_V,0)$ is an abelian group. The commutative group law 
$$
F_{\operatorname{F},a}({X},{Y})_w=X_{w}+Y_{w}+\sum_{\substack{u\in w\bar{w}\\  u\neq w,\bar{w}}} (f\bullet_{\operatorname{F}} g)(u)
$$
is given by homogeneous polynomials of degree $|w|$.
\end{prop}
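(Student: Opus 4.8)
The plan is to mirror the proofs of Propositions~\ref{prop:ten_bool} and~\ref{prop:monoton}, isolating the one genuinely new feature: the \emph{recursive} definition of the free product $\bullet_{\operatorname{F}}$, which is the sole reason that polynomiality and homogeneity are not immediate from Definition~\ref{coord-prod}. The group axioms themselves come for free from the machinery already in place. Associativity of $\boxplus_V=(-\bullet_{\operatorname{F}}-)\circ\Delta_a$ is exactly the associativity of $\star_{\bullet}$ established in Proposition~\ref{conv_thm} (via UP1), which translates into the formal-group-law identity $F(F(X,Y),Z)=F(X,F(Y,Z))$. Commutativity, hence that the group is abelian, follows by combining two facts: the free product is commutative, $f\bullet_{\operatorname{F}}g=(g\bullet_{\operatorname{F}}f)\circ\tau_{12}$, and $\Delta_a$ is cocommutative, $\tau_{12}\circ\Delta_a=\Delta_a$, because $w\bar w=\bar w w$ (shown in the proof of Proposition~\ref{prop:ten_bool}).

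Next I would extract the coefficient formula. As computed in the proof of Proposition~\ref{prop:ten_bool}, one has $\Delta_a(x_w)=w\bar w=\sum_{u\in w\bar w}u$, every summand $u$ having length $m=|w|$, so that $F_{\operatorname{F},a}(X,Y)_w=(f\bullet_{\operatorname{F}}g)(w\bar w)=\sum_{u\in w\bar w}(f\bullet_{\operatorname{F}}g)(u)$. The two ``pure'' summands $u=w$ and $u=\bar w$ contribute $X_w$ and $Y_w$ respectively by UP2, that is $(f\bullet_{\operatorname{F}}g)\circ\iota_1=f$ and $(f\bullet_{\operatorname{F}}g)\circ\iota_2=g$, which leaves precisely the claimed mixed sum. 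The neutral element is then $0$: since every mixed term carries a factor $Y_v$ with $v$ nonempty, the formula gives $F_{\operatorname{F},a}(X,0)=X=F_{\operatorname{F},a}(0,X)$, consistent with the unit $0$ of Proposition~\ref{conv_thm} in $\mathbf{Alg}_k$, and the inverse exists by the standard theory of formal group laws over a $\Q$-algebra, cf.~\cite{Haz}.

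The substance is to show each mixed term $(f\bullet_{\operatorname{F}}g)(u)$ is a polynomial in the coefficient variables that is homogeneous of total weight $|u|=m$, where $X_v,Y_v$ carry weight $|v|$. I would write $u=v_1\cdots v_{\ell}$ as its maximal monochromatic blocks, so that each $v_j$ is a single element of one copy of $k\langle x_1,\dots,x_s\rangle$, and argue by induction on $\ell$ using the recursion of Definition~\ref{coord-prod}. In the base case $\ell=1$ the value is a single coefficient $X_{v_1}$ or $Y_{v_1}$, of weight $|v_1|=m$. For the inductive step each summand over a proper subset $I\subsetneq\{1,\dots,\ell\}$ equals $(f\bullet_{\operatorname{F}}g)(\prod_{i\in I}v_i)$ times $\prod_{i\notin I}\varphi_{\epsilon_i}(v_i)$: the second factor has weight $\sum_{i\notin I}|v_i|$, while the recursive factor, applied to a strictly shorter word, has weight $\sum_{i\in I}|v_i|$ by the inductive hypothesis, so each product, and hence the whole sum, is homogeneous of weight $m$. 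Termination of the recursion on proper subsets simultaneously gives finiteness, i.e. polynomiality.

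The main obstacle I anticipate is precisely the bookkeeping inside this recursion, because forming $\prod_{i\in I}v_i$ can merge adjacent same-coloured blocks (the ``contractions'' noted after the definition of the free product). One must check that such merges leave the total weight unchanged, which holds because $\deg$ is additive under the multiplication of $k\langle x_1,\dots,x_s\rangle$; once this is secured the degree count is stable across the recursion and the homogeneity claim closes. The remaining commutative-group and polynomial-group-law assertions then follow exactly as in the preceding propositions.
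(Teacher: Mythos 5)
Your proposal is correct and takes essentially the same route as the paper: induction on the length of the alternating block decomposition $u=v_1\cdots v_\ell$ through the recursive definition of $\bullet_{\operatorname{F}}$, checking that each summand (including those where contractions merge adjacent blocks, using additivity of $\deg$) is homogeneous of total degree $|w|$, with the group axioms inherited from the convolution monoid and the inverse supplied by general formal-group-law theory. The only cosmetic difference is your commutativity argument via $f\bullet_{\operatorname{F}}g=(g\bullet_{\operatorname{F}}f)\circ\tau_{12}$ together with cocommutativity of $\Delta_a$ (from $w\bar{w}=\bar{w}w$), which is a cleaner packaging of what the paper dispatches with ``commutativity can be shown again by induction.''
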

\begin{proof}
We proceed by induction over the length of the alternating sequence. Let
$u_n:=v_{1}v_{2}\dots v_{n}$, with $\deg(u_n)=\deg(v_{1})+\deg(v_{2})+\cdots+\deg(v_{n})$.

For 
$u_4=v_1v_2v_3v_4$ we have
$$
f\bullet_{\operatorname{F}} g(v_1v_2v_3v_4)=f_{v_1v_3}g_{v_2}g_{v_4}+f_{v_1}f_{v_3}g_{v_2v_4}-f_{v_1}f_{v_3}g_{v_2}g_{v_4}
$$
and each summand is homogeneous of degree $\deg(u_4)=\deg(v_1)+\cdots+\deg(v_4)$.

For $n\rightarrow n+1$, let $u_{n+1}=v_{1}v_{2}\cdots v_{n} v_{{n+1}}$ and $I\subset\{1,\dots,n,n+1\}$ with $|I|=n$. The maximal expressions, i.e. without any contractions, are $v_1v_2\dots v_n$ and $v_2v_3\dots v_{n+1}$.
For these we have 
$$
\deg(\underbrace{f\bullet_{\operatorname{F}} g(v_1v_2\dots v_n)}_{=\deg(v_1)+\cdots+\deg(v_n)} \underbrace{\varphi_{\epsilon_{n+1}}(v_{n+1})}_{=\deg(v_{n+1})})=\deg(u_{n+1})
$$
where $\varphi_{\epsilon_{n+1}}=f$, for $\epsilon_{n+1}=1$, and $\varphi_{\epsilon_{n+1}}=g$, for $\epsilon_{n+1}=2$, and similarly for $\deg((f\bullet_{\operatorname{F}} g)(v_2v_3\dots v_{n+1})\varphi_{\epsilon_1}(v_1))=\deg(u_{n+1})$. For the other subsets $I$ with $|I|\leq n$ and which give non-maximal expressions, we have by the induction hypothesis, 
$$
\deg(f\bullet_{\operatorname{F}} g (u_{n+1}|I)\prod_{i\notin I}\varphi_{\epsilon_i}(v_i))=\sum_{i\in I}\deg(v_i)+\sum_{i\notin I}\deg(v_i)=\deg(u_{n+1})
$$

Commutativity can be shown again by induction. 

Finally, the existence of the inverse follows again from the definition of a formal group.
\end{proof}
\subsubsection{Multiplicative convolution of independent moments}
We recall that $\varepsilon_m=\underline{1}:=(1,1,1,\dots)$, cf.~(\ref{H_0_multiplicative}). and we use $w=(x_{i_1}\dots x_{i_m})$.
\begin{prop}
$(R\langle\langle x_1,\dots,x_s\rangle\rangle_1,\star_{\operatorname{T},m},\varepsilon_m)$ is an abelian monoid with unit $\varepsilon_m$ and component-wise multiplication, i.e. 
$$
(f\star_{\operatorname{T},m} g)_w=f_w\cdot g_w.
$$
\end{prop}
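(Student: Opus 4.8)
The plan is to reduce the entire statement to one coefficient identity: as soon as we establish $(f\star_{\operatorname{T},m}g)_w=f_w\cdot g_w$ for every word $w$, the monoid axioms together with commutativity drop out from the corresponding properties of the commutative coefficient ring $R$. Thus the only substantive step is to evaluate a single coefficient of the tensor convolution taken against the multiplicative coproduct $\Delta_m$.

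First I would compute the image of a word under $\Delta_m$. As already recorded when $k\langle\underline{x}\rangle_m$ was shown to be an $H_0$-algebra, for $w=x_{i_1}\cdots x_{i_m}$ one has $\Delta_m(w)=\Delta_m(x_{i_1})\cdots\Delta_m(x_{i_m})=x_{i_1}y_{i_1}\cdots x_{i_m}y_{i_m}$, with $x_i=\iota_1(x_i)$ and $y_i=\iota_2(x_i)$. The decisive observation is that this is already a reduced \emph{alternating} word in the (amalgamated) free product: the letters strictly alternate between the first factor (the $x$'s) and the second factor (the $y$'s), so no contraction occurs and the associated sign sequence is $\epsilon=(1,2,1,2,\dots,1,2)$ of length $2m$. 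I would then apply the tensor product of Definition~\ref{coord-prod}, which factorises $(f\bullet_{\operatorname{T}}g)$ on such a word as $f$ of the ordered product of the $\epsilon_i=1$ letters times $g$ of the ordered product of the $\epsilon_i=2$ letters. The first product is $x_{i_1}\cdots x_{i_m}=w$, contributing $f_w$; the second is $y_{i_1}\cdots y_{i_m}$, which under the identification of the second factor with $R\langle x_1,\dots,x_s\rangle$ via $y_j\leftrightarrow x_j$ is again $w$, contributing $g_w$. This yields the claimed formula $(f\star_{\operatorname{T},m}g)_w=f_w\,g_w$.

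With the component-wise formula established, the remaining assertions are formal. Closure in $R\langle\langle x_1,\dots,x_s\rangle\rangle_1$ follows from $(f\star_{\operatorname{T},m}g)_\emptyset=f_\emptyset g_\emptyset=1$; associativity from $((f\star g)\star h)_w=(f_wg_w)h_w=f_w(g_wh_w)=(f\star(g\star h))_w$, i.e. from associativity in $R$; and commutativity from $f_wg_w=g_wf_w$, using that $R$ is commutative. For the unit I note that $\varepsilon_m$, being the unital morphism sending each $x_i$ to $1_R$, has $(\varepsilon_m)_w=\varepsilon_m(x_{i_1})\cdots\varepsilon_m(x_{i_m})=1$ for all $w$, whence $(\varepsilon_m\star_{\operatorname{T},m}f)_w=1\cdot f_w=f_w$ and symmetrically on the right; alternatively the unit and associativity are already granted by Proposition~\ref{conv_thm}.

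The computation presents no genuine obstacle; what requires care is purely bookkeeping. One must check that $\Delta_m(w)$ never places two letters from the same factor side by side — guaranteed here precisely by the strict alternation of $x$'s and $y$'s — so that the alternating-word formula of Definition~\ref{coord-prod} applies verbatim and the tensor product separates cleanly into an $x$-part and a $y$-part. The subsequent identification of the $y$-product with $w$ via $y_j\leftrightarrow x_j$ is what turns the second factor into $g_w$ rather than a different functional, and it is exactly the commutativity of $R$ that promotes the resulting Hadamard-type multiplication to an \emph{abelian} monoid.
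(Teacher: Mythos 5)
Your proposal is correct and follows essentially the same route as the paper: the paper's proof is precisely the one-line computation $f\star_{\operatorname{T},m}g(w)=f\bullet_{\operatorname{T}}g(x_{i_1}y_{i_1}\cdots x_{i_m}y_{i_m})=f_w\cdot g_w$, which you carry out in detail by noting the strict alternation of the letters and applying the coordinate formula for $\bullet_{\operatorname{T}}$. The additional bookkeeping you supply (closure, associativity, unit, and commutativity inherited from $R$) is left implicit in the paper but is exactly the intended justification.
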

\begin{proof}
The statement follows directly from 
$f\star_{\operatorname{T},m}g(w)=f\bullet_{\operatorname{T}}g(x_{i_1}y_{i_1}\cdots x_{i_m}y_{i_m})=f_w\cdot g_w$.
\end{proof}
\begin{prop}
$(R\langle\langle x_1,\dots,x_s\rangle\rangle_1,\circ,\underline{1})$ with $\circ=\star_{\operatorname{M},m}$ or $=\star_{\operatorname{aM},m}$, is a non-commutative semi-group (magma) with right, respectively left unit $\underline{1}$. The $\circ$-multi\-plication is given component-wise, i.e. 
\begin{eqnarray*}
(f\star_{\operatorname{M},m} g)_w=f_w\cdot g_{i_1}\cdots g_{i_m},\\
(f\star_{\operatorname{aM},m} g)_w=f_{i_1}\cdots f_{i_m}\cdot g_w.
\end{eqnarray*}
\end{prop}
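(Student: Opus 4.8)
The plan is to reduce everything to a single coordinate computation, mirroring the tensor case treated in the preceding proposition, and then to read off each structural assertion from the resulting closed formula. First I would record that, since $\Delta_m(x_i) = \iota_1(x_i)\iota_2(x_i) = x_i y_i$ by~(\ref{H_0_multiplicative}), a word $w = x_{i_1}\cdots x_{i_m}$ satisfies $\Delta_m(w) = x_{i_1} y_{i_1}\cdots x_{i_m} y_{i_m}$; hence by the definition of the $\bullet$-labelled convolution
$$
(f\star_{\operatorname{M},m} g)_w = (f\bullet_{\operatorname{M}} g)(x_{i_1} y_{i_1}\cdots x_{i_m} y_{i_m}).
$$
The decisive structural observation is that the letters of this word strictly alternate between the two copies, the $x_{i_j}$ carrying the label $\epsilon_i=1$ and the $y_{i_j}$ the label $\epsilon_i=2$; no two consecutive letters then lie in the same algebra, so no contractions occur and the reading of the monotone product is unambiguous.

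Next I would apply the defining formula of Definition~\ref{coord-prod}. For the monotone product the letters with label $1$ are gathered into the single argument of $f$, giving the word $x_{i_1} x_{i_2}\cdots x_{i_m} = w$ and thus $f(\prod_{i;\epsilon_i=1} a_i) = f_w$, while each label-$2$ letter is evaluated separately by $g$, giving $\prod_{i;\epsilon_i=2} g(a_i) = g_{i_1} g_{i_2}\cdots g_{i_m}$; this yields $(f\star_{\operatorname{M},m} g)_w = f_w\, g_{i_1}\cdots g_{i_m}$ at once. The anti-monotone case is the mirror image: there $f$ is applied letterwise and $g$ absorbs the entire $y$-word, producing $(f\star_{\operatorname{aM},m} g)_w = f_{i_1}\cdots f_{i_m}\, g_w$. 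Taking $w = \emptyset$ shows the constant term $1$ is preserved, so both operations are well defined on $R\langle\langle x_1,\dots,x_s\rangle\rangle_1$.

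From these two closed formulas the remaining claims are immediate. Substituting the unit $\underline{1} = \varepsilon_m$, all of whose coefficients equal $1$, gives $(f\star_{\operatorname{M},m}\underline{1})_w = f_w$, so $\underline{1}$ is a right unit, whereas $(\underline{1}\star_{\operatorname{M},m} f)_w = f_{i_1}\cdots f_{i_m}\neq f_w$ in general; the anti-monotone computation is symmetric and singles out $\underline{1}$ as a left unit. Non-commutativity is exhibited already on $w = x_1 x_2$ (for $s\geq 2$), where $(f\star_{\operatorname{M},m} g)_{12} = f_{12}\, g_1 g_2$ differs from $(g\star_{\operatorname{M},m} f)_{12} = g_{12}\, f_1 f_2$ for generic $f,g$. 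Associativity may be imported from Proposition~\ref{conv_thm}, where it follows from axiom \textbf{UP1} together with the coassociativity of $\Delta_m$; alternatively one checks it in coordinates, noting $(g\star_{\operatorname{M},m} h)_{i_j} = g_{i_j} h_{i_j}$ on one-letter words, so that both bracketings collapse to $f_w\, g_{i_1}\cdots g_{i_m}\, h_{i_1}\cdots h_{i_m}$ once the commutativity of the coefficient ring $R$ is used.

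The main obstacle is not depth but bookkeeping: the one genuinely delicate step is reading the monotone product correctly on the specific alternating word $\Delta_m(w)$ — verifying that $f$ receives the full $x$-word $w$ while $g$ is split across the individual $y$-letters, and the reverse for the anti-monotone product. It is exactly this left/right asymmetry that forces the one-sided unit and the failure of commutativity, so confirming the absence of contractions in the alternating word is the point that makes the whole computation unambiguous.
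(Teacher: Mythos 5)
Your proof is correct and takes essentially the same route the paper intends: the paper states this proposition without an explicit proof, immediately after the tensor case whose one-line proof is precisely this coordinate computation, namely evaluating $f\bullet g$ on $\Delta_m(w)=x_{i_1}y_{i_1}\cdots x_{i_m}y_{i_m}$ and reading off the result from Definition~\ref{coord-prod}, which is exactly what you do, together with the unit, associativity and non-commutativity checks the authors leave implicit. One cosmetic remark: your non-commutativity witness $w=x_1x_2$ requires $s\geq 2$; for $s=1$ the word $w=x_1x_1$ serves the same purpose, since $(f\star_{\operatorname{M},m} g)_{11}=f_{11}g_1g_1$ and $(g\star_{\operatorname{M},m} f)_{11}=g_{11}f_1f_1$ differ for generic $f,g$.
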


We consider now the multiplicative convolution of moments in the case of free independence.
The following identities, which one obtains by applying the recursive definition, will be useful later:
\begin{eqnarray}
f\bullet_{\operatorname{F}} g(x_{i_1}y_{i_1}x_{i_2}y_{i_2}x_{i_3})&=&f_{i_1i_2i_3}g_{i_1}g_{i_2}+f_{i_1i_3}f_{i_2}g_{i_1i_2}-f_{i_1i_3}f_{i_2}g_{i_1}g_{i_2}\\
\label{3_free_prod}
f\bullet_{\operatorname{F}} g(x_{i_1}y_{i_1}x_{i_2}y_{i_2}x_{i_3}y_{i_3})&=&f_{i_1i_2i_3}g_{i_1}g_{i_2}g_{i_3}+f_{i_1}f_{i_2}f_{i_3}g_{i_1i_2i_3}+f_{i_1i_3}f_{i_2}g_{i_1i_2}g_{i_3}\\\nonumber
&&-f_{i_1i_3}f_{i_2}g_{i_1}g_{i_2}g_{i_3}+f_{i_1i_2}f_{i_3}g_{i_1}g_{i_2i_3}-f_{i_1}f_{i_2}f_{i_3}g_{i_1}g_{i_2i_3}\\\nonumber
&&-f_{i_1}f_{i_2i_3}g_{i_1}g_{i_2}g_{i_3}-f_{i_1}f_{i_2}f_{i_3}g_{i_1i_2}g_{i_3}-f_{i_1i_2}f_{i_3}g_{i_1}g_{i_2}g_{i_3}\\\nonumber
&&+f_{i_1}f_{i_2i_3}g_{i_1i_3}g_{i_2}-f_{i_1}f_{i_2}f_{i_3}g_{i_1i_3}g_{i_2}
+2f_{i_1}f_{i_2}f_{i_3}g_{i_1}g_{i_2}g_{i_3}
\end{eqnarray}

\begin{prop}
$(k\langle\langle x_1,\dots,x_s\rangle\rangle_1, \boxtimes_V,\underline{1})$ is a multiplicative monoid. For $s=1$, it is abelian and for $s\geq2$, in general, non-abelian.
\end{prop}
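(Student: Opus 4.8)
The monoid structure is a direct corollary of Proposition~\ref{conv_thm}. The plan is first to identify $k\langle\langle x_1,\dots,x_s\rangle\rangle_1$ with the unital functionals $\Hom_{k,1}(k\langle x_1,\dots,x_s\rangle,k)$ through the canonical isomorphism $k\langle\langle x_1,\dots,x_s\rangle\rangle\cong(k\langle x_1,\dots,x_s\rangle)^*$: a power series has constant term $1$ exactly when the corresponding functional sends $1\mapsto 1$. Under this identification $\boxtimes_V=(-\bullet_{\operatorname{F}}-)\circ\Delta_m$ is the $\bullet_{\operatorname{F}}$-labelled convolution attached to the unital $H_0$-algebra $k\langle\underline{x}\rangle_m$. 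Since free independence is one of the two cases for which Proposition~\ref{conv_thm} yields a two-sided unit, I would conclude at once that $(k\langle\langle x_1,\dots,x_s\rangle\rangle_1,\boxtimes_V,\varepsilon_m)$ is an associative monoid, with $\varepsilon_m=\underline{1}$ the constant-$1$ series by~(\ref{H_0_multiplicative}); closure (the convolution of two unital functionals is again unital) is part of the same statement.

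The two commutativity claims both reduce, via the commutativity~(\ref{commutative_prod}) of the free product, to a statement about $\bullet_{\operatorname{F}}$ on a single fully alternating word. Writing $x=\iota_1$ and $y=\iota_2$ on each letter, one has $\Delta_m(x_{i_1}\cdots x_{i_m})=x_{i_1}y_{i_1}\cdots x_{i_m}y_{i_m}$, and since $g\bullet_{\operatorname{F}} f=(f\bullet_{\operatorname{F}} g)\circ\tau_{12}$ with $\tau_{12}$ interchanging $x_i\leftrightarrow y_i$, I obtain
$$
(f\boxtimes_V g)_w=(f\bullet_{\operatorname{F}} g)(x_{i_1}y_{i_1}\cdots x_{i_m}y_{i_m}),\qquad
(g\boxtimes_V f)_w=(f\bullet_{\operatorname{F}} g)(y_{i_1}x_{i_1}\cdots y_{i_m}x_{i_m}).
$$
Thus $\boxtimes_V$ is commutative on the coefficient of $w$ precisely when $f\bullet_{\operatorname{F}} g$ agrees on these two words, equivalently when its expansion is invariant under the simultaneous swap $f\leftrightarrow g$.

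For $s=1$ all indices collapse ($i_j\equiv 1$), so $(f\bullet_{\operatorname{F}} g)((xy)^n)$ depends on $f,g$ only through the scalar moments $(f_k)$ and $(g_k)$. The plan here is to prove that this expression is symmetric under $f\leftrightarrow g$ by induction on $n$ from the recursive definition of $\bullet_{\operatorname{F}}$; the conceptual reason is the Kreweras-complement duality in the non-crossing expansion of $\phi((ab)^n)$, which pairs the $f$-contribution of a partition with the $g$-contribution of its complement, and for one variable these contributions depend only on block sizes. The case $n=3$ is already visible in~(\ref{3_free_prod}): setting all indices equal collapses it to $f_3g_1^3+f_1^3g_3+3f_1f_2g_1g_2-3f_1f_2g_1^3-3f_1^3g_1g_2+2f_1^3g_1^3$, which is manifestly symmetric. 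This single-variable symmetry for all $n$ is the main obstacle, and is where I would either run the induction carefully or invoke the classical commutativity of free multiplicative convolution (Voiculescu's $S$-transform, Nica--Speicher).

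For $s\geq 2$ the symmetry breaks, and it suffices to exhibit one coefficient on which it fails. I would take $w=x_1x_2x_1$, so that $\Delta_m(w)=x_1y_1x_2y_2x_1y_1$ and~(\ref{3_free_prod}) applies with $(i_1,i_2,i_3)=(1,2,1)$. The term $f_{i_1i_3}f_{i_2}\,g_{i_1i_2}g_{i_3}=f_{11}f_2\,g_{12}g_1$ occurs exactly once in $(f\boxtimes_V g)_w$, while in $(g\boxtimes_V f)_w$ (the $f\leftrightarrow g$ swap) the terms carrying the factor $f_{11}f_2$ are attached to $g_1g_{21}$ and $g_1g_2g_1$, which are distinct monomials from $g_{12}g_1$. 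Hence the monomial $f_{11}f_2g_{12}g_1$ survives in the difference $(f\boxtimes_V g)_w-(g\boxtimes_V f)_w$ with coefficient $1$, so that difference is a non-zero polynomial in the moments. Since the moments of a unital functional on the free algebra are unconstrained, I can choose $f,g$ realising a non-zero value, giving $f\boxtimes_V g\neq g\boxtimes_V f$. This establishes that $\boxtimes_V$ is abelian for $s=1$ and non-abelian for $s\geq 2$, as claimed.
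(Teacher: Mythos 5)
Your proposal is correct and follows essentially the same route as the paper: the monoid structure is deduced from Proposition~\ref{conv_thm}, commutativity for $s=1$ is argued by induction (left at the same level of detail in both), and non-commutativity for $s\geq2$ is exhibited on the word $x_1x_2x_1$ via the asymmetry of~(\ref{3_free_prod}). Your isolation of the surviving monomial $f_{11}f_2g_{12}g_1$ is just a more explicit version of the paper's generic-choice argument ($f_a=f_b=g_a=g_b=1$, $g_{aa}\neq g_{ab}\neq g_{ba}$).
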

\begin{proof}
The monoid structure is a direct consequence of~Proposition~\ref{conv_thm}. Equation~(\ref{3_free_prod}) is not symmetric with respect to generic choices of $f$ and $g$. Consider the word $aba$ and let $f_a=f_b=g_a=g_b=1$ and $g_{aa}\neq g_{ab}\neq g_{ba}$. 

Commutativity for $s=1$ is shown by induction. 
\end{proof}
The generalisation of~D. Voiculescu's results~\cite{V85,V87b} for moments to higher dimensions is given by
\begin{prop}
\label{mult_group_law}
$(\mathfrak{G}^s(k),\boxtimes_V,\underline{1})$ is multiplicative group. The group law is given by universal polynomials $Q_w(X,Y)$ with integer coefficients and which satisfy: 
\begin{itemize}
\item $Q_{x_i}(X,Y)=X_i Y_i$, for $i\in[s]$.
\item $Q_w(X,Y)$ is homogenous of degree $|w|$ in both the $X$- and $Y$-variables.
\item $Q_w(X,Y)=X_{w}Y_{i_1}\cdots Y_{i_m}+X_{i_1}\cdots X_{i_m}Y_{w}+\tilde{Q}_w(X_u,Y_v: |u|,|v|<|w|)$ for $|w|\geq2$.
\end{itemize}
The inverse $f^{-1}$ of $f$, with respect to $\boxtimes_V$, is recursively given by  
\begin{equation}
\label{inverse}
f^{-1}_w={f^{-1}_{i_1}\cdots f^{-1}_{i_m}}\left(1-{f_w}{f^{-1}_{i_1}\cdots f^{-1}_{i_m}}-\tilde{Q}_w(f_u,f^{-1}_v,|u|,|v|<|w|)\right).
\end{equation}
\end{prop}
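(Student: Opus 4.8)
The plan is to read off the coordinates of $\boxtimes_V=(-\bullet_{\operatorname{F}}-)\circ\Delta_m$ and to verify the three listed properties directly, and then to let the resulting triangular shape of the group law dictate the construction of the inverse. First I would fix $w=x_{i_1}\cdots x_{i_m}$ and use $\Delta_m(w)=x_{i_1}y_{i_1}\cdots x_{i_m}y_{i_m}$ to write $(f\boxtimes_V g)_w=(f\bullet_{\operatorname{F}}g)(x_{i_1}y_{i_1}\cdots x_{i_m}y_{i_m})$. Unfolding the recursive definition of $\bullet_{\operatorname{F}}$ (Definition~\ref{coord-prod}) expresses this as a polynomial in the coefficients $f_u,g_v$; setting $X_u:=f_u$, $Y_v:=g_v$ defines $Q_w(X,Y)$, and since the only scalars entering the recursion are the signs $(-1)^{m-|I|+1}\in\{\pm1\}$, an induction on $|w|$ shows $Q_w\in\Z[X_u,Y_v]$. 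The very same recursion defines the product over any $R\in\mathbf{cAlg}_k$, which is what \emph{universal} means here.

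Next I would check the three properties. The identity $Q_{x_i}(X,Y)=X_iY_i$ is a direct application of Definition~\ref{coord-prod} to the two-letter word $x_iy_i$ (the three proper subsets $\emptyset,\{1\},\{2\}$ contribute $-f_ig_i+f_ig_i+f_ig_i=f_ig_i$). For bi-homogeneity I would observe that $\Delta_m(w)$ carries exactly $m$ letters of type $x$ and $m$ of type $y$; in every monomial of the free-product expansion $f$ is evaluated on the $x$-blocks and $g$ on the $y$-blocks, so with $\deg X_u=|u|$ and $\deg Y_v=|v|$ the $X$-degree and the $Y$-degree of each monomial equal $m=|w|$. For the leading terms I would isolate the two monomials in which one functional sees the entire length-$m$ word, namely $f_w\,g_{i_1}\cdots g_{i_m}=X_wY_{i_1}\cdots Y_{i_m}$ (all $x$'s grouped, all $y$'s singletons) and $f_{i_1}\cdots f_{i_m}\,g_w=X_{i_1}\cdots X_{i_m}Y_w$, and prove by induction on $m$ that $X_w$ and $Y_w$ occur in no other monomial; this is anchored on the length-two case computed in the proof of Proposition~\ref{prop:free} and on the explicit expansion~(\ref{3_free_prod}), both of which exhibit exactly these two maximal (contraction-free) terms plus a remainder in strictly shorter words. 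That remainder is $\tilde{Q}_w(X_u,Y_v:|u|,|v|<|w|)$.

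For the group structure, associativity and the two-sided unit $\underline{1}$ are supplied by Proposition~\ref{conv_thm}, and $\mathfrak{G}^s(k)$ is closed under $\boxtimes_V$ because $(f\boxtimes_V g)_{x_i}=f_ig_i\in k^{\times}$. To produce the inverse I would solve $(f\boxtimes_V g)_w=1$ for $g$ degree by degree. In degree one $f_ig_i=1$ forces $g_i=f_i^{-1}$, which is available precisely because $f_i\in k^{\times}$ — the defining condition of $\mathfrak{G}^s$. In degree $m\ge2$ the third property isolates the unknown $g_w$ with the invertible coefficient $f_{i_1}\cdots f_{i_m}$, so $g_w=(f_{i_1}\cdots f_{i_m})^{-1}\bigl(1-f_w\,g_{i_1}\cdots g_{i_m}-\tilde{Q}_w\bigr)$; substituting the degree-one values $g_{i_j}=f_{i_j}^{-1}$ and using commutativity of $k$ gives exactly~(\ref{inverse}). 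This recursion produces a right inverse $g$ for every $f\in\mathfrak{G}^s(k)$; since in a monoid the existence of a right inverse for every element forces it to be a group, $g=f^{-1}$ is two-sided and $(\mathfrak{G}^s(k),\boxtimes_V,\underline{1})$ is a group.

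The main obstacle is the third property: pinning down that $X_w$ and $Y_w$ appear only in the two displayed monomials, each with coefficient $1$, and nowhere inside $\tilde{Q}_w$. This is exactly what makes the equation for $g_w$ linear with the invertible coefficient $f_{i_1}\cdots f_{i_m}$, hence solvable, so the delicate bookkeeping is to show that no grouping in the free-product recursion other than the two extreme ones can produce a full-length $f$- or $g$-factor; everything else is the formal-group and monoid formalism already established.
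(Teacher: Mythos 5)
Your overall route is the paper's route: expand $(f\boxtimes_V g)_w=(f\bullet_{\operatorname{F}}g)(x_{i_1}y_{i_1}\cdots x_{i_m}y_{i_m})$, extract universal integer-coefficient polynomials, verify the degree-one case and bi-homogeneity, and let the triangular shape of $Q_w$ drive the degree-by-degree construction of the inverse inside the monoid supplied by Proposition~\ref{conv_thm}; the closure of $\mathfrak{G}^s(k)$, the base-case computation, and the right-inverse-in-a-monoid argument are all fine. However, the one statement that carries the whole proposition --- that $X_w$ and $Y_w$ occur \emph{only} in the two extreme monomials, each with coefficient $1$ --- is exactly what you leave unproved: you call it ``the main obstacle'' and propose an induction on $m$ anchored on the length-two case and on~(\ref{3_free_prod}), without giving the inductive step. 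This is a genuine gap, not a formality, and the induction as you set it up would not close as stated: when one unwinds the recursion of Definition~\ref{coord-prod}, the subwords $\prod_{i\in I}a_i$ contract to \emph{general} alternating block words (merged $x$-blocks separated by single $y$'s, and so on), not to words of the special shape $x_{j_1}y_{j_1}\cdots x_{j_r}y_{j_r}$, so an inductive hypothesis formulated only for that special shape does not apply to the recursive terms; it would have to be strengthened to arbitrary alternating words, and that strengthening is precisely the bookkeeping you deferred.

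The paper settles the point with a direct parity observation, which is the idea your proposal is missing. A factor $g_{i_ji_k}$ (any $g$-factor of length at least $2$) can arise only if the $y$'s merging into it become adjacent, i.e.\ only if some intervening $x_{i_\ell}$ is left out of that block; but then $x_{i_\ell}$ is evaluated separately by $f$, and the factor $f_w$, which requires \emph{all} the $x$'s to be merged into one evaluation, cannot occur in the same monomial. Hence every monomial containing $f_w$ is of the form $\pm f_w g_{i_1}\cdots g_{i_m}$, and symmetrically for $g_w$. The coefficient is then pinned down by the unit axiom: since $\underline{1}$ is the $\boxtimes_V$-unit, the signed multiples of $f_wg_{i_1}\cdots g_{i_m}$ must add up to exactly one copy. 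With these two points supplied, your recursion for $f^{-1}$ and the rest of your argument go through verbatim.
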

The first two expressions for the inverse~(\ref{inverse}) are
\begin{eqnarray*}
f^{-1}_{i}&=&\frac{1}{f_i},\\
f^{-1}_{i_1i_2}&=&\frac{1}{f_{i_1}f_{i_2}}\left(2-\frac{f_{i_1i_2}}{f_{i_1}f_{i_2}}\right).
\end{eqnarray*}

\begin{proof}
We have to unwind the recursive definition of the universal free product.

In every step we have words $w\cdot_{\operatorname{H}}\bar{w}=x_{i_1}y_{i_1}x_{i_2}y_{i_2}\dots x_{i_m}y_{i_m}$ of length $2m$ as input. 

For the value of the index $I=m$, the expression splits into $w$ and $\bar{w}$ such that we obtain $f\bullet_{\operatorname{F}}g(w)\prod_{j=1}^m g(y_{i_j})=f_w\prod_{j=1}^m g_{i_j}$ and symmetrically $f\bullet_{\operatorname{F}}g(\bar{w})\prod_{j=1}^m f({x_{i_j}})=g_w\prod_{j=1}^m f_{i_j}$ by using the universal property UP2 in Definition~\ref{df:up}.

As the unit is $\underline{1}$, the number $\nu$ of $f_w$'s or $g_w$'s in the sum has to be odd with alternating signs.

We show that $\nu=1$.

Let us assume that $\nu\geq3$. There is no term of the form $f_wg_{i_ji_k}g_{...}$ so every term which contains $f_w$ has to be equal to $f_wg_{i_1}\cdots g_{i_m}$. Namely, in order to have a $g_{i_ii_k}$, because of the alternation of $x$- and $y$-variables, one of the $x_{i_j}$ has to be left out, but this is not compatible with the requirement of having 
$f_w$, i.e. to use all the $x_{i_j}$.
Therefore in the alternating sum just one term remains.

Hence $\tilde{Q}_w(f_u,f^{-1}_u,|u|<|w|)$ contains no terms of the form $f_wg_{i_ji_k}g_{\dots}$ and so the recursive procedure to solve for $g_w$ is well-defined. 
\end{proof}
\begin{prop}
 For $f,g\in R\langle\langle x_1,\dots, x_s\rangle\rangle_1$ we have:
\begin{eqnarray*}
(f\boxtimes_V g)_w&=&f_w g_{i_1}\cdots g_{i_{|w|}}+f_{i_1}\cdots f_{i_{|w|}}g_w+(-1)^{|w|-1}\operatorname{C}_{|w-1|}\cdot f_{i_1}\cdots f_{i_{|w|}} g_{i_1}\cdots g_{i_{|w|}}\\
&&+\{\text{mixed terms in $f_u$ and $g_v$ with $2\leq|u|<|w|$ or $2\leq|v|<|w|$}\}, 
\end{eqnarray*}
where $\operatorname{C}_{|w|}$ is the $|w|$th {\bf Catalan} number.
\end{prop}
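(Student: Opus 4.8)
The plan is to read off the coefficient of each monomial in $(f\boxtimes_V g)_w = f\bullet_{\operatorname{F}}g(x_{i_1}y_{i_1}\cdots x_{i_m}y_{i_m})$, using that $\Delta_m(w)=x_{i_1}y_{i_1}\cdots x_{i_m}y_{i_m}$, and to match the three displayed terms against the universal polynomials $Q_w(X,Y)$ of Proposition~\ref{mult_group_law}, which compute $(f\boxtimes_V g)_w$ as a polynomial in the coefficients of $f$ and $g$. Since by that proposition $Q_w(X,Y)=X_w Y_{i_1}\cdots Y_{i_m}+X_{i_1}\cdots X_{i_m}Y_w+\tilde{Q}_w$, the first two summands give the terms $f_w g_{i_1}\cdots g_{i_{|w|}}$ and $f_{i_1}\cdots f_{i_{|w|}}g_w$ with coefficient $1$ for free. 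Every monomial of $Q_w$ is $|w|$-homogeneous in both the $X$- and the $Y$-variables, hence of the form $\prod_k X_{u_k}\prod_l Y_{v_l}$ with $\sum_k|u_k|=\sum_l|v_l|=|w|$. This gives a clean trichotomy: either some $u_k$ or $v_l$ has length $|w|$, which by the no-skipping argument already used in the proof of Proposition~\ref{mult_group_law} forces one of the two boundary terms; or all factors are singletons, giving the fully-factored term; or some factor has length in $[2,|w|-1]$, which is exactly a mixed term in the stated sense. Only the fully-factored coefficient then remains to be pinned down.

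The crux is therefore the coefficient $c_m$ of the fully-factored monomial $f_{i_1}\cdots f_{i_m}g_{i_1}\cdots g_{i_m}$, which by the universality and homogeneity of $Q_w$ is an integer depending only on $m=|w|$. To isolate it I would specialise to the simplest case: take $s=1$, $w=x^m$, and the linear functionals $f,g$ with $f_1=g_1=1$ and $f_k=g_k=0$ for $k\ge 2$. Under this choice every monomial of $Q_{x^m}$ that contains a moment of length $\ge 2$ vanishes — in particular both boundary terms, since $f_{x^m}=g_{x^m}=0$ for $m\ge 2$ — so that $(f\boxtimes_V g)_{x^m}=c_m$.

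At $s=1$ the convolution $\boxtimes_V$ is exactly Voiculescu's free multiplicative convolution, so I would finish by linearising it with the $S$-transform. From $\psi_f(z)=z$ one gets $S_f(z)=S_g(z)=1+z$, hence $S_{f\boxtimes_V g}(z)=(1+z)^2$; inverting yields $\psi_{f\boxtimes_V g}^{-1}(z)=z(1+z)$ and therefore $\psi_{f\boxtimes_V g}(w)=\tfrac12\left(\sqrt{1+4w}-1\right)=\sum_{n\ge 1}(-1)^{n-1}\operatorname{C}_{n-1}\,w^n$, whose $m$-th coefficient is $c_m=(-1)^{m-1}\operatorname{C}_{m-1}$, as claimed; everything else is then a mixed term by the trichotomy. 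The main obstacle is precisely this identification of $c_m$ with a signed Catalan number. If one prefers an argument that does not invoke the classical $S$-transform, one can instead unwind the recursive definition of $\bullet_{\operatorname{F}}$ under the same specialisation: only non-contracting (parity-alternating) subsets survive, producing a recursion $t_\ell=\sum_{r<\ell}(-1)^{\ell-r+1}N(\ell,r)\,t_r$, where $N(\ell,r)$ counts the parity-alternating $r$-subsets of $[\ell]$ and $c_m=t_{2m}$; the work then lies in solving this recursion, whose generating function again satisfies the quadratic Catalan equation.
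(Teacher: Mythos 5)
Your proof is correct, but the way you pin down the Catalan coefficient is genuinely different from the paper's. The paper deduces the proposition from Proposition~\ref{mult_group_law} together with Proposition~\ref{box_V_rel_box}: writing $f\boxtimes_V g=f\boxtimes\operatorname{Moeb}\boxtimes\, g$ and chasing the fully-factored monomial through the two boxed convolutions (outer partition $1_{|w|}$, inner partition $0_{|w|}$) shows that its coefficient is exactly the Moebius series value $\operatorname{Moeb}_w=(-1)^{|w|-1}\operatorname{C}_{|w|-1}$, i.e.\ the Moebius function of the non-crossing partition lattice, and the boundary and mixed terms fall out of the same bookkeeping. You use the same structural input (Proposition~\ref{mult_group_law}: homogeneity in both variable sets, the no-skipping argument) for the boundary terms and the trichotomy, but you determine the fully-factored coefficient $c_m$ by universality plus specialisation: since the free-product recursion is positional, $c_m$ depends only on $m=|w|$, so it may be computed at $s=1$ with moments $1,1,0,0,\dots$, where Voiculescu's $S$-transform gives $S_f=S_g=1+z$, hence $\psi_{f\boxtimes_V g}(w)=\frac{1}{2}\left(\sqrt{1+4w}-1\right)=\sum_{n\ge 1}(-1)^{n-1}\operatorname{C}_{n-1}w^n$ and $c_m=(-1)^{m-1}\operatorname{C}_{m-1}$. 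Both routes are sound: the paper's stays inside its own combinatorial machinery and explains the Catalan number conceptually as a Moebius function value, with no appeal to transforms; yours bypasses non-crossing partitions and $\boxtimes$ entirely, at the price of importing Voiculescu's classical one-dimensional theorem (cited in the paper as background, so there is no circularity), and the specialisation trick that reduces a multi-dimensional universal coefficient to a one-variable generating-function computation is an attractive economy. If you write this up, make explicit the one point you use silently: under the specialisation no monomial containing a factor $X_u$ or $Y_v$ with $|u|,|v|\ge 2$ can collapse onto $X_{i_1}\cdots X_{i_m}Y_{i_1}\cdots Y_{i_m}$, because lengths of factors are preserved when letters are identified; this is what makes the coefficient extraction legitimate.
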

\begin{proof}
This a consequence of Theorem~\ref{mult_group_law} and Proposition~\ref{box_V_rel_box}.
\end{proof}
The following formula gives a decomposition of the multiplicative free convolution and shows that it is a linear superposition of the multiplicative Boolean, monotone and anti-monotone convolutions plus mixed terms.
\begin{thm}
\label{can-decomp}
The multiplicative free convolution of moments has the following canonical decomposition: 
\begin{eqnarray*}
(f\boxtimes_V g)_w&=&(f\star_{\operatorname{M},m} g)_w+(f\star_{\operatorname{aM},m} g)_w+(-1)^{|w|-1}\operatorname{C}_{|w-1|}\cdot (f\star_{\operatorname{B},m} g)_w\\
& &+\{\text{mixed terms in $f_u$ and $g_v$ with $2\leq|u|<|w|$ or $2\leq|v|<|w|$}\}, 
\end{eqnarray*}
where $\operatorname{C}_{|w|}$ is the $|w|$th {\bf Catalan} number.
\end{thm}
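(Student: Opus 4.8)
The plan is to read the claimed decomposition as a termwise reinterpretation of the explicit expansion of $(f\boxtimes_V g)_w$ furnished by the preceding Proposition. That expansion already exhibits three distinguished leading contributions,
$$
f_w\, g_{i_1}\cdots g_{i_{|w|}},\qquad f_{i_1}\cdots f_{i_{|w|}}\, g_w,\qquad (-1)^{|w|-1}\operatorname{C}_{|w-1|}\, f_{i_1}\cdots f_{i_{|w|}}\, g_{i_1}\cdots g_{i_{|w|}},
$$
together with the same collection of mixed terms that appears on the right-hand side of the Theorem. So the entire content of the proof is to identify these three leading terms with the three convolutions $\star_{\operatorname{M},m}$, $\star_{\operatorname{aM},m}$ and $\star_{\operatorname{B},m}$, after which the mixed remainders coincide verbatim.

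Two of these identifications are immediate from results already established. The component-wise formulae for the multiplicative monotone and anti-monotone convolutions give $(f\star_{\operatorname{M},m} g)_w = f_w\, g_{i_1}\cdots g_{i_{|w|}}$ and $(f\star_{\operatorname{aM},m} g)_w = f_{i_1}\cdots f_{i_{|w|}}\, g_w$, which are exactly the first two leading terms. The only remaining computation is the multiplicative Boolean convolution, which I would carry out directly: since $(f\star_{\operatorname{B},m} g)_w = (f\bullet_{\operatorname{B}} g)\circ\Delta_m(w)$ and $\Delta_m(w)=x_{i_1}y_{i_1}\cdots x_{i_m}y_{i_m}$, the coordinate formula for $\bullet_{\operatorname{B}}$ in Definition~\ref{coord-prod} assigns a factor $f$ to each $x$-letter and a factor $g$ to each $y$-letter, yielding $(f\star_{\operatorname{B},m} g)_w = f_{i_1}\cdots f_{i_{|w|}}\, g_{i_1}\cdots g_{i_{|w|}}$. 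Substituting these three identities back into the expansion matches the Theorem term for term.

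The step that warrants the most care -- though it is still routine -- is verifying that the Boolean evaluation on the alternating string $\Delta_m(w)$ genuinely collapses to the fully factorised singleton product with no surviving higher contractions, and symmetrically that the monotone and anti-monotone evaluations isolate precisely the single long factor $f_w$ (respectively $g_w$) against a product of degree-one factors. Both facts are read off directly from the coordinate definitions in Definition~\ref{coord-prod}, so there is no genuine obstacle: the Theorem is essentially a relabelling of the preceding Proposition once the three multiplicative convolutions have been evaluated in coordinates.
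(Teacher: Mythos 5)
Your proposal is correct and matches the paper's (implicit) argument: the theorem is presented there as a direct reformulation of the preceding proposition on $(f\boxtimes_V g)_w$, obtained by recognising the three leading terms as the monotone, anti-monotone and Boolean multiplicative convolutions via their component-wise formulae. Your explicit verification that $(f\star_{\operatorname{B},m} g)_w = f_{i_1}\cdots f_{i_{|w|}}\, g_{i_1}\cdots g_{i_{|w|}}$, which the paper never states but tacitly uses, fills in the only step the paper leaves unwritten.
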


\section{Homogeneous formal groups}
We start with the outline of a generalisation of the theory of real homogenous Lie groups ~\cite{BLU,Ste}. We show that it naturally embeds into the theory of unipotent affine group schemes and formal groups.  For simplicity, we restrict ourselves to homogeneous Lie groups with integer weights and fields of characteristic zero.

The fundamental fact underlying this section is {\bf Lazard's Theorem}~\cite{Laz}, which states that an affine algebraic group is (pro)-unipotent. Dually, a  connected (pro)-unipotent algebraic group is an affine variety. 

References for this section are~\cite{BLU,CG,D72,Haz,Mil_AG,Ste}.

Let $k$ be a field of characteristic zero, $R\in\mathbf{cAlg}_k$ a commutative $k$-algebra and $N\in\N^*$ a fixed positive integer. We use $X=(X_1,\dots, X_N)$ and $Y=(Y_1,\dots, Y_N)$ as sets of commuting variables. 
\begin{df}
Let $G:=G(R):=(R^N,\circ)$ have the structure of a (Lie) group. The group $G(R)$ is called a  {\bf  homogeneous} (Lie) group of {\bf integer weight}, if it satisfies the following conditions:
\begin{itemize}
\item there exists an $N$-tuple of positive integers  $\sigma=(\sigma_1,\dots,\sigma_N)$ with $1\leq\sigma_1\leq\dots\leq\sigma_n$, such that 
\item every {\bf scaling operator}  $\delta_{\lambda^{\sigma}}:R^N\rightarrow R^N$, $\lambda\in R\setminus\{0\}$, given by 
$$
\delta_{\lambda^{\sigma}}(X_1,\dots, X_N):=(\lambda^{\sigma_1}X_1,\dots,\lambda^{\sigma_N} X_N),
$$ 
defines an automorphism of $G$.
\end{itemize}
\end{df}
We denote by $(R^N,\circ,\delta_{\lambda^{\sigma}})$ a homogenous (Lie) group, as defined above.
\begin{prop}
The family of scalings $\{\delta_{\lambda^{\sigma}}\}_{\lambda\neq 0}$ defines a commutative monoid of automorphisms of $G(R)$, i.e. for all $X,Y\in R^N$ we have $\delta_{\lambda^{\sigma}}(X\circ Y)=(\delta_{\lambda^{\sigma}} X)\circ(\delta_{\lambda^{\sigma}} Y)$,  
$\delta_{\lambda_1^{\sigma}}\circ\delta_{\lambda_2^{\sigma}}=\delta_{\lambda_1^{\sigma}\cdot\lambda_2^{\sigma}}$ and $\delta_{1_R}$ as the identity.
\end{prop}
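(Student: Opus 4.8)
The plan is to verify the three asserted properties of the family $\{\delta_{\lambda^\sigma}\}_{\lambda\neq 0}$ directly from the definitions, treating each claim in turn. The statement has three parts: that each $\delta_{\lambda^\sigma}$ is a group automorphism, that the composition law $\delta_{\lambda_1^\sigma}\circ\delta_{\lambda_2^\sigma}=\delta_{(\lambda_1\lambda_2)^\sigma}$ holds, and that $\delta_{1_R}$ is the identity. The first of these is essentially immediate: the defining property of a \emph{homogeneous} (Lie) group already \emph{asserts} that every $\delta_{\lambda^\sigma}$, for $\lambda\in R\setminus\{0\}$, is an automorphism of $G$, which is precisely the homomorphism identity $\delta_{\lambda^\sigma}(X\circ Y)=(\delta_{\lambda^\sigma}X)\circ(\delta_{\lambda^\sigma}Y)$ together with bijectivity. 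So the real content of the proposition is the monoid structure of the family itself.

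For the composition law I would compute coordinate-wise. For a fixed index $j\in\{1,\dots,N\}$, the $j$-th component of $\delta_{\lambda_1^\sigma}\circ\delta_{\lambda_2^\sigma}$ applied to $X=(X_1,\dots,X_N)$ is obtained by first scaling the $j$-th coordinate by $\lambda_2^{\sigma_j}$ and then by $\lambda_1^{\sigma_j}$, yielding $\lambda_1^{\sigma_j}\lambda_2^{\sigma_j}X_j=(\lambda_1\lambda_2)^{\sigma_j}X_j$, where the last equality uses commutativity of $R$ so that $\lambda_1^{\sigma_j}\lambda_2^{\sigma_j}=(\lambda_1\lambda_2)^{\sigma_j}$. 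Since this holds for every coordinate $j$, we get $\delta_{\lambda_1^\sigma}\circ\delta_{\lambda_2^\sigma}=\delta_{(\lambda_1\lambda_2)^\sigma}$. The identity element is then read off by setting $\lambda=1_R$: since $1_R^{\sigma_j}=1_R$ for each $j$, the map $\delta_{1_R}$ acts as $X_j\mapsto 1_R\cdot X_j=X_j$, hence is the identity on $R^N$. Commutativity of the monoid, i.e. $\delta_{\lambda_1^\sigma}\circ\delta_{\lambda_2^\sigma}=\delta_{\lambda_2^\sigma}\circ\delta_{\lambda_1^\sigma}$, follows at once from $\lambda_1\lambda_2=\lambda_2\lambda_1$ in the commutative ring $R$.

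There is no genuine obstacle here; the statement is a formal bookkeeping exercise once the definition of the scaling operator is in hand, and the only ingredient beyond unwinding notation is the commutativity of $R$, which makes $\lambda\mapsto\lambda^{\sigma_j}$ behave multiplicatively. The one point deserving a word of care is that the maps are genuinely closed under composition within the family, i.e. that the product $\lambda_1\lambda_2$ is again a legitimate scaling parameter; since $R$ is an integral domain over $k$ (as fixed in the conventions) and $\lambda_1,\lambda_2\neq 0$, the product $\lambda_1\lambda_2$ is again nonzero, so $\delta_{(\lambda_1\lambda_2)^\sigma}$ belongs to the family. Note that one should say \emph{monoid} rather than \emph{group}: the scaling parameters range over $R\setminus\{0\}$ under multiplication, which need not contain inverses unless $R$ is a field, so inverses of the scalings need not lie in the family.
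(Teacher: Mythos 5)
Your proof is correct and supplies exactly the routine verification that the paper omits: the proposition is stated there without any proof, the homomorphism property being built into the definition of a homogeneous group and the monoid laws being the obvious coordinate-wise computation $\lambda_1^{\sigma_j}\lambda_2^{\sigma_j}=(\lambda_1\lambda_2)^{\sigma_j}$ together with $1_R^{\sigma_j}=1_R$. Your closing remark on closure under composition is a genuine point of care worth keeping (note only that in this section the paper takes $R\in\mathbf{cAlg}_k$, not necessarily an integral domain, so absence of zero divisors among the scaling parameters is an implicit hypothesis rather than something guaranteed by the stated conventions; it holds in the case $R=k$ to which the paper mostly restricts).
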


The statements in~[\cite{BLU}, Chapter 1.3], can naturally be embedded into the theory of formal groups. 
\begin{thm}
Every homogeneous (Lie) group structure $(\circ, \delta_{\lambda^{\sigma}})$ on $R^N$ is given by a {\bf formal group law} $F(X,Y)=(F_1(X,Y),\dots,F_N(X,Y))$ over $R$, which satisfies
\begin{enumerate}
\item  $F_j(X,Y)\in R[X,Y]$, for $j=1,\dots, N$, i.e. it is polynomial, 
\item $F_j(X,Y)=X_j+Y_j+Q_j(X_1,\dots, X_{\ell(j)},Y_1,\dots, Y_{\ell(j)})$ where $Q_j$ is a polynomial function containing only mixed $X$- and $Y$-monomials; and the maximal possible index $\ell(j)$ satisfies  $\ell(j)\leq j-1$ and $\sigma_{\ell(j)}<\sigma_j$, for $j=2,\dots, N$.
\item $F_j$ is $\delta_{\lambda^{\sigma}}$-homogeneous of degree $\sigma_j$, i.e. $F_j(\delta_{\lambda^{\sigma}} (X),\delta_{\lambda^{\sigma}}(Y))=\lambda^{\sigma_j}F_j(X,Y)$,  and therefore $Q_j(\delta_{\lambda^{\sigma}}X,\delta_{\lambda^{\sigma}}Y)=\lambda^{\sigma_j}Q_j(X,Y)$, for all $\lambda\neq0$.
\item The Lie algebra $L(F)$ corresponding to the group law $F(X,Y)$ is nilpotent of step $\leq\sigma_N$. 
\end{enumerate}
\end{thm}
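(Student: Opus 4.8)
The plan is to read the formal group law off the group structure, use the dilation symmetry to force polynomiality and the stated triangular shape, and finally grade the Lie algebra to obtain nilpotency. First I would pin down the neutral element: since each $\delta_{\lambda^{\sigma}}$ is an automorphism it fixes the identity $e$, but $\delta_{\lambda^{\sigma}}$ fixes a common point for all $\lambda\neq0$ only at the origin, because $\sigma_i\geq1$ makes $\lambda^{\sigma_i}e_i=e_i$ force $e_i=0$; hence $e=0$. Writing the product as $X\circ Y=F(X,Y)=(F_1,\dots,F_N)$, the group axioms translate directly into the formal-group-law axioms: associativity of $\circ$ gives $F(F(X,Y),Z)=F(X,F(Y,Z))$, and the unit axiom gives $F(X,0)=X=F(0,X)$, the formal inverse then being automatic. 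Property (3) is nothing but the automorphism identity $\delta_{\lambda^{\sigma}}(X\circ Y)=(\delta_{\lambda^{\sigma}}X)\circ(\delta_{\lambda^{\sigma}}Y)$ read componentwise, namely $F_j(\delta_{\lambda^{\sigma}}X,\delta_{\lambda^{\sigma}}Y)=\lambda^{\sigma_j}F_j(X,Y)$.

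For polynomiality (1) I would expand each $F_j$ into its power series and compare weights, assigning $X_i$ and $Y_i$ the weight $\sigma_i$. By (3), $F_j$ is weighted-homogeneous of degree $\sigma_j$, so only monomials $X^{\alpha}Y^{\beta}$ with $\sum_i\sigma_i(\alpha_i+\beta_i)=\sigma_j$ can occur; since every $\sigma_i\geq1$, such a monomial has ordinary total degree at most $\sigma_j$, so there are only finitely many and $F_j$ is a polynomial. This is the step that genuinely needs the \emph{a priori} regularity of the group law --- the real-analyticity established in [\cite{BLU}, Chapter~1.3] --- and I expect it to be the main obstacle: without homogeneity a Lie-group law is merely analytic, and it is precisely the anisotropic dilation symmetry that collapses the Taylor series to a finite weighted-homogeneous polynomial, after which the same universal polynomials define $F$ over an arbitrary $R$.

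For the triangular form (2) I would exploit the unit axiom. Setting $Y=0$ annihilates every monomial containing a $Y$-variable, so the $Y$-free part of $F_j$ equals $F_j(X,0)=X_j$, and symmetrically the $X$-free part is $Y_j$; thus $F_j=X_j+Y_j+Q_j$ with $Q_j$ a sum of genuinely mixed monomials. If such a monomial contains a factor $X_a$ and a factor $Y_b$, its weighted degree is at least $\sigma_a+\sigma_b\geq\sigma_a+1$ and equals $\sigma_j$, forcing $\sigma_a<\sigma_j$ and likewise $\sigma_b<\sigma_j$. As the weights are nondecreasing this gives $a<j$ and $b<j$ for every index occurring in $Q_j$; taking $\ell(j)$ to be the largest such index yields $\ell(j)\leq j-1$ and $\sigma_{\ell(j)}<\sigma_j$.

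Finally, for nilpotency (4) I would transport the grading to $L(F)$, whose bracket is the antisymmetrised bilinear part of $F$. Placing the coordinate generators $e_1,\dots,e_N$ in degrees $\sigma_1,\dots,\sigma_N$, the dilations make $L(F)$ a graded Lie algebra in which $[e_a,e_b]$ is supported in degree $\sigma_a+\sigma_b$, since the bilinear term $X_aY_b$ can feed only into those $F_k$ with $\sigma_k=\sigma_a+\sigma_b$. Hence an iterated bracket of $m$ generators lies in degree at least $m\sigma_1\geq m$. Writing $C^1:=L(F)$ and $C^{k+1}:=[L(F),C^k]$, every element of $C^{\sigma_N+1}$ then has degree at least $\sigma_N+1$, which exceeds the maximal available degree $\sigma_N$; thus $C^{\sigma_N+1}=0$ and $L(F)$ is nilpotent of step $\leq\sigma_N$.
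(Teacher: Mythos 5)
Your proof is correct, but the comparison here is somewhat one-sided: the paper offers no proof of this theorem at all. The statement is placed immediately after the remark that the results of [\cite{BLU}, Chapter 1.3] ``can naturally be embedded into the theory of formal groups'', i.e.\ the authors import the real theory of Bonfiglioli--Lanconelli--Uguzzoni wholesale and leave the verification to the reader. What you have written is essentially a correct reconstruction of that imported argument. Each step checks out: the identity is $0$ because $(\lambda^{\sigma_i}-1)e_i=0$ for all $\lambda$, and already $\lambda=2$ suffices since $2^{\sigma_i}-1$ is invertible in any $\Q$-algebra; the group axioms transcribe verbatim into the formal-group-law axioms; the dilation identity annihilates every Taylor coefficient of weighted degree $\neq\sigma_j$, and since all $\sigma_i\geq1$ there are only finitely many monomials of weighted degree $\sigma_j$, whence polynomiality; setting $Y=0$ (resp.\ $X=0$) isolates the linear terms, and the weight count $\sigma_a+\sigma_b\leq\sigma_j$ with $\sigma_b\geq1$ forces $\sigma_a<\sigma_j$, hence $a\leq j-1$ by the monotonicity of the weights; and the grading $[V_c,V_d]\subseteq V_{c+d}$ that you place on $L(F)$ pushes any bracket of length $\sigma_N+1$ into weights exceeding $\sigma_N$, which are absent, so the step is at most $\sigma_N$. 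You have also put your finger on the one genuine subtlety, which the paper silently elides: polynomiality is \emph{not} a formal consequence of the group and dilation axioms alone --- for a bare group structure on $R^N$ over a general commutative $k$-algebra nothing forces any regularity --- so the theorem must be read exactly as you read it: the smooth real case collapses to universal weighted-homogeneous polynomials via the dilation symmetry, and those same polynomials then define the group law over arbitrary $R$. That caveat, made explicit, is the only thing separating your write-up from a complete proof of the statement as it stands in the paper.
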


For $\alpha=(\alpha_1,\dots,\alpha_N)\in\N^N$ set $|\alpha|_{\sigma}:=\langle \alpha,\sigma\rangle:=\alpha_1\sigma_1+\dots+\alpha_N\sigma_N$ and 
assign $X_j$ the homogeneous degree $|X_j|_{\sigma}:=\sigma_j$. A monomial $X^{\alpha}=X_1^{\alpha_1}\cdots X_N^{\alpha_N}$ has $\sigma$-homogeneous degree $|X^{\alpha}|_{\sigma}=|\alpha|_{\sigma}$. 

Let $A_0(R):=R$ and 
$A_n(R):=\operatorname{span}_R\{X^{\alpha}:\text{$\alpha\in\N^N$ with $|\alpha|_{\sigma}=n$}\}$, and for the empty condition we set $A_n(R):=\{0\}$.

Then $R[X_1,\dots, X_N]=\bigoplus_{n\in\N}A_n$ and $A_{i}\cdot A_j\subset A_{i+j}$. 

On generators $X_j$ the co-product $\Delta$ is given by the formal group law
\begin{equation}
\label{co-prod}
\Delta(X_j):=X_j\otimes1+1\otimes X_j+Q_j(X_1\otimes 1,\dots,  X_{\ell(j)}\otimes 1,1\otimes X_1,\dots, 1\otimes X_{\ell(j)}).
\end{equation}
Then  
$\Delta(X_j)\subset \bigoplus_{i=0}^{\sigma_j} A_i\otimes A_{\sigma_j-i}$ and 
 $\Delta(A_n)\subseteq\bigoplus _{j=0}^{n}A_j\otimes A_{n-j}$ hold.
 
This defines the structure of a graded connected commutative bialgebra, and hence of a commutative Hopf algebra with the antipode given recursively, cf. e.g.~\cite{Man}. 
 \begin{thm}
Let $G(\R)=(\R^N,\circ,\delta_{\lambda^{\sigma}})$ be a real homogeneous Lie group. The group $G(\R)$is representable by a graded connected Hopf algebra $k[X_1,\dots, X_N]$, where $k:=\Q(\alpha_1,\dots,\alpha_{n})$, $\alpha_i\in\R$, is a field extension of $\Q$, and with the co-product given by~(\ref{co-prod}), i.e. 
$$
\spec(k[X_1,\dots,X_N])(\R)=G(\R).
$$
The extension field $\Q(\alpha_1,\dots,\alpha_{n})$ depends on finitely many real coefficients of $G(\R)$.
The associated smooth group scheme is unipotent.
\end{thm}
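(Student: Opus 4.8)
The plan is to extract the group scheme directly from the polynomial formal group law provided by the preceding theorem and to descend its coefficients to a finitely generated subfield of $\R$.

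First I would apply the previous theorem to the data $(\circ,\delta_{\lambda^{\sigma}})$ on $\R^N$, obtaining a formal group law $F(X,Y)=(F_1,\dots,F_N)$ with each $F_j\in\R[X,Y]$ polynomial, of the triangular shape $F_j=X_j+Y_j+Q_j$ and $\delta_{\lambda^{\sigma}}$-homogeneous of degree $\sigma_j$. Each $F_j$ has only finitely many monomials, so the set of all coefficients appearing in $F_1,\dots,F_N$ is finite; listing the irrational ones as $\alpha_1,\dots,\alpha_n$, I set $k:=\Q(\alpha_1,\dots,\alpha_n)\subseteq\R$, a finitely generated extension of $\Q$ over which every $F_j$, and hence every $Q_j$, is defined. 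The co-product~(\ref{co-prod}), the co-unit $X_j\mapsto 0$ and the recursively determined antipode are then all defined over $k$, so the graded connected commutative Hopf algebra structure described just above the theorem lives on $k[X_1,\dots,X_N]$. This already yields the asserted dependence of $k$ on finitely many real coefficients of $G(\R)$.

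For representability I would observe that, since $k\subseteq\R$, the field $\R$ is a $k$-algebra, and a $k$-algebra homomorphism out of the polynomial ring is determined by the images of the generators, so $\spec(k[X_1,\dots,X_N])(\R)=\Hom_{\mathbf{cAlg}_k}(k[X_1,\dots,X_N],\R)=\R^N$ as sets. Under this identification the multiplication induced by $\Delta$ sends $(X,Y)$ to $F(X,Y)$, which is exactly $X\circ Y$ because $F$ is the same polynomial tuple with its coefficients now viewed in $k\subseteq\R$, while the inverse is given by the antipode. Hence the group of $\R$-points is $(\R^N,\circ)=G(\R)$, establishing $\spec(k[X_1,\dots,X_N])(\R)=G(\R)$.

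Finally I would read off unipotency and smoothness from the triangular, nilpotent structure. Smoothness is immediate, the coordinate ring being the polynomial ring $k[X_1,\dots,X_N]$ (so the scheme is affine $N$-space), and in any case every finite-type group scheme in characteristic zero is smooth. For unipotency, part~(2) of the preceding theorem gives $F_j=X_j+Y_j+Q_j$ with $Q_j$ depending only on variables of strictly smaller weight $\sigma_{\ell(j)}<\sigma_j$; grouping coordinates by weight exhibits $G$ as an iterated extension of additive groups $\mathbb{G}_a$, which is the defining property of a connected unipotent group, consistent with part~(4) that $\Lie(G)=L(F)$ is nilpotent of step $\le\sigma_N$ and with Lazard's theorem recalled at the start of the section. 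The one step demanding care is the descent: I must check that inverting and iterating never introduce new transcendentals, but since the antipode and all higher $Q_j$ are polynomial expressions in the $F_j$ with universal integer, hence rational, coefficients, all structure maps remain defined over $k=\Q(\alpha_1,\dots,\alpha_n)$.
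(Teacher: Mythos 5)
Your proposal is correct and follows essentially the same route as the paper, which leaves the proof implicit in the discussion immediately preceding the theorem: the polynomial, triangular, $\delta_{\lambda^{\sigma}}$-homogeneous formal group law from the previous theorem, the weighted grading $A_n$ making $k[X_1,\dots,X_N]$ a graded connected commutative bialgebra (hence Hopf algebra with recursively defined antipode), adjunction of the finitely many coefficients to $\Q$, and unipotency via the graded/triangular structure in line with Lazard's theorem. Your write-up merely makes explicit the steps the paper treats as read (the identification of $\R$-points and the descent of the antipode to $k$), so there is nothing to correct.
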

\begin{rem}
The above is an alternative proof for real homogeneous Lie groups being nilpotent.
In the present context, the coefficients are all integer and so nothing has to be adjoint. 
\end{rem}

For the theory of hypo-elliptic operators and sub-Riemannian geometry, cf. e.g~\cite{BLU}, Carnot groups are of special importance.

\begin{df} 
An {\bf $n$-step Carnot group} $G$ is a simply connected Lie group whose Lie algebra $\mathfrak{g}$ has an {\bf $n$-step stratification}, i.e.
$
\mathfrak{g}=V_1\oplus\dots\oplus V_n
$
such that 
$[V_1,V_j]=V_{j+1}$ for $1\leq j\leq n-1$, and $[V_1,V_n]=\{0\}$.
\end{df}

\begin{df}
We call a smooth formal group {\bf homogeneous} if its co-multiplication is homogeneous with integer weights. We call it {\bf Carnot} if the Lie algebra of the associated formal group law can be stratified. 
\end{df}
The following strict inclusions hold: $\text{Carnot $\subset$ homogeneous $\subset$ unipotent}$.

The notion of {\bf pro-homogeneous} formal groups naturally exists, i.e. the projective limits of homogeneous formal groups, as in every finite degree the operations are polynomial.

\section{Moment-Cumulant formul{\ae}}
\subsection{Boxed convolution and the $\mathcal{R}$-transform}
A. Nica and R. Speicher~\cite{NS} introduced the $n$-dimensional ``boxed" convolution and the $\mathcal{R}$-transform, which uses multi-dimensional {\bf free cumulants}, and which constitutes an extension of R. Speicher's~\cite{Spe97b} previous foundational work. We briefly recall here both concepts in a slightly generalised form. The standard reference for the subject is the book~\cite{NS}. 

Let $R\in\mathbf{cAlg}_k$, $s\in\N$ and $w\in[s]^*$. The {\bf projection} $X_w$ onto the $w$th component is defined by
\begin{equation*}
\label{co-ordinate_map}
X_w : R\langle\langle x_1,\dots, x_s\rangle\rangle \rightarrow  R~, \qquad
f \mapsto  X_w(f):=a_w,
\end{equation*}
which $R$-linearly assigns to a power series $f$ its $w$th coefficient $a_w$. 
Let  $\pi=(V_1,\dots, V_r)\in\operatorname{NC}(n)$ be a non-crossing partition, composed of $r\geq 1$ blocks, with $V_j=(v_1,\dots, v_m)\subseteq[n]$ and ordered as $1\leq v_1<\dots <v_m\leq n$. The {\bf Kreweras complement}~\cite{Kre} of $\pi$ is denoted $K(\pi)$. For a word $w=(i_1\dots i_n)$ the {\bf restriction of $w$ onto $V_j$}, is given by
$$
w|V_j:=(i_1\dots i_n)|V_j:=(i_{v_1}\dots i_{v_m})\in\{1,\dots, s\}^m.
$$

\begin{df}
\label{def:box-proj-op}
Let $f\in R\langle\langle x_1,\dots,x_s\rangle\rangle_+$, $w\in[s]_+^*$ and $\pi\in\operatorname{NC}(n)$, both as above. There exists an operator $X_{w|\pi}:R\langle\langle x_1,\dots,x_s\rangle\rangle_+\rightarrow R$, given by
\begin{equation*}
\label{box-proj-op}
X_{w,\pi}(f):=\prod_{V_j\in\pi}X_{w|V_j}(f)~,
\end{equation*}
with $X_{w|V_j}$ as in~(\ref{co-ordinate_map}), and the product taken in $R$ with respect to $\cdot_R$. The {\bf complementary} operator $X_{w,K(\pi)}$ is obtained by replacing the non-crossing partition $\pi$ by its Kreweras complement $K(\pi)$.
\end{df}
\begin{df}
\label{boxconv_df}
Let $R\in\mathbf{cAlg}_k$. The {\bf boxed convolution} $\boxtimes$, is a binary operation 
\begin{eqnarray*}
\label{boxconv}
R\langle\langle x_1,\dots, x_s\rangle\rangle_+\times R\langle\langle x_1,\dots, x_s\rangle\rangle_+&\rightarrow& R\langle\langle x_1,\dots, x_s\rangle\rangle_+,\\\nonumber
(f,g)&\mapsto&f\boxtimes g,
\end{eqnarray*}
which for every word $w=(i_1,\dots, i_n)$ and $n\in\N^*$ satisfies:
$$
X_{w}(f\boxtimes g)=\sum_{\pi\in\operatorname{NC}(|w|)} X_{w,\pi}(f)\cdot_R X_{w,K(\pi)}(g).
$$
\end{df}

The following result is due to A. Nica and R. Speicher. 
\begin{prop}[\cite{NS}]
$(\mathfrak{G}^s(R),\boxtimes),1_{\mathfrak{G}^s(R)}$ is a group with neutral element
$
1_{\mathfrak{G}^s(R)}=1_Rx_1+\cdots+1_R x_s.
$
For $s=1$ it is abelian and for $s\geq2$ non-abelian.
\end{prop}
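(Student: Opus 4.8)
The plan is to establish the monoid structure first and then promote it to a group. First I would check closure: for a length-one word $w=(i)$ the lattice $\operatorname{NC}(1)$ is a single point, so Definition~\ref{boxconv_df} gives $X_{(i)}(f\boxtimes g)=f_i\cdot_R g_i$, a product of two units of $R$ and hence again a unit, while all coefficients of words of length $\ge 2$ lie in $R$ automatically; thus $f\boxtimes g\in\mathfrak{G}^s(R)$. For the unit $e:=1_{\mathfrak{G}^s(R)}$ one has $e_i=1_R$ and $e_w=0$ for $|w|\ge 2$, so by Definition~\ref{def:box-proj-op} the factor $X_{w,\pi}(e)=\prod_{V\in\pi}e_{w|V}$ is nonzero only when every block of $\pi$ is a singleton, i.e. when $\pi$ is the minimal partition $0_n$, in which case it equals $1_R$. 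Using $K(0_n)=1_n$ and $K(1_n)=0_n$ (with $1_n$ the one-block partition), each of $X_w(f\boxtimes e)$ and $X_w(e\boxtimes f)$ then collapses to the single surviving term $X_{w,1_n}(f)=f_w$, so $e$ is a two-sided identity.

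The associativity $(f\boxtimes g)\boxtimes h=f\boxtimes(g\boxtimes h)$ is the crux, and the step I expect to cost the most work. Expanding both sides via Definition~\ref{boxconv_df} produces sums indexed by a non-crossing partition $\pi\in\operatorname{NC}(n)$ together with, for the inner convolution, a non-crossing partition of each block of $\pi$ (respectively of $K(\pi)$), so that Kreweras complements appear at two nested levels. The plan is to reindex both sums by the partial order on $\operatorname{NC}(n)$: the natural indexing objects are pairs $\sigma\le\tau$ in $\operatorname{NC}(n)$, and the combinatorial input is the compatibility of the Kreweras complement with refinement, encoded by the relative Kreweras complement $K(\sigma,\tau)$ and the order-reversing bijection it furnishes on the interval $[\sigma,\tau]$, together with the multiplicativity $X_{w,\cdot}(f)=\prod_{V}X_{w|V}(f)$ across blocks. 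Under this reindexing each nested sum becomes a single sum over chains $\sigma\le\tau$, with the factors $f$, $g$, $h$ read off from $\sigma$, from the interval $[\sigma,\tau]$, and from the complement of $\tau$; associativity then reduces to the manifest symmetry of this chain sum. The conceptually cleanest packaging is to identify $(\mathfrak{G}^s(R),\boxtimes)$ with the monoid of $R$-valued multiplicative functions on the system $\{\operatorname{NC}(n)\}_n$ under convolution in the associated incidence algebra, where associativity is automatic; the real labour is then entirely in proving that the Kreweras complement realises this identification, i.e. in the relative-complement bijection.

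With associativity and the unit in hand, inverses follow by a triangular recursion on $|w|$. Given $f\in\mathfrak{G}^s(R)$ I would solve $f\boxtimes g=e$ degree by degree: for $|w|=1$ set $g_i:=f_i^{-1}$, which exists and is a unit exactly because $f_i\in R^\times$; for $|w|=n\ge 2$ isolate in $X_w(f\boxtimes g)$ the unique term carrying $g_w$, namely the one with $\pi=0_n$, whose coefficient is $X_{w,0_n}(f)=f_{i_1}\cdots f_{i_n}\in R^\times$, all remaining terms involving only coefficients $g_v$ with $|v|<n$; dividing by this unit determines $g_w$ uniquely. This yields a right inverse, the symmetric computation yields a left inverse, and in a monoid the two coincide, so $(\mathfrak{G}^s(R),\boxtimes)$ is a group. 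Finally, for $s=1$ every coefficient depends only on the word length, so $X_n(f\boxtimes g)=\sum_{\pi\in\operatorname{NC}(n)}f_\pi\,g_{K(\pi)}$ with $f_\pi:=\prod_{V\in\pi}f_{|V|}$ depending only on the block-size multiset of $\pi$; since $K$ is a bijection of $\operatorname{NC}(n)$ and $K^{-1}(\pi)$ is a rotation of $K(\pi)$, hence has the same block sizes, reindexing by $\pi\mapsto K(\pi)$ identifies this with $\sum_{\pi}g_\pi\,f_{K(\pi)}=X_n(g\boxtimes f)$, giving commutativity. For $s\ge 2$ I would exhibit non-commutativity by an explicit choice on a word such as $w=aba$, with coefficients arranged so that the $X_w$-components of $f\boxtimes g$ and $g\boxtimes f$ differ, in the same spirit as the asymmetry of~(\ref{3_free_prod}).
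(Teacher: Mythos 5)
The paper itself contains no proof of this proposition: it is stated as a quotation from Nica--Speicher~\cite{NS}, so there is no internal argument to compare against, and your proposal must be judged against that cited source. What you propose is essentially the Nica--Speicher proof, and the parts you carry out in full are correct: closure (since $X_{(i)}(f\boxtimes g)=f_i\cdot_R g_i\in R^{\times}$ and higher coefficients are unconstrained); the two-sided unit (for $e$ the factor $X_{w,\pi}(e)$ vanishes unless $\pi=0_n$, so in $f\boxtimes e$ only $\pi=1_n$ survives via $K(1_n)=0_n$, and in $e\boxtimes f$ only $\pi=0_n$ survives via $K(0_n)=1_n$, giving $f_w$ both times); the triangular recursion for inverses (the unique term containing $g_w$ has coefficient $f_{i_1}\cdots f_{i_n}\in R^{\times}$, all other terms involve only $g_v$ with $|v|<|w|$); the $s=1$ commutativity, which correctly rests on $K^{-1}(\pi)$ being a rotation of $K(\pi)$ (from $K^2=$ rotation), so that block-size multisets agree and reindexing by $K$ works; and non-commutativity for $s\geq2$, which indeed can be realized on $w=aba$ --- e.g.\ with all linear coefficients $1$, $f_{12}=1$, $g_{21}=1$ and all other quadratic coefficients $0$, the word $x_1x_2x_1$ picks up the term $f_{12}f_1g_1g_{21}=1$ in $f\boxtimes g$ but nothing in $g\boxtimes f$.

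The one substantive step you do not prove is associativity, and you are right that this is where all the work lies. Your outline --- reindexing the nested double sum by pairs $\sigma\leq\tau$ in $\operatorname{NC}(n)$, invoking the relative Kreweras complement and the identification of $(\mathfrak{G}^s(R),\boxtimes)$ with multiplicative functions under incidence-algebra convolution --- names exactly the machinery Nica and Speicher develop for this purpose (their Lectures 17--18), but as written it is a plan, not a proof: the canonical factorisation of intervals $[\sigma,\tau]$ in $\operatorname{NC}(n)$ and the compatibility of Kreweras complementation with that factorisation are precisely the nontrivial combinatorial lemmas, and until they are established the associativity claim is unsupported. So the proposal is faithful to the cited proof and contains no false steps, but it is complete only modulo that key lemma.
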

Let us use the notation $(\underline{1}_s,\underline{0}):=(\underbrace{1,\dots,1}_{s\times 1_R},0,0,0,\dots)=1_{\mathfrak{G}^s(R)}$.

The {\bf Moebius} series $\operatorname{Moeb}$, is the $\boxtimes$-inverse of the constant series $\operatorname{Zeta}:=\underline{1}$, and therefore it satisfies
\begin{equation}
\label{Moebius_series}
\operatorname{Zeta}\boxtimes\operatorname{Moeb}=(\underline{1}_s,\underline{0})=\operatorname{Moeb}\boxtimes\operatorname{Zeta}.
\end{equation}
Its combinatorial meaning is fundamental in the approach to free probability as developed by A. Nica and R. Speicher, cf.~[\cite{NS}, p.280].

\begin{df}
For $f\in R\langle\langle x_1,\dots, x_s\rangle\rangle$ and $w\in[s]^*_+$, we call
\begin{equation}
\label{comb_cumulants}
\kappa_w(f):=(f\boxtimes\operatorname{Moeb})_w
\end{equation}
the $|w|$th {\bf free cumulant} of $f$, and the formal power series
\begin{equation}
\label{R-trafo}
\mathcal{R}(f):=f\boxtimes\operatorname{Moeb}
\end{equation}
the {\bf $\mathcal{R}$-transform} of $f$.
\end{df}
\begin{rem}
The above definitions basically encode the fundamental {\bf moment-cumulant formul{\ae}} in the combinatorial approach to free probability, cf.~\cite{NS}.
\end{rem}
\subsection{Co-ordinate changes and Baker-Campbell-Hausdorff}
Within the theory of (pro)-unipotent group schemes, the known moment-cumulant formul{\ae}, related to the different notions of independence, find a natural explanation. 
In fact,  they are direct consequences of the {\bf Baker-Campbell-Hausdorff}-formula (BCH). 

In~\cite{FMcK4} we introduced the point of view that moments and cumulants are different co-ordinates on the groups related to free convolution. 

The {\bf Baker-Campbell-Hausdorff} (BCH) series defines a group law $\circ_{\operatorname{BCH}}$ on the Lie algebra $\mathfrak{g}$, which for nilpotent Lie groups is polynomial, cf.~\cite{BLU,CG,Mil_AG}
\begin{equation*}
\label{BCH}
X\circ_{\operatorname{BCH}}Y =X+Y+\frac{1}{2}[X,Y]_{\mathfrak{g}}+\frac{1}{12}([X,[X,Y]_{\mathfrak{g}}]_{\mathfrak{g}}+[Y,[Y,X]_{\mathfrak{g}}]_{\mathfrak{g}})+\cdots\quad X,Y\in\mathfrak{g}.
\end{equation*}

For homogeneous Lie groups on $G:=(R^N,\circ,0)$ we use for the Lie algebra $\mathfrak{g}$ the {\bf Jacobian basis}, cf.~\cite{BLU}.

For $x\in G$, let $L_x:=x\circ y$ be the left-translation by $x$ and $J_{L_x}(a):=\frac{\partial L_x(y)}{\partial y}(a)$ the Jacobi-matrix at $a$. The {\bf Jacobian basis $J_{\mathfrak{g}}$ of $\mathfrak{g}$} is given by the columns of the Jacobian matrix of $L_x$ at the origin, i.e.
$$
J_{\mathfrak{g}}:=J_{L_x}(0).
$$

Then according to~[\cite{BLU}, pp. 53-55], the diagram below commutes
$$
\begin{xy}
  \xymatrix{
{\begin{array}{c}(k^N,\circ,\delta_{\lambda})\\\end{array}}\ar[drr]^{\operatorname{LOG}}
\ar[d]_{\log}  &     & 
 \\
{\begin{array}{c}(\mathfrak{g},[~,~]_{\delta_{\lambda}})\\\text{Jacobian basis}\end{array}}\ar[rr]_{\operatorname{BCH}}  &   &{\begin{array}{c}(k^N,\circ_{\operatorname{BCH}},\delta_{\lambda})\\\text{Jacobi}\\
\text{co-ordinates}
\end{array}}
  }
\end{xy}
$$
where $\operatorname{LOG}:=\operatorname{BCH}\circ\log$ is an isomorphism of homogeneous Lie groups.
\subsubsection{Additive case}
Let $N:=N(s,n):=\hash([s]^*_+)_n$ be the number of words $w\in[s]_+^*$ with $1\leq|w|\leq n$. 

\begin{thm}
The abelian groups $(k^N,\star_{\bullet,a},0)$, induced by tensor, free and boolean independence, are homogeneous of degree $\sigma_w=\deg(w)$. Each of them is isomorphic to the additive group $(k^N,+,0)$.  

The group isomorphisms
$
(k^N,\star_{\operatorname{T},a},0)\cong (k^N,\boxplus_V,0)\cong (k^N,\star_{\operatorname{B},a},0),
$
are given by $\operatorname{EXP}_{j,a}\circ\operatorname{LOG}_{i,a}$,  $i,j\in\{\operatorname{T},\operatorname{F},\operatorname{B}\}$ which are homogeneous polynomials of degree $\deg(w)$, as shown in the commutative diagram below
$$
\begin{xy}
  \xymatrix{
     (k^N,\star_{\bullet_i,a},0)\ar[dr]_{\operatorname{LOG_{\bullet_i,a}}}\ar[rr] &     & (k^N,\star_{\bullet_j,a},0) \\
 & (k^N,+,0)\ar[ur]_{\operatorname{EXP_{\bullet_j,a}}}  &
 }
\end{xy}
$$
\end{thm}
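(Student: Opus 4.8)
The plan is to treat the three convolutions uniformly and reduce the statement to the general machinery for homogeneous nilpotent formal groups already developed in the preceding section. First I would recall that Propositions~\ref{prop:ten_bool} and~\ref{prop:free} already give, for each $\bullet\in\{\operatorname{T},\operatorname{F},\operatorname{B}\}$, that $\star_{\bullet,a}$ is a commutative group law whose $w$-component $F_{\bullet,a}(X,Y)_w$ is a polynomial, homogeneous of degree $|w|$ in the combined variables $X$ and $Y$. Since each $F_{\bullet,a}(\,\cdot\,)_w$ involves only variables indexed by words of weight $\le|w|$, truncation at length $n$ is compatible with the group law, and I identify $(k^N,\star_{\bullet,a},0)$ with this finite-dimensional quotient. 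After ordering the words $w\in([s]^*_+)_n$ so that $\sigma_w:=|w|$ is non-decreasing, I set $\delta_{\lambda^{\sigma}}(X)_w:=\lambda^{|w|}X_w$.

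For homogeneity, the degree-$|w|$ homogeneity of $F_{\bullet,a}(X,Y)_w$ gives directly
$$
F_{\bullet,a}(\delta_{\lambda^{\sigma}}X,\delta_{\lambda^{\sigma}}Y)_w=\lambda^{|w|}F_{\bullet,a}(X,Y)_w=\delta_{\lambda^{\sigma}}\bigl(F_{\bullet,a}(X,Y)\bigr)_w,
$$
so each $\delta_{\lambda^{\sigma}}$ is an automorphism and $(k^N,\star_{\bullet,a},\delta_{\lambda^{\sigma}})$ is a homogeneous group of weight $\sigma_w=\deg(w)$. Invoking the embedding theorem of the previous section, each such group is a smooth unipotent formal group whose Lie algebra $L(F_{\bullet,a})$ is nilpotent; since the group is abelian, this Lie algebra is in fact abelian.

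The isomorphism with $(k^N,+,0)$ then follows from the $\log$--$\exp$ correspondence. Because $L(F_{\bullet,a})$ is abelian, every bracket vanishes and the Baker--Campbell--Hausdorff series collapses to $X\circ_{\operatorname{BCH}}Y=X+Y$; hence $\operatorname{LOG}_{\bullet_i,a}:=\operatorname{BCH}\circ\log$ is an isomorphism of homogeneous groups onto $(k^N,+,0)$, with inverse $\operatorname{EXP}_{\bullet_i,a}$. That these maps are homogeneous polynomials whose $w$-component has degree $|w|$ I would prove by induction on $|w|$, using condition~(2) of the embedding theorem: the correction term $Q_w$ involves only variables of strictly smaller weight, so solving recursively for the components of $\log$ (and dually $\exp$) produces at each step a homogeneous polynomial of the prescribed degree.

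Finally, composing the two homogeneous polynomial isomorphisms through the common additive group yields $\operatorname{EXP}_{\bullet_j,a}\circ\operatorname{LOG}_{\bullet_i,a}\colon(k^N,\star_{\bullet_i,a},0)\xrightarrow{\sim}(k^N,\star_{\bullet_j,a},0)$; a composite of degree-preserving homogeneous polynomial maps is again homogeneous of degree $\deg(w)$, and the displayed triangle commutes by the definition of the composite. The main obstacle is the degree bookkeeping of the third paragraph: one must verify that the graded-triangular structure of the group law is inherited by $\log$ and $\exp$, so that no component of the coordinate change acquires an inhomogeneous or higher-degree term. All remaining steps are formal consequences of homogeneity, commutativity, and the general theory of nilpotent formal groups.
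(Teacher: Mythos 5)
Your proposal is correct and follows essentially the same route as the paper, which leaves this theorem without a written proof and relies on exactly the ingredients you assemble: Propositions~\ref{prop:ten_bool} and~\ref{prop:free} for the commutative, $|w|$-homogeneous polynomial group laws, the embedding of homogeneous groups into unipotent formal groups from the preceding section, and the $\operatorname{LOG}=\operatorname{BCH}\circ\log$ correspondence, which collapses to an isomorphism onto $(k^N,+,0)$ because commutativity of the group forces the Lie algebra to be abelian. Your extra details (compatibility of truncation with the group law, and the induction giving homogeneity of the $\log$/$\exp$ components) merely make explicit what the paper delegates to the cited results of~\cite{BLU}.
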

\begin{rem}
In the case of monotone and anti-monotone additive independence there is no such isomorphism to the additive group, as they are not commutative.
\end{rem}

Let us now assign to $x_i\in[s]^*$ the homogeneous degree $\deg(x_i):=i$. Then $k\langle x_1,\dots, x_s\rangle$ becomes an $\N$-graded connected $k$-algebra. 

The $\N$-graded dual group, cf. also~\cite{L}, is given by a co-product $\Delta_{h}$ which is homogeneous of degree $0$ and a co-unit $\varepsilon_h$ with $\varepsilon_h(x_i)=0$ for $i\in[s]$ and $\varepsilon_h(1)=1$. In particular we have $\Delta_h(1)=1$ and
\begin{equation*}
\Delta_h(x_i)=x_i+y_i+\sum_{\deg(u_j)=i}\alpha_{u_j}u_j,\qquad \alpha_{u_j}\in k.
\end{equation*}
The corresponding $\bullet$-labelled convolution, indexed by $h$, is
$$
\star_{\bullet,h}:=(-\bullet-)\circ\Delta_h.
$$
\begin{thm}
\label{fund_add_thm}
Let $(k\langle x_1,\dots,x_s\rangle_+,\Delta_h,0)$ be an $\N$-graded dual group.
For each universal product, $(k\langle\langle x_1,\dots,x_s\rangle\rangle_+,\star_{\bullet,h},0)$ is a homogeneous affine algebraic group, i.e. a pro-unipotent group. The corresponding formal group law $F_{\bullet,h}(X,Y),
$\begin{equation*}
F_{\bullet,h}(X,Y)_w=X_w+Y_w+(Q_{\bullet_h})_w(X_u,Y_v,|u|,|v|<|w|)
\end{equation*}
is given by homogeneous polynomials of weight $\sigma_w=\deg(w)$.

For tensor, free and boolean independence it is commutative and isomorphic to the additive group $(k^{\N},+,0)$.
\end{thm}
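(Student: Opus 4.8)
The plan is to reduce the statement to the structure theorem for homogeneous formal groups established earlier, by checking that $\star_{\bullet,h}$ is governed by a polynomial, weight-homogeneous, triangular formal group law; pro-unipotency is then automatic via Lazard's Theorem~\cite{Laz}. First I would record that, by Proposition~\ref{conv_thm}, $(k\langle\langle x_1,\dots,x_s\rangle\rangle_+,\star_{\bullet,h},0)$ is already an associative monoid with unit $0$ for each of the five universal products, so only polynomiality, homogeneity, and invertibility remain.

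For the group law itself I would compute $F_{\bullet,h}(X,Y)_w=(f\bullet g)(\Delta_h(x_w))$ directly. Since $\Delta_h$ is an algebra morphism, $\Delta_h(x_w)=\Delta_h(x_{i_1})\cdots\Delta_h(x_{i_m})$, and because $\Delta_h$ is homogeneous of degree $0$ for $\deg(x_i)=i$, every word occurring in this product has $\deg$ exactly $\deg(w)$; as $\deg(x_i)\ge 1$ there are only finitely many such words, which already forces polynomiality. Applying the coordinate formulas of Definition~\ref{coord-prod}, each word evaluates to a monomial in the coefficients $f_u,g_v$ whose partial degrees sum to the degree of the word, hence to $\deg(w)$; thus $F_{\bullet,h}(X,Y)_w$ is homogeneous of weight $\sigma_w=\deg(w)$ once $X_u,Y_v$ carry degrees $\deg(u),\deg(v)$. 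The two extreme words $x_{i_1}\cdots x_{i_m}$ and $y_{i_1}\cdots y_{i_m}$ contribute $X_w+Y_w$ by UP2, while every remaining mixed word splits $\deg(w)$ into at least two positive parts and therefore involves only coefficients of strictly smaller degree, yielding the triangular remainder $(Q_{\bullet_h})_w$. This is exactly the computation carried out for the primitive coproduct in Propositions~\ref{prop:ten_bool}--\ref{prop:free}, now read off degree by degree.

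Invertibility, and hence the group rather than monoid structure, I would obtain from this triangular shape: the grading turns $k\langle x_1,\dots,x_s\rangle$ into a graded connected bialgebra under $\Delta_h$, hence a commutative Hopf algebra whose antipode---the convolution inverse---is built recursively in each degree, the recursion being solvable because the linear part of $F_{\bullet,h}$ is the identity $X_w+Y_w$. The result is a homogeneous formal group in the sense established earlier, so Lazard's Theorem identifies it with a homogeneous pro-unipotent affine group scheme. Finally, for the tensor, free and boolean products---which are commutative in the sense of~(\ref{commutative_prod}), so that $f\bullet g=(g\bullet f)\circ\tau_{12}$---the $\tau_{12}$-symmetry of the additive-type graded coproduct $\Delta_h$ makes $F_{\bullet,h}(X,Y)_w$ symmetric under $X\leftrightarrow Y$, i.e. abelian; an abelian homogeneous group over a field of characteristic zero is a vector group, so the map $\operatorname{LOG}=\operatorname{BCH}\circ\log$ introduced for homogeneous Lie groups is a group isomorphism onto $(\mathfrak{g},+)\cong(k^{\N},+,0)$, the series $\operatorname{BCH}$ collapsing to ordinary addition since the bracket vanishes.

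The main obstacle I anticipate is twofold. The routine but delicate part is the uniform degree bookkeeping through the recursive free product, verifying simultaneously for all five products that mixed terms strictly lower the weight. The genuinely subtle point is the commutativity claim for the tensor, free and boolean cases: for a general, non-cocommutative graded coproduct $\Delta_h$ the symmetry $F_{\bullet,h}(X,Y)=F_{\bullet,h}(Y,X)$ can fail---a single term such as $x_1x_1y_1$ in $\Delta_h(x_3)$ would produce $X_1^2Y_1$ against $Y_1^2X_1$---so one must pin down precisely that the additive coproduct and its homogeneous lift are invariant under $\tau_{12}$; this $\tau_{12}$-invariance is exactly the property that guarantees abelianness and hence the isomorphism with the additive group.
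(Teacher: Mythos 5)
Your main argument follows the paper's own proof: expand $\Delta_h(w)=\Delta_h(x_{i_1})\cdots\Delta_h(x_{i_m})$, use that $\Delta_h$ is homogeneous of degree $0$ to see that every word occurring has degree $\deg(w)$ (hence polynomiality and the weights $\sigma_w=\deg(w)$), extract the linear part $X_w+Y_w$ from the two pure words via UP2 of Definition~\ref{df:up}, and read off the group laws exactly as in Propositions~\ref{prop:ten_bool}, \ref{prop:monoton} and \ref{prop:free}; triangularity, the recursively constructed inverse, and Lazard's theorem then give pro-unipotence, which is also how the paper's section on homogeneous formal groups proceeds. One slip of wording: $(k\langle x_1,\dots,x_s\rangle_+,\Delta_h)$ is a dual group, not a bialgebra; the graded connected \emph{commutative} Hopf algebra whose antipode you construct recursively is the polynomial algebra $k[X_w : w\in[s]^*_+]$ carrying the law $F_{\bullet,h}$.

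The place where you depart from the paper is your final paragraph, and there your suspicion is justified: commutativity for tensor, free and boolean independence is a genuine gap, in the paper's proof and in fact in the statement. The paper's proof of Proposition~\ref{prop:ten_bool} obtains symmetry of the group law from the co-commutativity $\Delta_a(w)=w\bar{w}=\bar{w}w$, and the proof of Theorem~\ref{fund_add_thm} transfers this ``analogously'' although a general $\N$-graded dual group has no such symmetry. Your proposed witness does not work as stated---$\Delta_h(x_3)=x_3+y_3+x_1x_1y_1$ is not coassociative, because $(\Delta_h\sqcup\id)\circ\Delta_h(x_3)$ contains the words $\iota_1(x_1)\iota_2(x_1)\iota_3(x_1)$ and $\iota_2(x_1)\iota_1(x_1)\iota_3(x_1)$, which are absent from $(\id\sqcup\Delta_h)\circ\Delta_h(x_3)$---but a correct one exists: for $s=3$ take $x_1,x_2$ primitive, $\Delta_h(x_3):=x_3+y_3+x_1y_2$, and $S(x_1)=-x_1$, $S(x_2)=-x_2$, $S(x_3)=-x_3+x_1x_2$. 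All co-group axioms can be checked on generators (all maps involved are algebra morphisms), they hold, and $\Delta_h$ is homogeneous of degree $0$; yet for each $\bullet\in\{\operatorname{T},\operatorname{F},\operatorname{B}\}$
\[
(f\star_{\bullet,h}g)(x_3)=f_3+g_3+f_1g_2,
\qquad
(g\star_{\bullet,h}f)(x_3)=f_3+g_3+f_2g_1,
\]
where $f_i:=f(x_i)$, $g_i:=g(x_i)$; so none of these convolutions is commutative and no isomorphism with $(k^{\N},+,0)$ can exist. Hence the last sentence of the theorem needs the extra hypothesis $\tau_{12}\circ\Delta_h=\Delta_h$, which is exactly the condition you isolate; with it, your argument (commutativity of the product in the sense of~(\ref{commutative_prod}) plus $\tau_{12}$-invariance yields symmetry of $F_{\bullet,h}$, then $\operatorname{LOG}$ gives the isomorphism onto the additive group) closes the proof, and does so more carefully than the paper.
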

\begin{rem}
The group laws can concretely be written down, as previously, however we shall omit it here.
\end{rem}
\begin{proof}
For $w=x_{i_1}\dots x_{i_m}$ we have 
\begin{eqnarray*}
\Delta_h(w)&=&\Delta(x_{i_1})\cdots\Delta(x_{i_m})\\
&=&(x_{i_1}+y_{i_1}+\sum_{|{u_{\nu_1}}|=i_1}\alpha_{u_{\nu_1}}u_{\nu_1})\cdots(x_{i_m}+y_{i_m}+\sum_{|{u_{\nu_1}}|=i_m}\alpha_{u_{\nu_m}}u_{\nu_m})\\
&=&w+\bar{w}+\sum\beta_{\tilde{w}}\tilde{w}
\end{eqnarray*}
with $\beta_{\tilde{w}}\in k$ and $\deg(\tilde{w})=\deg(w)$, i.e. it is homogeneous of degree $i_1+\cdots+i_m$.

From this expression we obtain, analogously as in Propositions~\ref{prop:ten_bool},\ref{prop:monoton} and \ref{prop:free}, the explicit form of the group laws and the rest of the statements.
\end{proof}
\begin{df}
For $f\in(k^N,\star_{\bullet,h},0)$, $w\in[s]^*_+$, the {\bf ${\bullet_h}$-cumulants}, $\kappa_{\bullet_h}$, are given by
\begin{equation}
\label{BCH_cum}
\kappa_{{\bullet_h}}(f)_w:=\operatorname{LOG}_{{\bullet_h}}(f)_w.
\end{equation} 
\end{df}
\begin{rem}
The map $\operatorname{LOG}$ requires the choice of co-ordinates on the Lie algebra, for which we shall use the Jacobian basis. 
\end{rem}
We are not restricted to finite $N$ but can take the {\bf projective limit}, which corresponds to having $k^{\N}$ instead. In fact, this follows from the properties which the exponential and logarithm maps have for homogeneous Lie groups, cf.~[\cite{BLU}, p. 50]. This also gives the following general characterisation of ${\bullet_h}$-cumulants: 
\begin{thm}
\begin{enumerate}
\item (Additivity) For tensor, free and boolean independence and $f,g\in (k\langle\langle x_1,\dots, x_s\rangle\rangle_+,\star_{\bullet,h},0)$, the respective ${\bullet_h}$-cumulants satisfy:
$$
\kappa_{\bullet_h}(f\star_{\bullet_h}g)_w=\kappa_{{\bullet_h}}(f)_w+\kappa_{{\bullet_h}}(g)_w
$$
\item (Homogeneity) For $\lambda\in k^{\times}$:
$
\kappa_{\bullet_h}(\delta_{\lambda^{\sigma}} f)_w=\lambda^{\deg(w)}\kappa_{\bullet_h}(f)_w
$
holds,
\item (Pyramid shaped moment-cumulant formul{\ae})
The relation between moments and ${\bullet_h}$-cumulants is given by
$$
m_{\bullet_h}(f)_w=\kappa_{\bullet_h}(f)_w+(Q_{\bullet_h})_w(\kappa_{\bullet_h}(f)_u, \deg(u)<\deg(w))
$$
where $(Q_{\bullet_h})_w$ is a homogeneous polynomial without constant term and which depends on the independence chosen.
\end{enumerate}
\end{thm}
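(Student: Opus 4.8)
The plan is to derive all three properties from a single structural fact: that $\operatorname{LOG}_{\bullet_h}$ is an isomorphism of homogeneous Lie groups from $(k^{\N},\star_{\bullet,h},0)$ onto the additive group $(k^{\N},+,0)$, as encoded in the commuting triangle $\operatorname{LOG}=\operatorname{BCH}\circ\log$ of the preceding subsection, together with the pro-unipotent structure supplied by Theorem~\ref{fund_add_thm}. First I would note that the passage to the projective limit $k^{\N}$ is legitimate: in each fixed weight the group law $F_{\bullet,h}$ is polynomial and involves only finitely many coordinates of bounded degree, so $\operatorname{LOG}_{\bullet_h}$ and its inverse $\operatorname{EXP}_{\bullet_h}$ act componentwise by homogeneous polynomials, exactly as for homogeneous Lie groups.

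For additivity (part 1) I would use that for tensor, free and boolean independence the convolution group is abelian (Propositions~\ref{prop:ten_bool} and \ref{prop:free}), whence the associated Lie algebra $\mathfrak{g}$ is abelian and the BCH series collapses to $X\circ_{\operatorname{BCH}}Y=X+Y$. Consequently $\operatorname{LOG}_{\bullet_h}$ is a \emph{group homomorphism} onto $(k^{\N},+,0)$, and reading off the $w$th component of $\operatorname{LOG}_{\bullet_h}(f\star_{\bullet_h}g)=\operatorname{LOG}_{\bullet_h}(f)+\operatorname{LOG}_{\bullet_h}(g)$ yields precisely $\kappa_{\bullet_h}(f\star_{\bullet_h}g)_w=\kappa_{\bullet_h}(f)_w+\kappa_{\bullet_h}(g)_w$.

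For homogeneity (part 2) the essential input is that $\operatorname{LOG}_{\bullet_h}$ is an isomorphism of \emph{homogeneous} Lie groups, i.e.\ it intertwines the dilations $\delta_{\lambda^{\sigma}}$ on source and target (this is the compatibility with $\delta_\lambda$ recorded in the commuting diagram). Since the $w$th coordinate of $\delta_{\lambda^{\sigma}}$ is multiplication by $\lambda^{\sigma_w}=\lambda^{\deg(w)}$, applying $\operatorname{LOG}_{\bullet_h}\circ\delta_{\lambda^{\sigma}}=\delta_{\lambda^{\sigma}}\circ\operatorname{LOG}_{\bullet_h}$ componentwise gives $\kappa_{\bullet_h}(\delta_{\lambda^{\sigma}}f)_w=\lambda^{\deg(w)}\kappa_{\bullet_h}(f)_w$; this argument uses no commutativity and so covers every universal product.

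For the pyramid-shaped moment--cumulant formula (part 3) I would identify the moment coordinates with the standard ones, $m_{\bullet_h}(f)_w=f_w$, and recover them from the cumulants via $f=\operatorname{EXP}_{\bullet_h}(\kappa_{\bullet_h}(f))$. The form of $F_{\bullet,h}$ in Theorem~\ref{fund_add_thm}, namely $X_w+Y_w+(Q_{\bullet_h})_w(X_u,Y_v,|u|,|v|<|w|)$, shows that both $\log$ and BCH are triangular for the connected grading: the diagonal term is the identity and all corrections preserve degree while involving only strictly lower weights. Hence $\operatorname{EXP}_{\bullet_h}$ inherits this triangular shape, and expanding its $w$th component gives $m_{\bullet_h}(f)_w=\kappa_{\bullet_h}(f)_w+(Q_{\bullet_h})_w(\kappa_{\bullet_h}(f)_u,\deg(u)<\deg(w))$; connectedness ($0\mapsto0$) removes the constant term and part~2 forces $(Q_{\bullet_h})_w$ to be homogeneous of degree $\deg(w)$. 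The main obstacle I anticipate lies exactly here: one must verify carefully, using connectedness and the degree-preserving form of $\Delta_h$, that the coordinate change is genuinely lower-triangular with unit diagonal in every weight, so that the recursion for $(Q_{\bullet_h})_w$ closes on cumulants of strictly smaller degree, uniformly across all five universal products, including the non-commutative monotone and anti-monotone cases where no linearisation to $(k^{\N},+)$ is available.
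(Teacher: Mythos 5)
Your proposal is correct and follows essentially the same route as the paper, which derives all three properties from the definition $\kappa_{\bullet_h}=\operatorname{LOG}_{\bullet_h}$ together with the standard properties of the exponential and logarithm maps for (pro-)homogeneous Lie groups (the collapse of BCH to addition in the abelian cases, compatibility of $\operatorname{LOG}$ with the dilations $\delta_{\lambda^{\sigma}}$, and the unit-diagonal triangularity of $\operatorname{EXP}_{\bullet_h}$ coming from the group law of Theorem~\ref{fund_add_thm}). The triangularity concern you raise at the end is precisely what the paper disposes of by citing the homogeneous Lie group theory of~\cite{BLU}, so it is not a gap but the same appeal to that machinery.
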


\begin{rem}
The above statements constitute a Lie theoretic derivation of the discussion F. Lehner~[\cite{L} p.~70] gives on the properties cumulants of additively independent random variables have in the case of classical, boolean and free independence. 
\end{rem}

The right-translation with the Moebius-series~(\ref{Moebius_series}),
which by~\cite{NS}, Proposition 17.4,  gives the transformation from moments to free cumulants, i.e. the combinatorial cumulants is according to~(\ref{R-trafo}) the $\mathcal{R}$-transform. Its differential at the origin, in {\em canonical co-ordinates}, is the identity which follows from differentiating the expression~(\ref{boxed_law}) with respect to the $X_w$ and evaluating at $0$. 

We can now relate the two notions of cumulants in the additive free case. It is understood that the differential is evaluated at the identity of the group.

\begin{prop}
For every $f\in k^{\N}$,  the ${\boxplus_V}$-cumulants  and the free cumulants $\kappa$ are related by a linear transformation, given by a lower-triangular matrix with unit diagonal, i.e.
\begin{equation}
\mathcal{R}=d\mathcal{R}\circ\operatorname{LOG}_{\boxplus_V},
\end{equation}
with the $\mathcal{R}$-transform given in~(\ref{R-trafo}).
\end{prop}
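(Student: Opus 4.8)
The plan is to realise both cumulant systems as homomorphisms of $(k^{\N},\boxplus_V,0)$ onto the additive group and then to read the comparison map off as the differential of $\mathcal{R}$ at the neutral element. First I would record that $\operatorname{LOG}_{\boxplus_V}$ is, by the additivity part of the characterisation of $\bullet_h$-cumulants specialised to free independence, an isomorphism $(k^{\N},\boxplus_V,0)\to(k^{\N},+,0)$, so that $\operatorname{LOG}_{\boxplus_V}(f\boxplus_V g)=\operatorname{LOG}_{\boxplus_V}(f)+\operatorname{LOG}_{\boxplus_V}(g)$. Dually, I would use the fundamental moment--cumulant fact (Nica--Speicher, Proposition 17.4, already invoked above) that the $\mathcal{R}$-transform $\mathcal{R}(f)=f\boxtimes\operatorname{Moeb}$ turns the free additive convolution of moment series into coefficientwise addition of free cumulants; thus $\mathcal{R}$ is itself a homomorphism $(k^{\N},\boxplus_V,0)\to(k^{\N},+,0)$. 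Consequently the composite $\Phi:=\mathcal{R}\circ\operatorname{LOG}_{\boxplus_V}^{-1}$ is an endomorphism of the additive group, i.e.\ $\Phi(X+Y)=\Phi(X)+\Phi(Y)$.

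The second step upgrades $\Phi$ from additive to $k$-linear. Since the group is homogeneous and pro-unipotent, $\operatorname{LOG}_{\boxplus_V}$ and its inverse $\operatorname{EXP}_{\boxplus_V}$ are polynomial in every fixed degree, and each coefficient of $\mathcal{R}(f)$ is the finite sum $\sum_{\pi\in\operatorname{NC}(|w|)}X_{w,\pi}(f)\,X_{w,K(\pi)}(\operatorname{Moeb})$, hence also polynomial; therefore $\Phi$ is a polynomial map. An additive polynomial map over a field of characteristic zero is necessarily linear, so $\Phi$ is a linear automorphism of $k^{\N}$.

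It remains to identify $\Phi$ with $d\mathcal{R}$ and to determine its shape. Because the logarithm is taken in the Jacobian (canonical) coordinates, one has $d\operatorname{LOG}_{\boxplus_V}=\id$ at the neutral element; differentiating $\Phi=\mathcal{R}\circ\operatorname{LOG}_{\boxplus_V}^{-1}$ there gives $d\Phi=d\mathcal{R}$, and as a linear map equals its own differential we obtain $\Phi=d\mathcal{R}$, whence $\mathcal{R}=d\mathcal{R}\circ\operatorname{LOG}_{\boxplus_V}$. For the matrix shape I would differentiate the boxed-convolution formula at the neutral element: the one-block partition contributes the full moment together with $\operatorname{Moeb}$ evaluated on singletons, producing the coefficient $1$ of $f_w$, while every other surviving contribution stems from a partition with a single non-singleton block (all remaining blocks singletons), whose restriction is a proper subword of strictly shorter length. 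Thus $d\mathcal{R}$ sends the $w$-component to $f_w$ plus a linear combination of $f_u$ with $|u|<|w|$, i.e.\ it is lower-triangular with unit diagonal for the filtration by word length.

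The delicate point, and the main obstacle, is the bookkeeping across the two normalisations: $\operatorname{LOG}_{\boxplus_V}$ is attached to the additive-convolution group with neutral element $0$, whereas $\mathcal{R}=-\boxtimes\operatorname{Moeb}$ is a right translation on the boxed-convolution group $\mathfrak{G}^s$ with neutral element $\underline{1}_s$. One must verify that both maps linearise one and the same operation $\boxplus_V$ before $\Phi$ is even well defined, and that the differential of $\mathcal{R}$ entering the statement is taken at the group identity rather than at the homogeneous origin (where, in canonical coordinates, it would merely be the identity). Making these base-point identifications precise, together with the additive-implies-linear step, is where the real work lies; the Kreweras-complement computation that pins down the strictly-lower-triangular entries is then routine.
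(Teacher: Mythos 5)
Your main line of argument (steps 1--5) is correct and is essentially the paper's own: the paper gives no formal proof, and its surrounding text is exactly the reasoning you spell out --- $\operatorname{LOG}_{\boxplus_V}$ and $\mathcal{R}$ are both isomorphisms of $(k^{\N},\boxplus_V,0)$ onto $(k^{\N},+,0)$, their composite $\Phi=\mathcal{R}\circ\operatorname{LOG}_{\boxplus_V}^{-1}$ is additive and polynomial, hence linear in characteristic zero, and differentiating at the neutral element identifies $\Phi$ with $d\mathcal{R}$.

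However, your step 6 and the ``delicate point'' you raise at the end contain a genuine confusion, and it points in the opposite direction from what you suspect. The source group here is $(k^{\N},\boxplus_V,0)$: its identity element \emph{is} the homogeneous origin $0$ (the elements $\underline{1}$ and $(\underline{1}_s,\underline{0})$ are the identities of the multiplicative groups $\boxtimes_V$ and $\boxtimes$, which are not the groups in this proposition), and moreover $\mathcal{R}(0)=0\boxtimes\operatorname{Moeb}=0$, so there is no base-point discrepancy to reconcile: the $d\mathcal{R}$ produced by your step 5 is the differential at $0$. But then your description of its entries is wrong. In $\mathcal{R}(f)_w=\sum_{\pi\in\operatorname{NC}(|w|)}X_{w,\pi}(f)\,X_{w,K(\pi)}(\operatorname{Moeb})$ each summand is a monomial of degree $|\pi|$ (the number of blocks) in the coordinates of $f$; differentiating at $f=0$ therefore kills every partition except the one-block partition $1_{|w|}$, whose Kreweras complement is the all-singleton partition, giving the coefficient $1$ of $f_w$. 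The partitions you invoke (one non-singleton block, all other blocks singletons) produce monomials of degree at least $2$ and contribute nothing at $0$; their contributions survive only if one differentiates the right-translation $-\boxtimes\operatorname{Moeb}$ at the $\boxtimes$-identity $(\underline{1}_s,\underline{0})$, which is not the point your argument uses. Hence $d\mathcal{R}|_0=\id$ exactly --- which is what the paper itself records when it says the differential at the origin in canonical co-ordinates is the identity --- and your steps 1--5 actually prove the sharper statement $\mathcal{R}=\operatorname{LOG}_{\boxplus_V}$. Since the identity matrix is in particular lower-triangular with unit diagonal, the proposition as stated still follows and your proof is not sunk; but the computation you defer as ``routine'' is carried out at the wrong point, and the strictly-lower-triangular entries it would produce do not occur.
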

The above is equivalent to the commutativity of the diagram:
$$
\begin{xy}
  \xymatrix{
{\begin{array}{c}(k^{\N},\boxplus_V,0)\\\text{moments}\end{array}}
\ar[d]_{\operatorname{LOG}_{\boxplus_V}}\ar[rr]^{\mathcal{R}} &     & (k^{\N},+,0)\ar[d]^{\operatorname{LOG}_+=\id_{k^{\N}}}
 \\
{\begin{array}{c}(k^{\N},+,0)\\\text{${\boxplus_V}$-cumulants}\end{array}}\ar[rr]^{d\mathcal{R}}  &   &{\begin{array}{c}(k^{\N},+,0)\\\text{free-cumulants}\end{array}}
  }
\end{xy}
$$

\begin{rem}
Such a relation holds for the other notions of independence also.
\end{rem}
\subsubsection{Multiplicative case}
In the multiplicative case we have the following statements. 
\begin{prop}
The formal group law corresponding to $\star_{\operatorname{T},m}$ is
$$
F_{\operatorname{T},m}(X,Y)_w=X_w+Y_w+X_wY_w.
$$
\end{prop}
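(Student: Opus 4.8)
The plan is to read the group law off directly from the componentwise description of the tensor multiplicative convolution, after re-centering the coordinates at the neutral element. First I would recall from the earlier proposition that for $f,g\in R\langle\langle x_1,\dots,x_s\rangle\rangle_1$ one has $(f\star_{\operatorname{T},m} g)_w=f_w\cdot g_w$ for every word $w$, and that the unit of this abelian monoid is $\varepsilon_m=\underline{1}$, whose every coefficient equals $1_R$. Since by convention a formal group law carries its neutral element to the origin, the essential move is the change of coordinates $X_w:=f_w-1$ and $Y_w:=g_w-1$, equivalently $f_w=1+X_w$ and $g_w=1+Y_w$.

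Second, I would substitute into the componentwise product. One gets
$$
(f\star_{\operatorname{T},m} g)_w=(1+X_w)(1+Y_w)=1+X_w+Y_w+X_wY_w,
$$
so that, subtracting the constant $1$, the $w$th coordinate of the product in the re-centered coordinates is exactly $F_{\operatorname{T},m}(X,Y)_w=X_w+Y_w+X_wY_w$, as claimed.

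Third, I would check that this polynomial expression is indeed a formal group law. The identities $F_{\operatorname{T},m}(X,0)_w=X_w$ and $F_{\operatorname{T},m}(0,Y)_w=Y_w$ exhibit the origin as two-sided neutral; associativity $F(F(X,Y),Z)=F(X,F(Y,Z))$ is inherited verbatim from the associativity of the monoid $(R\langle\langle x_1,\dots,x_s\rangle\rangle_1,\star_{\operatorname{T},m},\varepsilon_m)$ already established; and the formal inverse exists either by the general properties of formal group laws (cf.~\cite{Haz}) or explicitly, componentwise, as $\iota(X)_w=-X_w(1+X_w)^{-1}=\sum_{k\geq1}(-1)^kX_w^k$ in $R[[X_w]]$. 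Thus each coordinate is a copy of the multiplicative formal group $\widehat{\mathbb{G}}_m$, and the total law is their (infinite) product.

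I do not expect a genuine obstacle here. The only point requiring care, and in fact the entire conceptual content of the statement, is the re-centering at $\varepsilon_m=\underline{1}$ rather than at $\underline{0}$; once this translation of coordinates is made explicit the group law reduces to a one-line computation, and the recognition that it coincides with the classical multiplicative formal group law is what gives the formula $X_w+Y_w+X_wY_w$ its meaning.
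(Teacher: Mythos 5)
Your proposal is correct and follows exactly the route the paper intends: the paper states this proposition without proof, as an immediate consequence of the earlier result that $(f\star_{\operatorname{T},m} g)_w=f_w\cdot g_w$ with unit $\varepsilon_m=\underline{1}$, and your re-centering $f_w=1+X_w$, $g_w=1+Y_w$ is precisely the computation that identifies each coordinate with the multiplicative formal group law $X_w+Y_w+X_wY_w$. Your additional verification of the formal group law axioms (neutrality, associativity inherited from the monoid, formal inverse $\sum_{k\geq1}(-1)^kX_w^k$) is sound and fills in the step the paper leaves implicit.
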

For the boxed convolution we have 
\begin{prop}[\cite{FMcK4}]
Let $s\in\N^{\times}$ and $w\in[s]^*_+$. The smooth affine groups 
$((\mathfrak{G}_+^s)_n(k),\boxtimes),(\underline{1}_s,\underline{0})$ are $\delta_{\lambda}$-homogeneous with weights $\sigma_w=|w|-1$, for $2\leq|w|\leq n$. 
The representing formal group law is given by $(|w|-1)$-homogeneous polynomials
\begin{eqnarray}
\label{boxed_law}
F_w(X,Y)&=&{X}_w+{Y}_w+\sum_{\substack{\pi\in\operatorname{NC}(|w|)\\  \pi\neq0_{|w|},1_{|w|}}} {X}_{w,\pi}\cdot {Y}_{w,K(\pi)}.
\end{eqnarray}
For $s=1$, it is commutative and for $s\geq2$, non-commutative.
\end{prop}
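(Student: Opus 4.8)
The plan is to read the group law off directly from the defining formula of $\boxtimes$ in Definition~\ref{boxconv_df}, restricted to the subgroup $(\mathfrak{G}_+^s)_n(k)$, and then to extract both homogeneity and the triangular shape from a block-counting identity for the Kreweras complement. First I would check that $(\mathfrak{G}_+^s)_n(k)$ is indeed a subgroup: for a one-letter word $\operatorname{NC}(1)$ is the single trivial partition (its own Kreweras complement), so $X_{x_i}(f\boxtimes g)=f_{x_i}g_{x_i}=1$, i.e. the degree-one coefficients stay frozen at $1$; closure under $\boxtimes$ and inverses then follows from the group structure of $(\mathfrak{G}^s(k),\boxtimes)$ recalled above. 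Writing $X=f$ and $Y=g$, I then isolate from $\sum_{\pi\in\operatorname{NC}(|w|)}X_{w,\pi}(f)\,Y_{w,K(\pi)}(g)$ the two extreme partitions. For $\pi=1_{|w|}$ one has $X_{w,1_{|w|}}=X_w$ and $K(1_{|w|})=0_{|w|}$, and since $Y_{w,0_{|w|}}=\prod_j Y_{i_j}=1$ on $\mathfrak{G}_+^s$, this term contributes $X_w$; dually $\pi=0_{|w|}$ contributes $Y_w$. The remaining partitions $\pi\neq 0_{|w|},1_{|w|}$ reproduce exactly the stated mixed sum, giving~(\ref{boxed_law}).

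Next I would establish homogeneity. Assigning weight $\sigma_w=|w|-1$ to $X_w$ (so $\sigma=0$ on one-letter words, consistent with their being pinned to $1$), a block $V$ contributes a factor of weight $|V|-1$, whence $X_{w,\pi}$ has weight $\sum_{V\in\pi}(|V|-1)=|w|-|\pi|$, with $|\pi|$ the number of blocks. The crucial input is the Kreweras identity $|\pi|+|K(\pi)|=|w|+1$, so that $Y_{w,K(\pi)}$ has weight $|w|-|K(\pi)|=|\pi|-1$. Adding, every term of $F_w$ carries weight $(|w|-|\pi|)+(|\pi|-1)=|w|-1$, independent of $\pi$; this is precisely $\delta_\lambda$-homogeneity of degree $\sigma_w$, i.e. each $\delta_\lambda$ is a group automorphism. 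Polynomiality and the pyramid shape are then automatic, since the sum is finite and for $\pi\neq 0_{|w|},1_{|w|}$ both $\pi$ and $K(\pi)$ have all blocks of size strictly below $|w|$, so the mixed terms involve only $X_u,Y_v$ with $|u|,|v|<|w|$. Smoothness and affineness follow because the representing algebra is the polynomial ring $k[X_w]$ with the polynomial co-product~(\ref{boxed_law}); the positive grading forces the associated Lie algebra to be nilpotent, so by Lazard's theorem the group is unipotent.

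Finally I would treat the (non-)commutativity. For $s=1$ the coordinate $X_{w,\pi}$ depends only on the multiset of block sizes of $\pi$, so I would reindex the sum via the bijection $\pi\mapsto K(\pi)$ of $\operatorname{NC}(|w|)$ and use that $K^2$ is a rotation, which preserves block sizes; this interchanges the roles of $f$ and $g$ and yields $f\boxtimes g=g\boxtimes f$. For $s\geq2$ it suffices to exhibit one asymmetric instance, e.g. on the word $x_1x_2x_1$, where the non-crossing partitions producing $X_{x_1x_1}Y_{x_2}$ versus $X_{x_2}Y_{x_1x_1}$ break the symmetry for generic coefficients.

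I expect the main obstacle to be the homogeneity bookkeeping: invoking the block-count identity $|\pi|+|K(\pi)|=|w|+1$ correctly, and verifying that the one-letter coordinates, being frozen at $1$, drop out of every product so that the two weights genuinely add to $|w|-1$. The commutativity argument for $s=1$ is the second delicate point, as it rests on the rotational structure of $K^2$ on $\operatorname{NC}(|w|)$ rather than on $K$ itself being an involution.
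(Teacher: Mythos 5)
Your overall route coincides with the paper's: the paper's proof consists of exactly one idea, namely the Kreweras block-counting identity $|\pi|+|K(\pi)|=|w|+1$ (with all further details deferred to the reference \cite{FMcK4}), and your homogeneity bookkeeping --- weight of $X_{w,\pi}$ equal to $\sum_{V\in\pi}(|V|-1)=|w|-|\pi|$, weight of $Y_{w,K(\pi)}$ equal to $|w|-|K(\pi)|=|\pi|-1$, total $|w|-1$ independent of $\pi$ --- is precisely the intended application of that identity. Your extraction of the terms $X_w$ and $Y_w$ from the extreme partitions $1_{|w|}$ and $0_{|w|}$, the pyramid shape from the observation that $\pi\neq 0_{|w|},1_{|w|}$ forces every block of both $\pi$ and $K(\pi)$ to have size strictly less than $|w|$, and the $s=1$ commutativity via the substitution $\pi\mapsto K(\pi)$ combined with the fact that $K^2$ is a rotation (hence block-size preserving) are all sound.

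The one step that fails as written is your witness for non-commutativity when $s\geq2$. For $w=x_1x_2x_1$ the mixed part of the group law on $\mathfrak{G}^s_+$ is
\begin{equation*}
X_{x_1x_2}Y_{x_2x_1}+X_{x_2x_1}Y_{x_1x_1}+X_{x_1x_1}Y_{x_1x_2},
\end{equation*}
coming from the pairs $(\pi,K(\pi))$ with $\pi=\{\{1,2\},\{3\}\}$, $\{\{1\},\{2,3\}\}$ and $\{\{1,3\},\{2\}\}$, respectively. The monomials you name, $X_{x_1x_1}Y_{x_2}$ and $X_{x_2}Y_{x_1x_1}$, are not of this form --- no pair $(\pi,K(\pi))$ produces them --- and, worse, they involve the degree-one coordinates $X_{x_2}$, $Y_{x_2}$, which are frozen at $1$ on $\mathfrak{G}^s_+$ by your own (correct) first step, so even if they occurred they could not detect any asymmetry. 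The repair is immediate and stays within your strategy: the displayed sum is not invariant under $X\leftrightarrow Y$ (it contains $X_{x_1x_1}Y_{x_1x_2}$ but not $X_{x_1x_2}Y_{x_1x_1}$), so for generic $f,g\in\mathfrak{G}^s_+$ one has $f\boxtimes g\neq g\boxtimes f$.
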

\begin{proof}
The demonstration of the above statement uses the basic identity 
$$
\label{part-Krew_rel}
|\pi| + |K(\pi)|=n+1\qquad\text{for all $\pi\in\operatorname{NC}(n)$},
$$
satisfied by every block of a non-crossing partition $\pi$ and its Kreweras complement $K(\pi)$. For details, cf.~\cite{FMcK4}
\end{proof}

\begin{prop}
\label{box_V_rel_box}
The relation between $\boxtimes_V$ and $\boxtimes$ is given by 
\begin{equation}
\label{}
f\boxtimes_Vg:=(f\bullet_{\operatorname{F}}g)\circ\Delta_m=f\boxtimes\operatorname{Moeb}\boxtimes\,g
\end{equation}
for all $f,g\in\mathfrak{G}^s(R)$.
\end{prop}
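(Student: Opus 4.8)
The plan is to compare both sides coefficient-wise on an arbitrary word $w=(i_1\dots i_m)\in[s]^*_+$ (so $m=|w|$) and to recognise the resulting identity as the Nica--Speicher formula for the moments of a product of two free families. For the left-hand side I would first unfold the definition $\boxtimes_V=(-\bullet_{\operatorname{F}}-)\circ\Delta_m$. Since $\Delta_m(x_i)=\iota_1(x_i)\iota_2(x_i)=x_iy_i$, we have $\Delta_m(w)=x_{i_1}y_{i_1}x_{i_2}y_{i_2}\cdots x_{i_m}y_{i_m}$, so that
$$
(f\boxtimes_V g)_w=(f\bullet_{\operatorname{F}}g)(x_{i_1}y_{i_1}\cdots x_{i_m}y_{i_m}).
$$
Thus the left-hand side is exactly the value of the free universal product on the alternating word, i.e. the mixed moment of two families $\{a_i\}$, $\{b_i\}$ that are free with respect to $\bullet_{\operatorname{F}}$ and whose marginal functionals are $f$ and $g$.

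For the right-hand side I would use that $(\mathfrak{G}^s(R),\boxtimes)$ is a group, so associativity lets me write $f\boxtimes\operatorname{Moeb}\boxtimes g=\mathcal{R}(f)\boxtimes g$ with $\mathcal{R}(f)=f\boxtimes\operatorname{Moeb}$ the free-cumulant series of $f$, cf.~(\ref{R-trafo}). Applying Definition~\ref{boxconv_df} and Definition~\ref{def:box-proj-op} then gives
$$
(\mathcal{R}(f)\boxtimes g)_w=\sum_{\pi\in\operatorname{NC}(m)}X_{w,\pi}(\mathcal{R}(f))\cdot X_{w,K(\pi)}(g)=\sum_{\pi\in\operatorname{NC}(m)}\Big(\prod_{V\in\pi}\kappa_{w|V}(f)\Big)\Big(\prod_{W\in K(\pi)}g_{w|W}\Big),
$$
where $\kappa_{w|V}(f)=(f\boxtimes\operatorname{Moeb})_{w|V}$ by~(\ref{comb_cumulants}). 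Hence the right-hand side is the sum over non-crossing partitions of the free cumulants of $f$ along the blocks of $\pi$ times the moments of $g$ along the blocks of the Kreweras complement $K(\pi)$.

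With both sides in this form the identity to prove is precisely
$$
(f\bullet_{\operatorname{F}}g)(x_{i_1}y_{i_1}\cdots x_{i_m}y_{i_m})=\sum_{\pi\in\operatorname{NC}(m)}\Big(\prod_{V\in\pi}\kappa_{w|V}(f)\Big)\Big(\prod_{W\in K(\pi)}g_{w|W}\Big),
$$
the Nica--Speicher product formula, cf.~\cite{NS}. I would invoke this result directly; alternatively it is established by induction on $m$, playing the recursive definition of $\bullet_{\operatorname{F}}$ from Definition~\ref{coord-prod} against the recursive structure of $\operatorname{NC}(m)$ and its Kreweras complement, with the explicit low-order identities such as~(\ref{3_free_prod}) serving as base cases and consistency checks.

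The main obstacle is exactly this combinatorial matching. The free universal product is defined by an alternating-sign recursion that successively contracts consecutive letters, whereas the right-hand side is organised by the interlocked pair $(\pi,K(\pi))$. The heart of the statement is to show that the inclusion--exclusion hidden in the $\bullet_{\operatorname{F}}$-recursion, after passing from the moments of $f$ to the free cumulants $\kappa(f)$ through the Moebius series, resolves into precisely the non-crossing sum with $\kappa(f)$ supported on $\pi$ and the moments of $g$ supported on $K(\pi)$; here the defining property $\operatorname{Zeta}\boxtimes\operatorname{Moeb}=\operatorname{Moeb}\boxtimes\operatorname{Zeta}=(\underline{1}_s,\underline{0})$ and the block-count relation $|\pi|+|K(\pi)|=m+1$ do the work. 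Once the alternating word is handled correctly, noting that $\Delta_m(w)$ starts with an $x$-letter so that $\pi$ carries the $f$-data and $K(\pi)$ the $g$-data, equality of the coefficients follows for every $w$, and therefore $f\boxtimes_V g=f\boxtimes\operatorname{Moeb}\boxtimes g$ on all of $\mathfrak{G}^s(R)$.
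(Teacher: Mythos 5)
Your proof is correct, but it cannot be matched line-by-line against the paper's, because the paper gives no proof of this proposition at all: the sentence immediately following it states that the formula ``is a re-derivation of [Proposition~5.6, \cite{FMcK4}], from the theory of co-groups and universal products'', i.e.\ the verification is delegated to the authors' earlier paper. What you wrote is, in effect, the argument that citation conceals. Your two reductions are exactly right: $\Delta_m(w)=x_{i_1}y_{i_1}\cdots x_{i_m}y_{i_m}$ gives $(f\boxtimes_V g)_w=(f\bullet_{\operatorname{F}}g)(x_{i_1}y_{i_1}\cdots x_{i_m}y_{i_m})$, while associativity of $\boxtimes$, the definition~(\ref{R-trafo}) of $\mathcal{R}$, and Definition~\ref{boxconv_df} turn the right-hand side into
$$
\sum_{\pi\in\operatorname{NC}(m)}\Bigl(\prod_{V\in\pi}\kappa_{w|V}(f)\Bigr)\Bigl(\prod_{W\in K(\pi)}g_{w|W}\Bigr),
$$
where, by~(\ref{comb_cumulants}), reading $X_{w,\pi}(\mathcal{R}(f))$ as a product of free cumulants is purely definitional in this paper. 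The remaining identity is precisely the Nica--Speicher formula for moments of alternating products of free random variables ([\cite{NS}, Lecture~14]), applied to the tautological realization $a_i:=\iota_1(x_i)$, $b_i:=\iota_2(x_i)$ in $k\langle x_1,\dots,x_s\rangle\amalg_1 k\langle x_1,\dots,x_s\rangle$ equipped with the functional $f\bullet_{\operatorname{F}}g$: the marginals are $f$ and $g$ by UP2, freeness holds by the very definition of $\bullet_{\operatorname{F}}$, and since that formula is a purely combinatorial polynomial identity it applies to formal series in $\mathfrak{G}^s(R)$, not only to distributions of operators. Your route buys a self-contained, checkable derivation that makes visible exactly where the Kreweras complement enters; the paper's route buys brevity and the emphasis that the only new content here is the co-group expression $\boxtimes_V=(-\bullet_{\operatorname{F}}-)\circ\Delta_m$ for a relation already established in \cite{FMcK4}. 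One caution: your fallback ``induction on $m$'' is only a sketch --- the inclusion--exclusion bookkeeping it gestures at is essentially the proof of the Nica--Speicher theorem itself --- so it is the citation of \cite{NS}, not the induction, that should be regarded as carrying the proof.
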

The above formula is a re-derivation of [Proposition~5.6, \cite{FMcK4}], from the theory of co-groups and universal products.  

\begin{thm}
$\mathcal{R}\big|_{\mathfrak{G}^s_+}(\mathfrak{G}^s_+(k),\boxtimes_V,\underline{1})\rightarrow (\mathfrak{G}^s_+(k),\boxtimes,(\underline{1}_s,\underline{0}))
$ is a group isomorphism.

The respective  BCH-group laws $\circ_{\boxplus_V}$ and $\circ_{\boxplus}$ are equal up to a linear co-ordinate transformation with a lower-triangular matrix with unit diagonal. Further, the identity
\begin{equation*}
\operatorname{EXP}_{\boxtimes}\circ d\mathcal{R}\big|_{\mathfrak{G}^s_+}\circ\operatorname{LOG}_{\boxtimes_V}=\mathcal{R}\big|_{\mathfrak{G}^s_+}.
\end{equation*}
holds.
\end{thm}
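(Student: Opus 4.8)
The plan is to derive everything from Proposition~\ref{box_V_rel_box}, which presents $\boxtimes_V$ as the $\boxtimes$-group twisted by the Moebius series, and then to invoke the functoriality of the $\operatorname{LOG}=\operatorname{BCH}\circ\log$ correspondence for pro-unipotent groups. First I would establish that $\mathcal{R}\big|_{\mathfrak{G}^s_+}$ is a group isomorphism. Writing $\mathcal{R}(f)=f\boxtimes\operatorname{Moeb}$, i.e. as right translation by $\operatorname{Moeb}$ in the $\boxtimes$-group, the homomorphism property is a one-line computation using $f\boxtimes_V g=f\boxtimes\operatorname{Moeb}\boxtimes g$:
$$
\mathcal{R}(f\boxtimes_V g)=f\boxtimes\operatorname{Moeb}\boxtimes g\boxtimes\operatorname{Moeb}=(f\boxtimes\operatorname{Moeb})\boxtimes(g\boxtimes\operatorname{Moeb})=\mathcal{R}(f)\boxtimes\mathcal{R}(g).
$$
Bijectivity is immediate, since right translation by $\operatorname{Moeb}$ is invertible with inverse right translation by $\operatorname{Zeta}$, on account of $\operatorname{Moeb}\boxtimes\operatorname{Zeta}=(\underline{1}_s,\underline{0})$; as $\operatorname{Moeb},\operatorname{Zeta}\in\mathfrak{G}^s_+$ and $\mathfrak{G}^s_+$ is a $\boxtimes$-subgroup, the map preserves $\mathfrak{G}^s_+$, and $\mathcal{R}(\operatorname{Zeta})=\operatorname{Zeta}\boxtimes\operatorname{Moeb}=(\underline{1}_s,\underline{0})$ sends the $\boxtimes_V$-unit $\underline{1}=\operatorname{Zeta}$ to the $\boxtimes$-unit.

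Next I would compute the differential $d\mathcal{R}$ at the identity and read off its shape directly from the boxed group law~(\ref{boxed_law}), whose variables are the coefficients $X_u$ with $|u|\geq2$. Differentiating $F_w(X,\operatorname{Moeb})$ at $X=\operatorname{Zeta}$ with respect to $X_u$, the explicit term $X_w$ contributes $\delta_{uw}$, while each product $X_{w,\pi}\cdot\operatorname{Moeb}_{w,K(\pi)}$ with $\pi\neq0_{|w|},1_{|w|}$ is linear in $X$ only through a single non-singleton block $V\in\pi$, whose sub-word $w|V$ is strictly shorter than $w$. Hence $\partial F_w/\partial X_u$ vanishes unless $|u|\leq|w|$ and equals $1$ when $u=w$. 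With respect to the grading by word length, equivalently the weight $\sigma_w=|w|-1$, this is precisely a lower-triangular matrix with unit diagonal, which is the second assertion.

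Finally, the relation between the BCH laws and the displayed identity both follow from the naturality of $\operatorname{LOG}$, as recorded in the commutative diagram of the preceding subsection. Since $\mathcal{R}\big|_{\mathfrak{G}^s_+}$ is a homomorphism of pro-unipotent groups, the Lie-algebra map it induces is its linear part $d\mathcal{R}\big|_{\mathfrak{G}^s_+}$, and the equivalence between simply connected nilpotent groups and their Lie algebras yields
$$
\mathcal{R}\big|_{\mathfrak{G}^s_+}=\operatorname{EXP}_{\boxtimes}\circ\, d\mathcal{R}\big|_{\mathfrak{G}^s_+}\circ\operatorname{LOG}_{\boxtimes_V}.
$$
Transporting the group law along the two $\operatorname{LOG}$ maps via this equality then identifies $\circ_{\boxplus_V}$ with the conjugate of $\circ_{\boxplus}$ by $d\mathcal{R}\big|_{\mathfrak{G}^s_+}$, i.e. equal up to that lower-triangular unit-diagonal change of coordinates. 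I expect the main obstacle to be this last step: one must justify the functoriality of $\exp/\log$ in the projective-limit setting, namely that a homomorphism of pro-unipotent groups is reconstructed from its differential. This is controlled by the grading, since in each fixed weight $\log$, $\exp$, $\operatorname{BCH}$ and the group laws are all polynomial and involve only finitely many coordinates, so the finite-dimensional nilpotent theory applies to every truncation and passes to the limit.
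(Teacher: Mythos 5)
Your proof is correct and follows exactly the route the paper intends: the paper states this theorem without an explicit proof, placing it right after Proposition~\ref{box_V_rel_box} and the remark on the differential of right-translation by $\operatorname{Moeb}$, and these are precisely the ingredients you assemble (the one-line homomorphism computation from $f\boxtimes_V g=f\boxtimes\operatorname{Moeb}\boxtimes g$, the triangularity of $d\mathcal{R}$ read off from the boxed group law, and $\exp/\log$ functoriality for graded pro-unipotent groups). The only blemish is in the wording of your differential computation: extracting the part of $F_w(X,\operatorname{Moeb})$ that is linear in $X$ amounts to differentiating at the $\boxtimes$-identity $X=0$ rather than at $X=\operatorname{Zeta}$ as you state, but since at either point the Jacobian is lower-triangular with unit diagonal (only the off-diagonal entries differ, by the same degree bookkeeping), nothing in your argument breaks.
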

The content of the above statements corresponds to the commutativity of the following diagram:
$$
\begin{xy}
  \xymatrix{
     {\begin{array}{c} (\mathfrak{G}^s_+(k),\boxtimes_V,\underline{1}) \\\text{moments}\end{array}}
\ar[d]_{\operatorname{LOG}_{\boxtimes_V}}\ar[rr]^{\mathcal{R}\big|_{\mathfrak{G}^s_+}}  &     &  {\begin{array}{c} (\mathfrak{G}^s_+(k),\boxtimes,(\underline{1}_s,\underline{0})) \\\text{free cumulants}\end{array}}
\ar[d]^{\operatorname{LOG}_{\boxtimes}}\\
{\begin{array}{c}(k^{\N},\circ_{\boxtimes_V},0) \\\text{$\boxtimes_V$-cumulants}\end{array}}\ar[rr]^{d\mathcal{R}\big|_{\mathfrak{G}^s_+}} &   &{\begin{array}{c}(k^{\N},\circ_{\boxtimes},0) \\\text{$\boxtimes$-cumulants}\\\text{of free cumulants}\end{array}}
 }
\end{xy}
$$
We see the simplification which occurs for $\boxtimes_V$ when considering the corresponding  formal group law, e.g.
\begin{equation*}
(f\circ_{\boxtimes_V} g)_{i_1i_2i_3}=f_{i_1i_2i_3}\otimes 1+1\otimes g_{i_1i_2i_3}+f_{i_1i_2}\otimes g_{i_2i_3}+f_{i_1i_3}\otimes g_{i_1i_2}+f_{i_2i_3}\otimes g_{i_1i_3}
\end{equation*}
which is homogeneous of degree $2$, if assigning degree $|w|-1$ to $X_w$, but not symmetric.
\begin{rem}
For $s=1$, the map $\operatorname{LOG}_{\boxtimes}$  does indeed linearise the problem as shown in~\cite{MN} and~\cite{FMcK2,FMcK4}.  But this is not true for $s\geq2$, as proved in~\cite{FMcK4}.

The notion of cumulants always depends on the group law chosen, and it is unique up to a linear transformation with a (lower-) triangular matrix with unit diagonal. 
\end{rem} 
The representation theory of the multiplicative convolution groups in free probability is that of pro-unipotent groups~\cite{FMcK3,FMcK4}. In the algebraic category it is given by upper-triangular matrices with unit diagonal and Borel matrices~\cite{FMcK3,FMcK4}. Alternative, more analytic approaches, cf.~e.g.~\cite{CG}, we shall develop elsewhere. 

We close by synthesising the relations free Lie algebras, the shuffle product and unipotent group schemes have, cf.~\cite{BLU,CG,FM,Mil_AG,P,Ste}. For fixed integers $m\geq2$ and $r\geq1$, let $\mathfrak{f}^{m,r}$ denote the {\bf free Lie algebra} generated by $m$ elements $x_1,\dots,x_m$ and {\bf nilpotent of step} $r$.

The {\bf shuffle Hopf algebra} $(k\langle x_1,\dots, x_n\rangle, \shuffle, \Delta_{\otimes})$, where $\shuffle$ denotes the shuffle product and $\Delta_{\otimes}$ the deconcatenation co-product, defines an universal unipotent group scheme. Namely, according to A. Pianzola's Theorem~\cite{P} every algebraic unipotent group is the quotient of it by a Hopf ideal for $n$ large enough. 

The following relations hold:
$$
\begin{xy}
  \xymatrix{
      \mathfrak{f}^{m,d}\big/{\text{ideal}} 
\ar[d]  &     &  (k\langle x_1,\dots, x_n\rangle, \shuffle, \Delta_{\otimes})\big/{\text{Hopf ideal}}
\ar[d]  \\
{\left\{\begin{array}{c}\text{finite dim. nilpotent} \\\text{Lie algebras}\end{array}\right\}}\ar[rr]^{\text{equivalence}}_{\text{of categories}}   &   &{\left\{\begin{array}{c}\text{unipotent} \\\text{algebraic groups}\end{array}\right\}}
  }
\end{xy}
$$
\subsection*{Acknowledgements}
The first author is grateful to the MPI in Bonn for its support and hospitality. He  thanks Zhihua Chang for the helpful correspondence and Franz Lehner for a discussion. He thanks Dan-Virgil Voiculescu for his interest and several invitations to stimulating workshops. Finally, he thanks Roland Speicher for his interest, the numerous discussions on free probability and his hospitality on several occasions.
Both authors thank Arturo Pianzola for the email exchange.

Authors addresses:\\
Roland Friedrich, 53115 Bonn, Germany\\
rolandf@mathematik.hu-berlin.de\\
John McKay, Dept. Mathematics, Concordia University,\\
Montreal, Canada H3G 1M8\\ mac@mathstat.concordia.ca

\end{document}